\newtheorem{theorem}{Theorem}
\newtheorem{lemma}{Lemma}
\title{Stable and Discriminative Topological Graph Analysis}
\author{Padraig Corcoran \\ School of Computer Science \& Informatics, Cardiff University}
\begin{document}
\maketitle

\begin{abstract}
We propose a novel method for topological analysis of unweighted graphs which is based on \textit{persistent homology}. The proposed method maps the input graph to a complete weighted graph where the weighting function maps each edge to a value indicating the degree to which it belongs to a clique. The persistent homology of this weighted graph is subsequently computed to give a topological representation describing the topological features of the input graph plus their significance.

A formal and experimental analysis of the proposed and existing methods for topological graph analysis is presented. Through this analysis, we demonstrate that the proposed method possesses the properties of being stable and performing accurate discrimination such that it can make accurate inferences regarding the topological features of a given graph. On the other hand, we find that the existing methods considered do not possess these properties making it difficult for them to make such inferences. These findings are experimentally demonstrated using a number of random and real world graphs.
\end{abstract}

\section{Introduction}
A graph or network is a fundamental abstract data type consisting of a set of vertices and a set of edges where an edge corresponds to a pair of vertices. Much of the data in our world is naturally modelled as a graph. For example, a transportation network is commonly modelled as a graph where locations are modelled as vertices and transportation links are modelled as edges \cite{gagarin2018multiple}. Similarly, a social network is commonly modelled as a graph where individuals are modelled as vertices and social relationships are modelled as edges \cite{carstens2013persistent}. Finally, the brain is commonly modelled as a graph where neurons or brain region are modelled as vertices and connections are modelled as edges \cite{giusti2016two, chung2019exact}. Given data modelling as a graph, in many cases one wishes to perform an analysis of this graph to infer useful information. For example, given a transportation network modelled as a graph one may wish to perform analysis to infer the process governing its growth \cite{corcoran2013analysing}. Similarly, given a social network modelled as a graph one may wish to perform analysis to infer social roles.

There exist a number of categories of methods for performing analysis of graphs. Spectral graph analysis methods perform analysis by considering matrix representations such as the adjacency matrix or Laplacian matrix \cite{chung1997spectral}. Statistical graph analysis methods perform analysis by considering statistical representations such as the degree distribution \cite{kolaczyk2009statistical}. Machine learning graph analysis methods perform analysis by learning useful low dimensional vector space representations \cite{kipf2016variational}. Finally, topological graph analysis methods perform analysis by constructing a multi-scale representation of the graph known as a \textit{filtration} and computing the \textit{persistent homology} of this representation \cite{giusti2016two}. Persistent homology is a topological representation which encodes information concerning the existence and scale of connected components and holes of various dimensions \cite{zomorodian2005computing}. In this work we focus on topological graph analysis methods and specifically the case where the graphs in question are undirected and unweighted. 

%Existing methods for performing topological graph analysis contain two steps. In the first step a multi-scale representation of the graph known as a \textit{filtration} is constructed. In the second step the \textit{persistent homology} of this filtration is computed to give a set of \textit{persistence diagrams} describing the topological features of the graph \cite{zomorodian2005computing}. Each persistence diagram has a specified dimension where the $n$-dimensional persistence diagram encodes the existence and persistence or scale of $n$-dimensional holes. Note that, a $0$-dimensional hole corresponds to a connected component.
% This includes computing a filtration of the data kernel density estimation and computing a filtration of a distance-to-a-measure \cite{chazal2017robust}. However performing topological graph analysis in a manner which is stable is less well studied.

It is important that any method for performing topological analysis of a given data type exhibit the properties of being stable and performing accurate discrimination. Being stable concerns ensuring that perturbations to the input data do not significantly change the corresponding topological representation. Performing accurate discrimination concerns ensuring that data with similar topological characteristics have similar topological representations and data with distinct topological characteristics have distinct topological representations. For the case where the data type is sets of points in Euclidean space, many methods have been proposed which exhibit both these properties \cite{bubenik2015statistical, chevyrev2018persistence}. However, the development of topological graph analysis methods which exhibit both these properties is less well studied.

The two most commonly used topological graph analysis methods involve constructing a \textit{clique complex filtration} and a \textit{power complex filtration} which are two distinct types of graph filtration. However these methods do not exhibit the properties of being stable and performing accurate discrimination. To demonstrate this consider the three graphs in Figures \ref{fig:stable_discrimination_a}, \ref{fig:stable_discrimination_b} and \ref{fig:stable_discrimination_c} which contain a single clique, two cliques and two cliques connected by a single edge respectively. Note that, the latter two graphs differ only by a single edge. It is evident that the first graph is topologically distinct from the latter two graphs. Furthermore, the latter two graphs are topologically similar. However, the two topological graph analysis methods mentioned above will determine the graphs in Figures \ref{fig:stable_discrimination_a} and \ref{fig:stable_discrimination_c} to be topologically similar and topologically distinct from the graph in Figure \ref{fig:stable_discrimination_b}. This is a consequence of the fact that these methods interpret the graphs in Figures \ref{fig:stable_discrimination_a} and \ref{fig:stable_discrimination_c} as containing one large scale connected component and interpret the graph in Figure \ref{fig:stable_discrimination_b} as containing two large scale connected components. This demonstrates that these methods do not perform accurate discrimination. Furthermore, since the graphs in Figures \ref{fig:stable_discrimination_b} and \ref{fig:stable_discrimination_c} differ only by a single edge but are determined to be topologically distinct this demonstrates that the methods do not exhibit the property of being stable. A formal proof of these facts will be presented later in this article.

\begin{figure}
\begin{center}
\subfigure[]{\includegraphics[height=1.5cm]{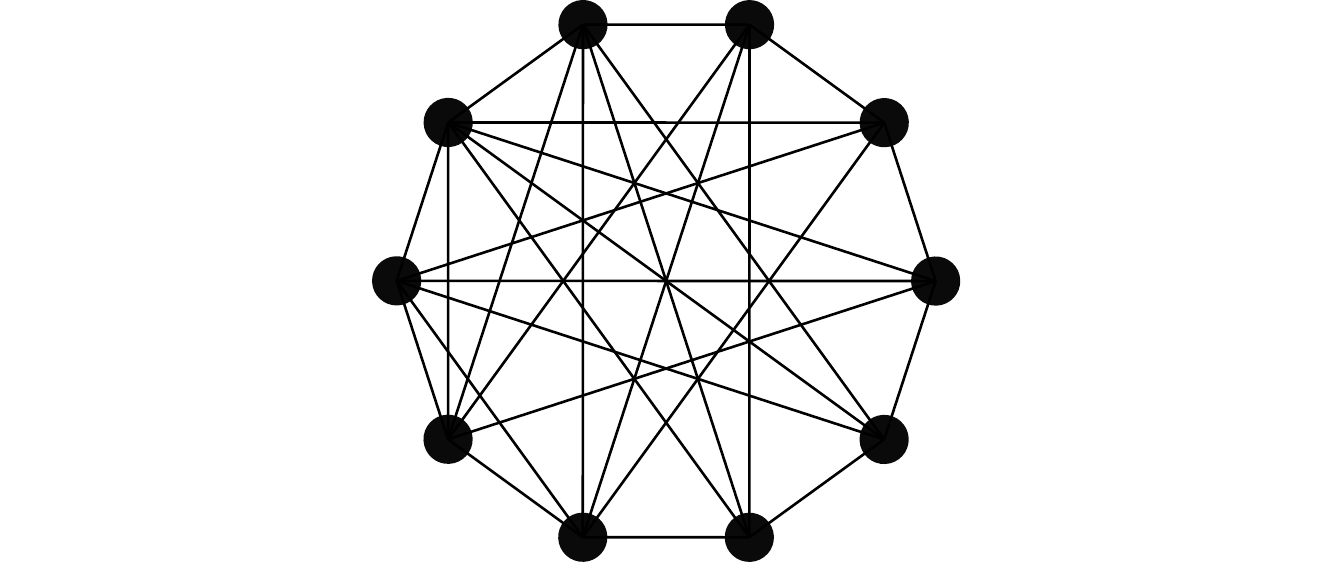}
\label{fig:stable_discrimination_a}}
\hspace{.1cm}
\subfigure[]{\includegraphics[height=1.5cm]{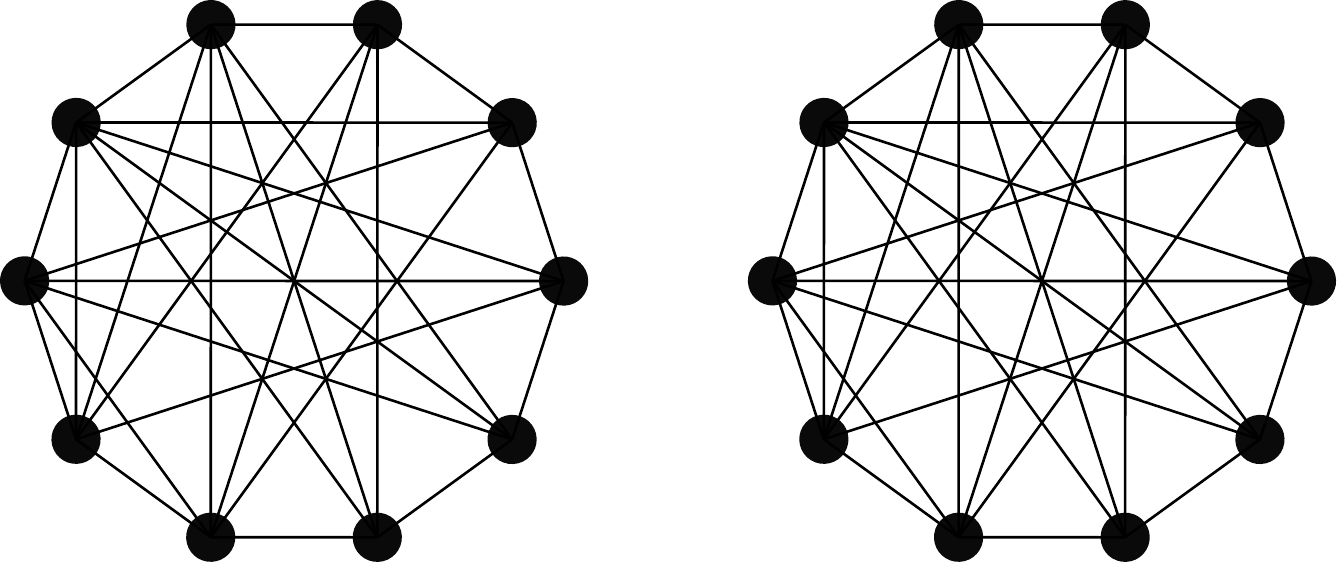}
\label{fig:stable_discrimination_b}}
\hspace{.1cm}
\subfigure[]{\includegraphics[height=1.5cm]{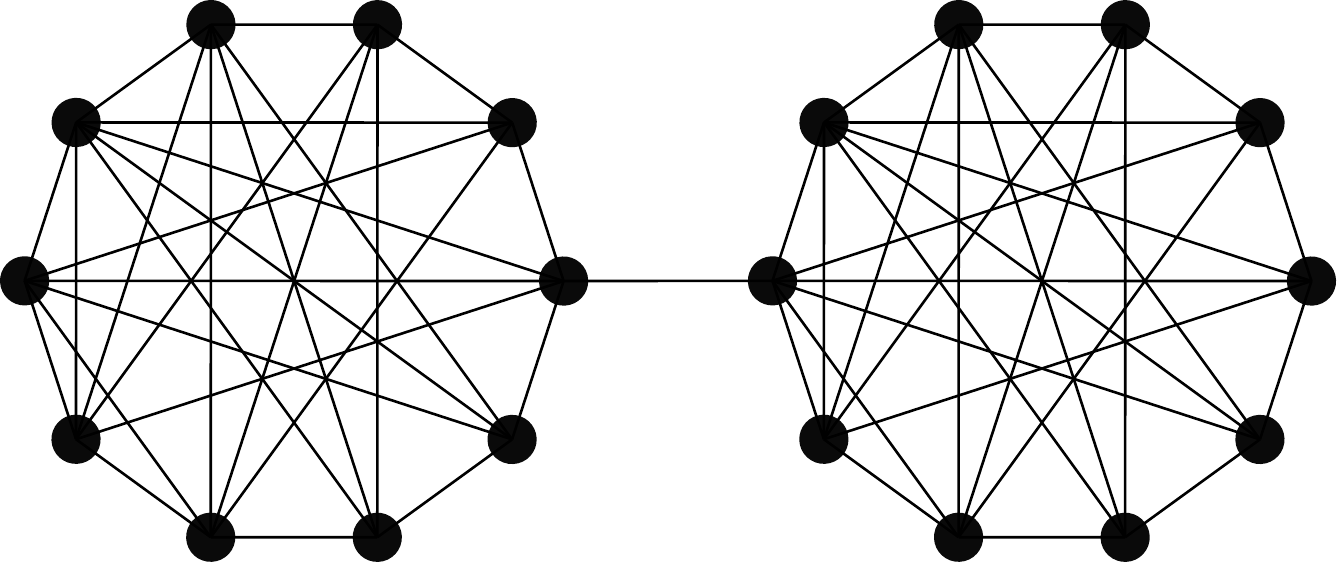}
\label{fig:stable_discrimination_c}}
\caption{The three graphs in (a), (b) and (c) contain a single clique, two cliques and two cliques connected by a single edge respectively. Note that, the latter two graphs differ only by a single edge.}
\label{fig:stable_discrimination}
\end{center}
\end{figure}

In this work we propose a novel method for performing graph topological analysis which exhibits the properties of being stable and performing accurate discrimination. This is achieved by constructing a novel filtration known as a \textit{cliqueness complex filtration} and subsequently computing the persistent homology of this filtration. This filtration considers the degree to which each current and potential edge in a given graph belongs to a clique. To demonstrate this consider again the three graphs in Figure \ref{fig:stable_discrimination}. The proposed topological graph analysis method will determine the graphs in Figures \ref{fig:stable_discrimination_b} and \ref{fig:stable_discrimination_c} to be topologically similar and topologically distinct from the graph in Figure \ref{fig:stable_discrimination_a}.  Furthermore, since the graphs in Figures \ref{fig:stable_discrimination_b} and \ref{fig:stable_discrimination_c} differ only by a single edge and are determined to be topologically similar this demonstrates that the method exhibits the property of being stable. Again, a formal proof of these facts will be presented later in this article.

The remainder of this paper is structured as follows. Section \ref{sec:ph_graphs} reviews background material on topological graph analysis. Section \ref{sec:stable_graph_analysis} presents the proposed method for topological graph analysis. Section \ref{sec:stability} presents a formal stability analysis of the proposed and existing methods for topological graph analysis. Section \ref{sec:applications} presents an experimental evaluation of these methods with respect to the task of performing topological graph analysis. Finally, section \ref{sec:conclusions} draws some conclusions from this work.

\section{Background and Related Works}
\label{sec:ph_graphs}
In this section we briefly present background material on graph theory and topology. For a reader new to these topics more detailed introductions can be found in \cite{giblin2013graphs} and \cite{edelsbrunner2010computational} respectively. We subsequently review related works on topological graph analysis.

An unweighted graph is a tuple $(V,E)$ where $V$ is a set of vertices and $E \subseteq (V \times V)$ is a set of edges. If the tuples in $E$ are unordered the graph in question is called undirected otherwise it is called directed. In this work we assume all graphs are undirected. A weighted graph is a tuple $(V,E,f)$ where $V$ is a set of vertices, $E \subseteq (V \times V)$ is a set of edges and $f:E \rightarrow \mathbb{R}$ is a weighting function. There also exists a class of weighted graphs where the domain of the weighting function is the set of graph vertices. However, in this work we assume the domain is the set of graph edges. A clique is a subset of vertices of a graph such that every two distinct vertices in the clique are adjacent. The $k$-th power of a graph $G$ is denoted $G^k$ and equals a graph with the same set of vertices as $G$ and an edge between two vertices if and only if there exists a path of length at most $k$ between them. In the case of an unweighted graph the length of a path equals the number of edges in that path. The diameter of a graph $G$ is the greatest distance between any pair of vertices in $G$.
% The degree of a graph vertex is the number of edges that are incident to the vertex. The degree of a vertex $v$ is denoted $\text{deg}(v)$. The maximum degree of a graph $G$ is denoted $\Delta(G)$ and equals the maximum degree of its vertices.

An (abstract) simplicial complex is a higher dimensional generalization of a graph. Formally, a simplicial complex $K$ is a finite collection of sets such that for each $\sigma \in K$ all subsets of $\sigma$ are also contained in $K$. Each element $\sigma \in K$ is called a $p$-simplex where $p = \left\vert{\sigma}\right\vert - 1$ is the corresponding dimension of the simplex. The faces of a simplex $\sigma$ correspond to all simplices $\tau$ where $\tau \subset \sigma$. The dimension of a simplicial complex $K$ is the largest dimension of any simplex $\sigma \in K$. The $n$-skeleton of a simplicial complex $K$ is the simplicial complex containing the union of the simplices of $K$ of dimensions less than or equal to $n$. Given an unweighted and undirected graph $G$ there exist a number of methods for constructing a corresponding simplicial complex representation \cite{aktas2019persistence}. The \textit{clique complex} of $G$ is a simplicial complex where each clique of size $p$ in $G$ corresponds to a $p-1$-simplex. The power complex of $G$ is the clique complex of $G^p$ for a specified $p$ \cite{parks2016persistent}.

Given a simplicial complex $K$, the formal sum $c$ defined by Equation \ref{eq:p_chain} is called a $p$-chain where each $\sigma_i \in K$ is a $p$-simplex and each $\lambda_i$ is an element of a given field.

\begin{equation}
\label{eq:p_chain}
c = \sum \lambda_i \sigma_i
\end{equation}

The vector space of all $p$-chains is denoted $C_p(K)$. The boundary map $\partial_p : C_p(K) \rightarrow C_{p-1}(K)$ is the homomorphism defined by Equation \ref{eq:boundary_operator} where $\hat{v_i}$ indicates the deletion of $v_i$ from the sequence. The boundary of a $p$-simplex $\sigma=[v_0, \dots, v_{p}]$ is equal to the sum of its $(p-1)$-dimensional faces.

\begin{equation}
\label{eq:boundary_operator}
\partial_p \sigma = \sum_{i=0}^{p} (-1)^i [v_1, \dots, \hat{v_i}, \dots, v_p]
\end{equation}

The kernel of $\partial_p$ is called the vector space of $p$-cycles and is denoted $Z_p(K)$. The image of $\partial_p$ is called the vector space of $p$-boundaries and is denoted $B_p(K)$. The fact that $\partial_{p+1}\partial_{p}=0$ implies that $B_p(K) \subseteq Z_k(K)$. The quotient space $H_p(K) = Z_p(K) / B_p(K)$ is called the $p$-homology group of $K$. Intuitively an element of the $p$-homology group corresponds to a $p$-dimensional hole in $K$. That is, an element of the $0$-homology group corresponds to a connected component in $K$ while an element of the $1$-homology group corresponds to a one dimensional hole in $K$. The rank of $H_p(K)$ is called the $p$-th Betti number.

A filtration of a simplicial complex $K$ is a sequence of simplicial complexes $\left( K_0, K_1 \dots, K_{m} \right)$ which satisfy the conditions in Equation \ref{eq:filtration}. Given a filtration of a simplicial complex $K$, for every $i \leq j$ there exists an inclusion map from $K_i$ to $K_j$ and in turn an induced homomorphism from $H_p(K_i)$ to $H_p(K_j)$ for each dimension $p$. An element of the $p$-homology group is \textit{born} at $K_{i+1}$ if it exists in $H_p(K_{i+1})$ but does not exist in $H_p(K_{i})$. An element of the $p$-homology group \textit{dies} at $K_{i+1}$ if it exists in $H_p(K_i)$ but does not exist in $H_p(K_{i+1})$. If an element of a $p$-homology group never dies, its death is determined to be at a hypothetical simplicial complex $K_{\infty}$.

\begin{equation}
\label{eq:filtration}
\emptyset = K_0 \subset K_1 \subset \dots \subset K_{m} = K
\end{equation}

An element of the $p$-homology group which is born at $K_i$ and dies at $K_j$ can be represented as a point $(i,j)$ in the space $\lbrace (i,j) \in \mathbb{R}^2, i \leq j \rbrace$ with corresponding \textit{persistence} of $j-i$. For a given filtration $\left( K_0, K_1 \dots, K_{m} \right)$ of a simplicial complex $K$, the multiset of points corresponding to a $p$-homology group is called a $p$-dimensional \textit{persistence diagram} \cite{zomorodian2005computing}. Let Pers$_p$ denote the space of $p$-dimensional persistence diagrams. The bottleneck distance $B: \text{Pers$_p$} \times \text{Pers$_p$} \rightarrow \mathbb{R}$ is defined as follows. First, to each persistence diagram we add infinitely many points with equal coordinates where each has infinite multiplicity. The bottleneck distance is then defined by Equation \ref{eq:bottleneck_distance} where the map $\eta$ is a bijection and $\| . \|_{\infty}$ is the $L_{\infty}$-norm \cite{edelsbrunner2010computational}.

\begin{equation}
\label{eq:bottleneck_distance}
B(X,Y) = \inf_{\eta: X \rightarrow Y} \sup_{x \in X} \| x - \eta(x) \|_{\infty}
\end{equation}

The set of persistence diagrams corresponding to given filtration encodes topological information about that filtration. Therefore in order for persistent homology to extract useful topological information from a given graph it is necessary to first construct a useful filtration of that graph. There exists a number of methods for constructing a filtration for a weighted graph. A review of these methods can be found in the excellent review article by Aktas et al. \cite{aktas2019persistence}. On the other hand, constructing a filtration for an unweighted graph is less well studied. We now briefly review existing methods for performing this task.

The \textit{clique complex filtration} for an unweighted graph $G$ is defined in Equation \ref{eq:clique_filtration} where $K_i$ denotes the $i$-skeleton of the clique complex $K$ of $G$ and $m$ is the dimension of $K$ \cite{horak2009persistent}. The \textit{power complex filtration} for an unweighted graph $G$ is defined in Equation \ref{eq:power_filtration} where $K_i$ denotes the clique complex of the graph $G^i$ and $d$ is the diameter of $G$. Li et al. \cite{li2020hybrid} refer to this filtration as the \textit{network filtration}. Note that, the power complex filtration of a graph is equivalent to the \textit{Vietoris-Rips filtration} of that graph where the distance between two vertices equals the length of the shortest path between the vertices in question \cite{bauer2019ripser}. This is a very useful equivalence which allows the power complex filtration to be computed using the very efficient Ripser software \cite{bauer2019ripser}. Suh et al. \cite{suh2019persistent} defined a filtration for unweighted graphs which first maps the graph in question to an edge weighted graph and subsequently constructs a filtration for this graph. The edge weights in question are computed using the Jaccard index with respect to the neighbourhoods of edge vertices. Kannan et al. \cite{kannan2019persistent} defined a filtration for unweighted graphs similar to that proposed by Suh et al. \cite{suh2019persistent} but uses a mapping to a vertex weighted graph. The vertex weights in question are computed using vertex degree.

In the introduction to this paper we provided an illustrative example of the fact that the above methods for performing topological graph analysis which involve constructing a clique or power complex filtration do not exhibit the properties of being stable and performing accurate discrimination. In section \ref{sec:stability} of this paper we formally prove this fact.

\begin{equation}
\label{eq:clique_filtration}
\emptyset \subset K_0 \subset \dots \subset K_{m}
\end{equation}

\begin{equation}
\label{eq:power_filtration}
\emptyset \subset K_1 \subset \dots \subset K_{d}
\end{equation}

% Weight Rank Clique Filtration - Cliques and cavities in the human connectome

\section{Topological Graph Analysis}
\label{sec:stable_graph_analysis}
This section describes the proposed topological graph analysis method. The method in question is described in subsection \ref{sec:stable_graph_analysis:method}. Implementation details of the method are subsequently described in subsection \ref{sec:stable_graph_analysis:implementation}.

\subsection{Proposed Method}
\label{sec:stable_graph_analysis:method}
The proposed topological graph analysis method contains three steps. In the first step the unweighted input graph is mapped to an edge weighted graph. In the second step a filtration of this graph is constructed. In the final step the persistent homology of this filtration is computed. We now describe each of these steps in detail.

Let $\mathcal{G}^U$ and $\mathcal{G}^W$ denote the spaces of unweighted and weighted graphs respectively. Let $G = (V, E) \in \mathcal{G}^U$ denote the input graph to which we wish to apply topological graph analysis. In the first step of the proposed method we compute a map $W: \mathcal{G}^U \rightarrow \mathcal{G}^W$ of the input graph $G \in \mathcal{G}^U_n$ to a new weighted graph $G' = (V', E', C) \in \mathcal{G}^W$ as follows. We define the vertex and edge sets of $G'$ using Equation \ref{eq:weighted_complete_graph}. We denote the bijection between vertex sets in this equation as $\eta: V \rightarrow V'$. Note that, $(V', E')$ is a complete graph.

\begin{equation}
\label{eq:weighted_complete_graph}
\begin{split}
V' & = V \\
E' & = \lbrace (v_i, v_j) \mid \forall \,\, v_i, v_j \in V' \rbrace
\end{split}
\end{equation}

Let $\text{Set}$ denote the space of subsets of $V$. We define the map $N: V \rightarrow \text{Set}$ to be the map from a vertex in the graph $G$ to its closed neighbourhood. The closed neighbourhood of a vertex $v$ is the union of $v$ and the set of all vertices adjacent to $v$. Given this we define the edge weighting function $C: E' \rightarrow [0, 1]$ using Equation \ref{eq:edge_cliqueness}. The function $C$ maps an edge to the \textit{Jaccard index} of the closed neighbourhoods of its vertices. It measures the degree to which the corresponding vertices in the unweighted graph $G$ have a common neighbourhood with greater common neighbourhoods mapping to higher values. Since all vertices belonging to a given clique will have the same closed neighbourhood, it can also be interpreted as the degree to which the edge in question belongs to a clique.

% \times \min(\text{deg}(v_1), \text{deg}(v_2))
\begin{equation}
\label{eq:edge_cliqueness}
C( (v_1, v_2) ) = \frac{\vert N(\eta(v_1)) \cap N(\eta(v_2)) \vert}{\vert N(\eta(v_1)) \cup N(\eta(v_2)) \vert}
\end{equation}

To illustrate the proposed mapping $W: \mathcal{G}^U \rightarrow \mathcal{G}^W$ consider the graph $G \in \mathcal{G}^U$ displayed in Figure \ref{fig:cliqueness_a}. The corresponding graph $G'=W(G) \in \mathcal{G}^W$ is displayed in Figure \ref{fig:cliqueness_b} where corresponding edge weighting function values are displayed on each edge. The graph $G'$ is a complete graph however only those edges which have a non-zero edge weighting function value are displayed in this figure. From this figure we see that inter clique edges are assigned higher values than intra clique edges. This captures the fact that these edges are more significant.

\begin{figure}
\begin{center}
\subfigure[]{\includegraphics[width=5.5cm]{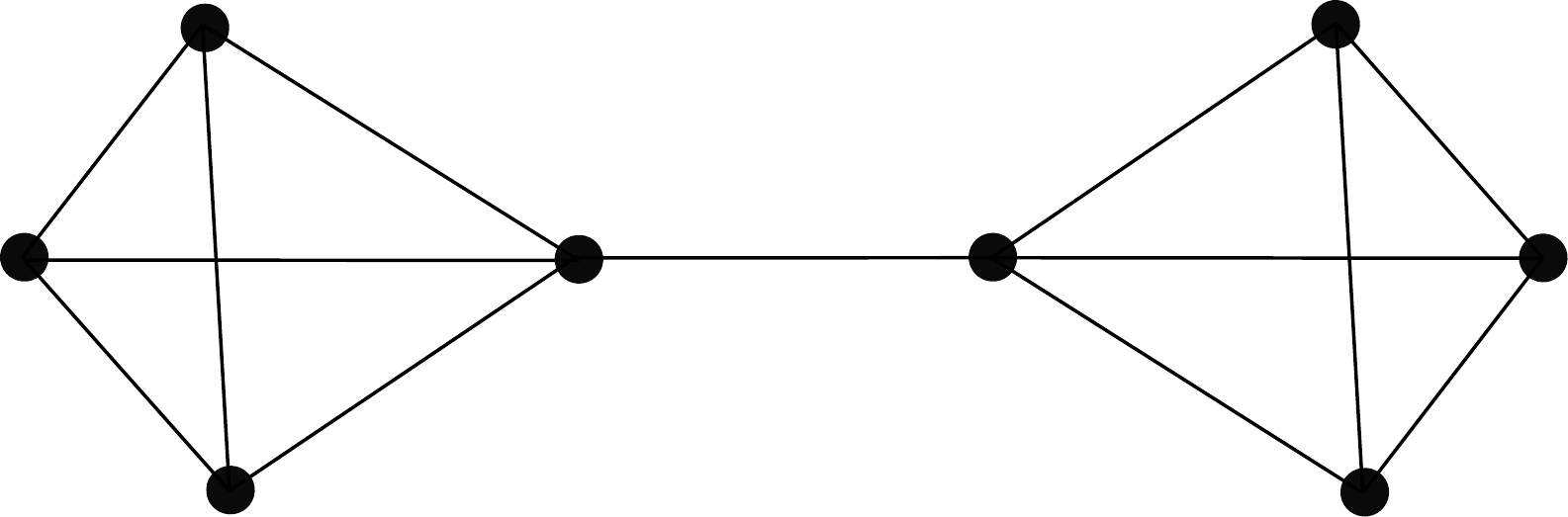}
\label{fig:cliqueness_a}}
\hspace{0.1cm}
\subfigure[]{\includegraphics[width=5.5cm]{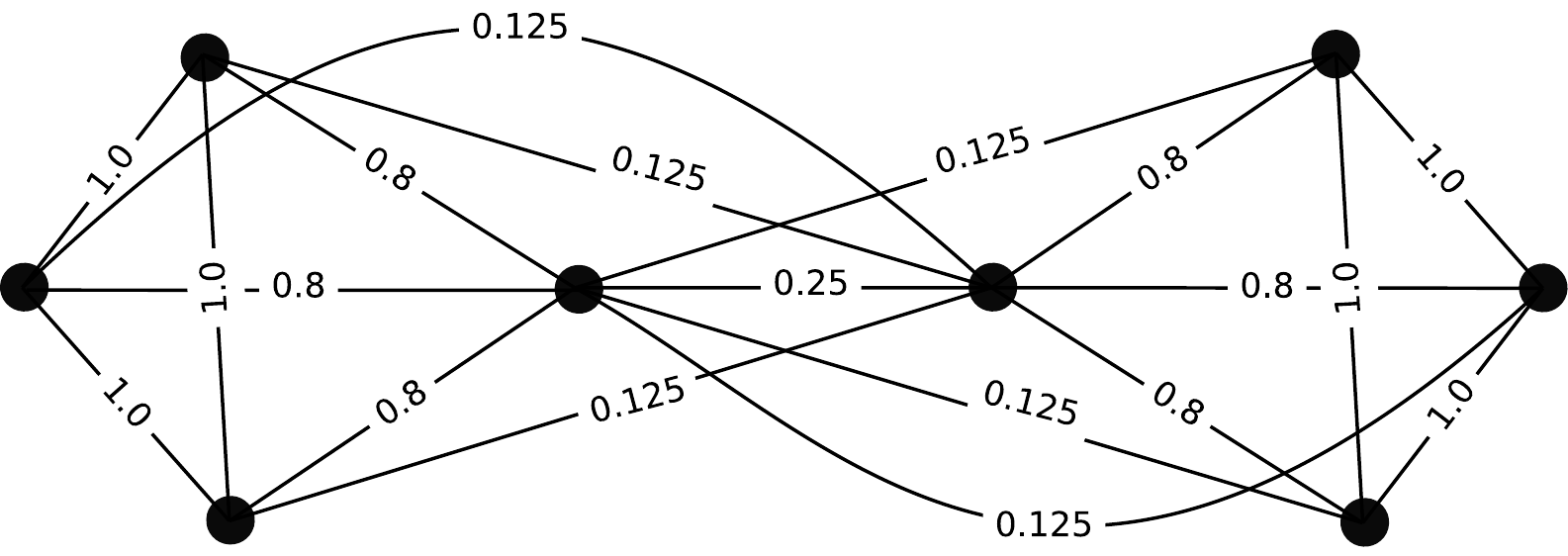}
\label{fig:cliqueness_b}}
\caption{An unweighted graph is displayed in (a) where vertices and edges are represented by dots and line segments respectively. The corresponding weighted graph computed using the proposed mapping $W$ is displayed in (b). Note that, only these edges which have a non zero edge weight value are displayed in (b).}
\label{fig:graph_cliqueness}
\end{center}
\end{figure}

In the second step of the proposed method a filtration on the weighted graph $G'=(V', E', C)$ is computed as follows. First we compute the clique complex of the graph $(V', E')$ which we denote $K$. Let $b:K \rightarrow E'$ denote the bijective map from the set of 1-simplices of $K$ to $E'$. Given this we define a function $f: K \rightarrow [0,1]$ using Equation \ref{eq:clique_filtration_f}. This function is monotonic; that is, $f(\sigma) \leq f(\tau)$ wherever $\sigma$ is a face of $\tau$. Broadly speaking, it assigns to each simplex the minimum of the $C$ function values on the set of edges in the corresponding clique.

% C(b(\sigma)) & \vert \sigma \vert = 1 \\
\begin{equation}
\label{eq:clique_filtration_f}
f(\sigma) = \begin{cases}
\min(C(b(\gamma))) & \vert \sigma \vert \geq 1, \vert \gamma \vert = 1, \gamma \in \sigma \\
\max(C(b(\gamma))) & \vert \sigma \vert = 0, \vert \gamma \vert = 1, \sigma \in \gamma \\
0 & \text{Otherwise} 
\end{cases}
\end{equation}

We define a binary relation $\leq$ on the simplices of $K$ using Equation \ref{eq:total_order_relation} where the function $l: K \rightarrow \mathbb{Z}^{\geq}$ (non-negative integers) maps simplices to the random order they are stored in memory. This function is used to assign a random order to those simplices with equal $f$ function values and equal dimension.

\begin{equation}
\label{eq:total_order_relation}
\begin{split}
\leq = \lbrace (\sigma, \gamma) \mid & (f(\sigma) > f(\gamma)) \\
 & \lor (f(\sigma) = f(\gamma) \land \sigma \subset \gamma) \\
 & \lor (f(\sigma) = f(\gamma) \land \vert \sigma \vert = \vert \gamma \vert \land l(\sigma) < l(\gamma)) \rbrace
\end{split}
\end{equation}

We use this binary relation to define a total order $\sigma_1 \leq \sigma_2 \leq \dots \leq \sigma_{m}$ on the set of $m$ simplices in $K$. This total order is in turn used to define a filtration $\left( K_0, K_1 \dots, K_{m} \right)$ where $K_0 = \emptyset$ and $K_i = K_{i-1} \cup \sigma_{i}$ for $i=1 \dots m$. In the final of the three steps in the proposed method, the persistent homology of this filtration is computed using the method of \cite{zomorodian2005computing}. We represent all persistent diagrams results from this method in terms of the values of the function $f$ defined in Equation \ref{eq:clique_filtration_f}. That is, for each $(i,j)$ in a given persistence diagram we apply the map defined in Equation \ref{eq:pd_map} where the codomain of this map is the space $\lbrace a \times b \: \vert \: a \in [1, -\infty], b \in [1, -\infty], a \geq b \rbrace$.

\begin{equation}
\label{eq:pd_map}
(i,j) \mapsto (f(K_i), f(K_j)) 
\end{equation}

To illustrate the proposed method for topological graph analysis consider the graphs in Figures \ref{fig:stable_discrimination_a_PD}, \ref{fig:stable_discrimination_b_PD} and \ref{fig:stable_discrimination_c_PD} which we considered in the introduction section of this article. Recall that the latter two graphs are topologically similar to each other and dissimilar to the first graph. The $0$- and $1$-dimensional persistent diagrams corresponding to these graphs and computing using the proposed method are displayed in Figures \ref{fig:stable_discrimination_a_cliqueness_PD}, \ref{fig:stable_discrimination_b_cliqueness_PD} and \ref{fig:stable_discrimination_c_cliqueness_PD} respectively. For each graph the corresponding $0$-dimensional persistent diagram contains a point of infinite persistence plus zero, one and one points of finite persistence respectively. The additional point of finite persistence in the latter two diagrams indicates that the graphs in question contain an additional clique. In this case the $0$-dimensional persistent diagrams corresponding to the graphs in Figures \ref{fig:stable_discrimination_b_PD} and \ref{fig:stable_discrimination_c_PD} are similar to each other but dissimilar to the $0$-dimensional persistent diagram corresponding to the graph in Figure \ref{fig:stable_discrimination_a_PD}. Therefore, in the case of these three graphs, the proposed method performs accurate discrimination.

The $0$- and $1$-dimensional persistent diagrams corresponding to the graphs in Figures \ref{fig:stable_discrimination_a_PD}, \ref{fig:stable_discrimination_b_PD} and \ref{fig:stable_discrimination_c_PD} and computing using the method which computes a \textit{clique complex filtration} are displayed in Figures \ref{fig:stable_discrimination_a_clique_PD}, \ref{fig:stable_discrimination_b_clique_PD} and \ref{fig:stable_discrimination_c_clique_PD} respectively. For each graph the corresponding $0$-dimensional persistent diagram contains a point of finite persistence for each individual graph vertex plus one, two and one points of infinite persistence respectively. In this case the $0$-dimensional persistent diagrams corresponding to the graphs in Figures \ref{fig:stable_discrimination_a_PD} and \ref{fig:stable_discrimination_c_PD} are similar to each other but dissimilar to the $0$-dimensional persistent diagram corresponding to the graph in Figure \ref{fig:stable_discrimination_b_PD}. Therefore, in the case of these three graphs, the method in question does not perform accurate discrimination.

The $0$- and $1$-dimensional persistent diagrams corresponding to the graphs in Figures \ref{fig:stable_discrimination_a_PD}, \ref{fig:stable_discrimination_b_PD} and \ref{fig:stable_discrimination_c_PD} and computing using the method which computes a \textit{power complex filtration} are displayed in Figures \ref{fig:stable_discrimination_a_rips_PD}, \ref{fig:stable_discrimination_b_rips_PD} and \ref{fig:stable_discrimination_c_rips_PD} respectively. Similar to the previous discussion, for each graph the corresponding $0$-dimensional persistent diagram contains a point of finite persistence for each individual graph vertex plus one, two and one points of infinite persistence respectively. Therefore, in the case of these three graphs, the method in question does not perform accurate discrimination.

% g = graph_cliqueness([list(range(10)) for i in range(10)])
% g = graph_cliqueness([list(range(10)) for i in range(10)] + [list(range(10,20)) for i in range(10)])
% adj = [list(range(10)) for i in range(10)] + [list(range(10,20)) for i in range(10)]
% adj[0].append(10)
% adj[10].append(0)
% g = graph_cliqueness(adj)
\begin{figure}
\begin{center}
\subfigure[]{\raisebox{8mm}{\includegraphics[width=2.5cm]{images/stable_discrimination_a}}
\label{fig:stable_discrimination_a_PD}}
\hspace{0.1cm}
\subfigure[]{\includegraphics[width=2.5cm]{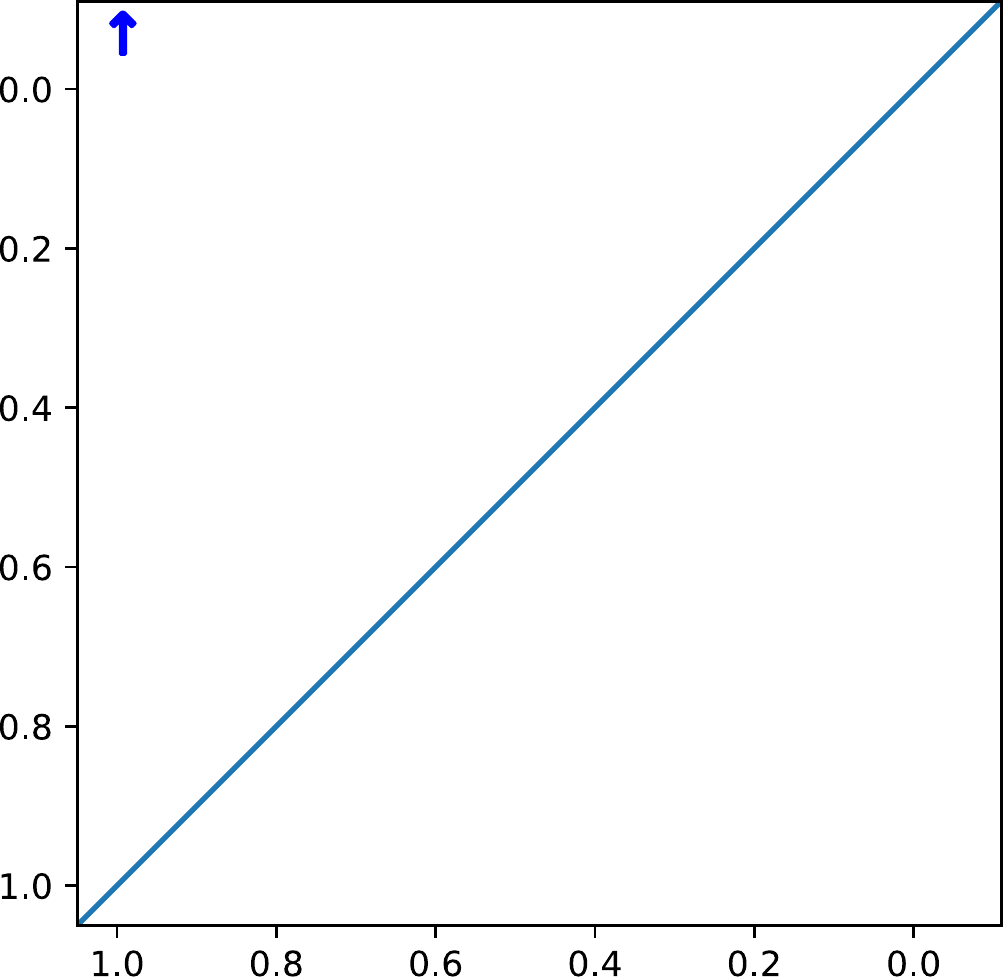}
\label{fig:stable_discrimination_a_cliqueness_PD}}
\hspace{0.1cm}
\subfigure[]{\includegraphics[width=2.5cm]{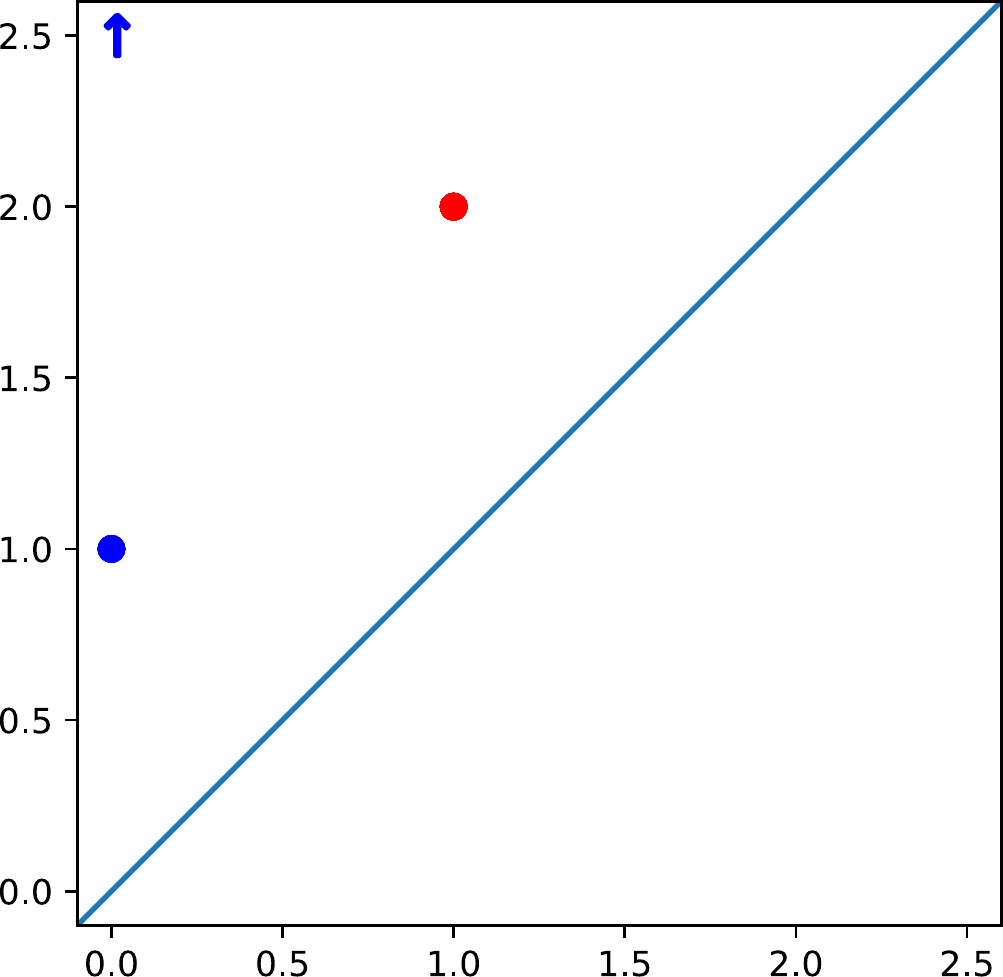}
\label{fig:stable_discrimination_a_clique_PD}}
\hspace{0.1cm}
\subfigure[]{\includegraphics[width=2.5cm]{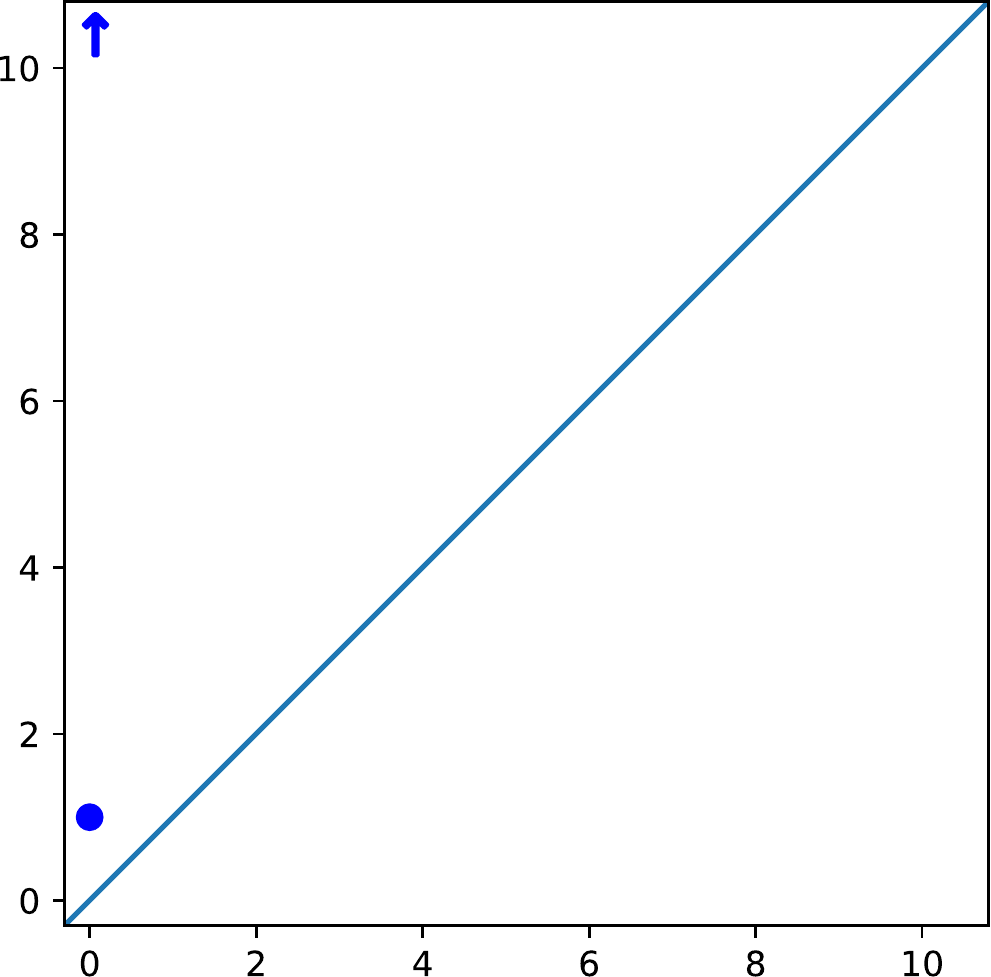}
\label{fig:stable_discrimination_a_rips_PD}}
\\
\subfigure[]{\raisebox{8mm}{\includegraphics[width=2.5cm]{images/stable_discrimination_b}}
\label{fig:stable_discrimination_b_PD}}
\hspace{0.1cm}
\subfigure[]{\includegraphics[width=2.5cm]{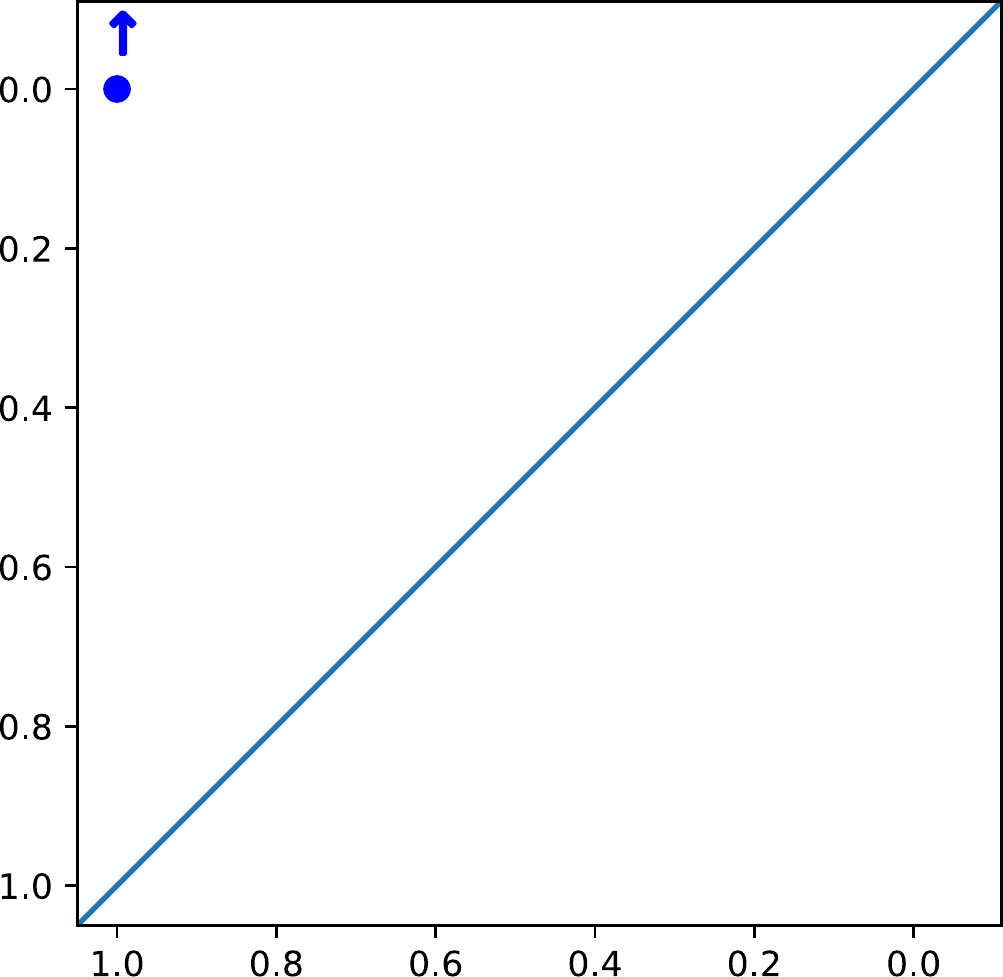}
\label{fig:stable_discrimination_b_cliqueness_PD}}
\hspace{0.1cm}
\subfigure[]{\includegraphics[width=2.5cm]{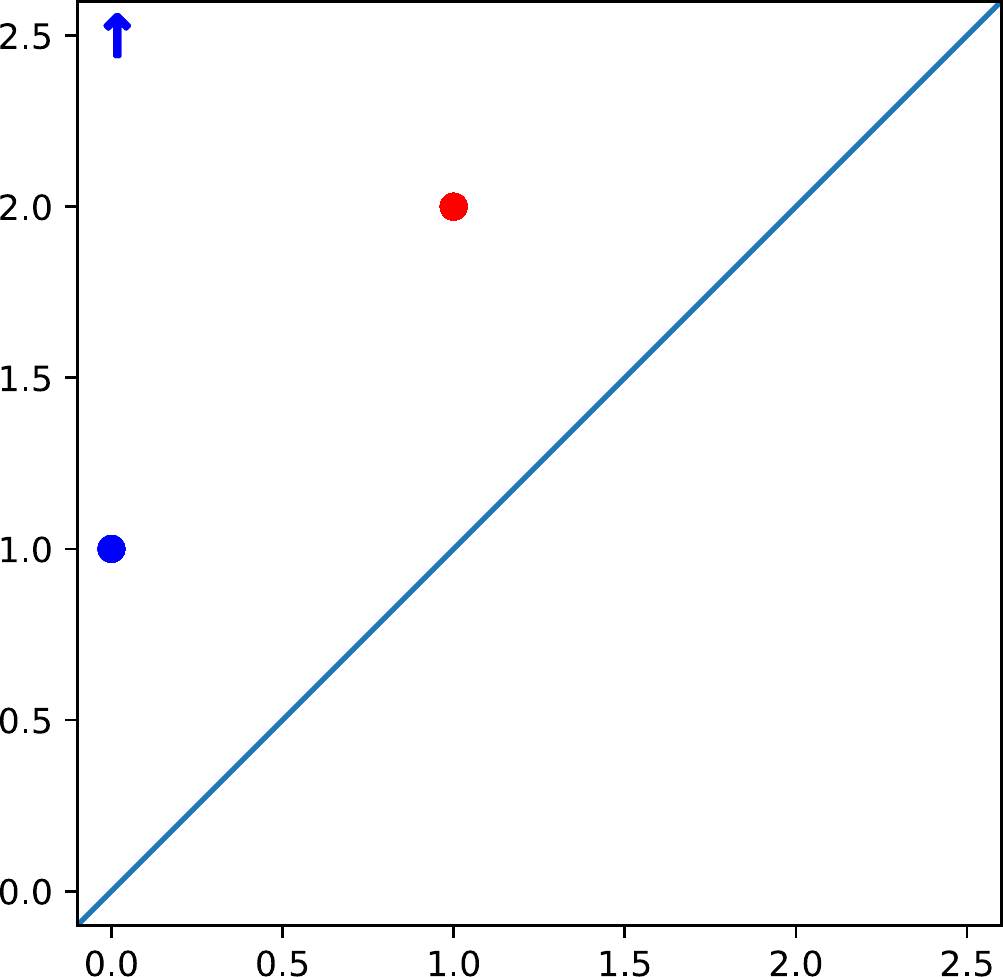}
\label{fig:stable_discrimination_b_clique_PD}}
\hspace{0.1cm}
\subfigure[]{\includegraphics[width=2.5cm]{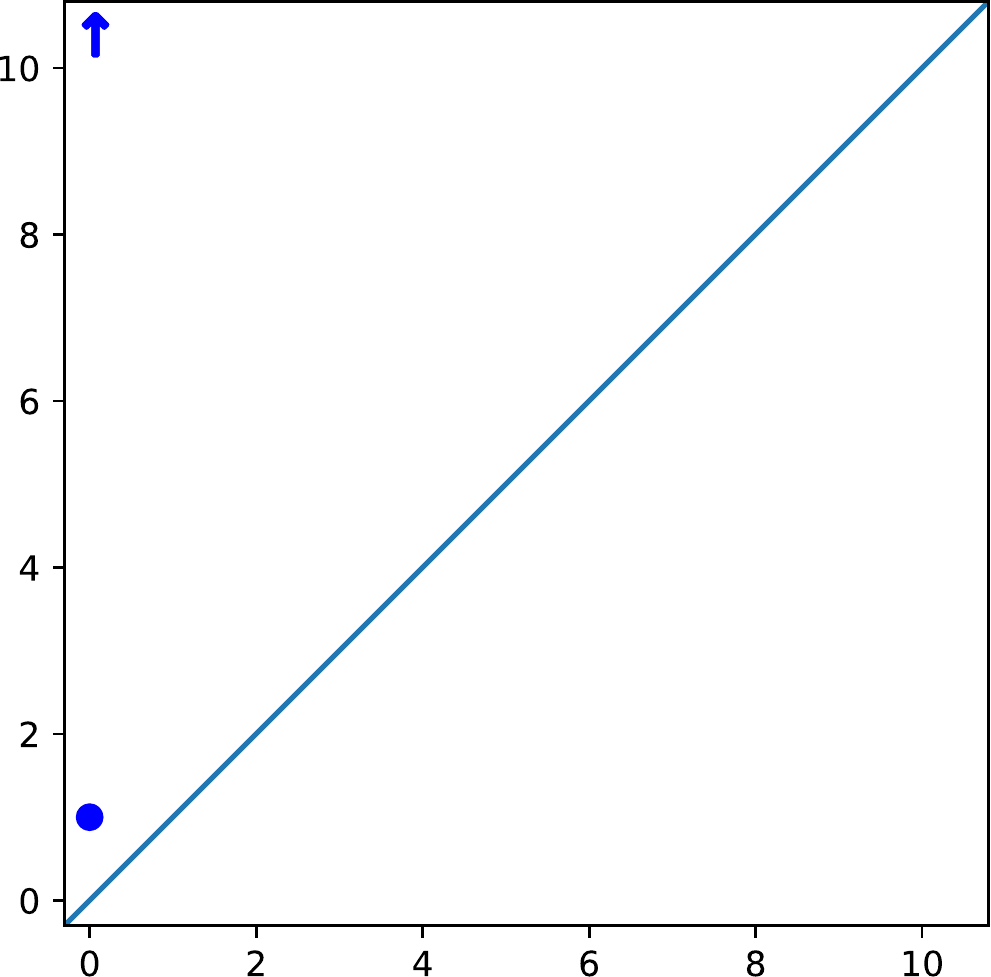}
\label{fig:stable_discrimination_b_rips_PD}}
\\
\subfigure[]{\raisebox{8mm}{\includegraphics[width=2.5cm]{images/stable_discrimination_c}}
\label{fig:stable_discrimination_c_PD}}
\hspace{0.1cm}
\subfigure[]{\includegraphics[width=2.5cm]{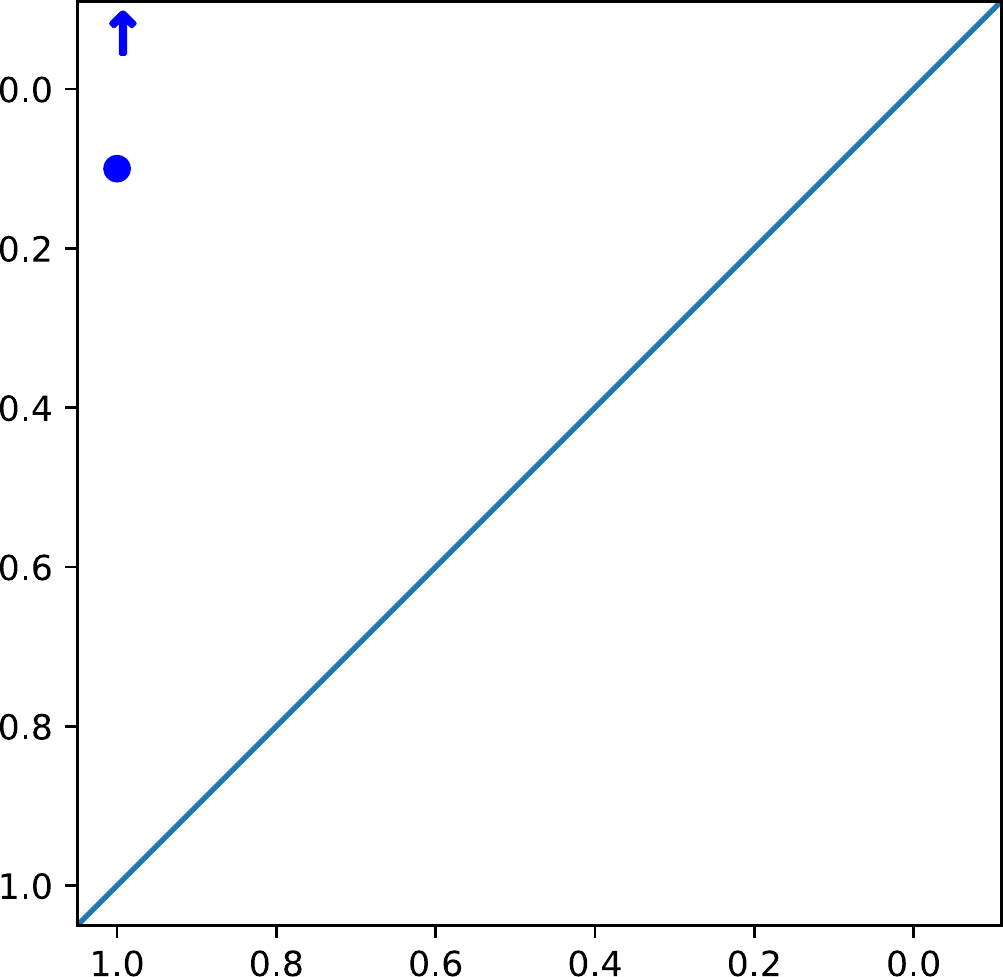}
\label{fig:stable_discrimination_c_cliqueness_PD}}
\hspace{0.1cm}
\subfigure[]{\includegraphics[width=2.5cm]{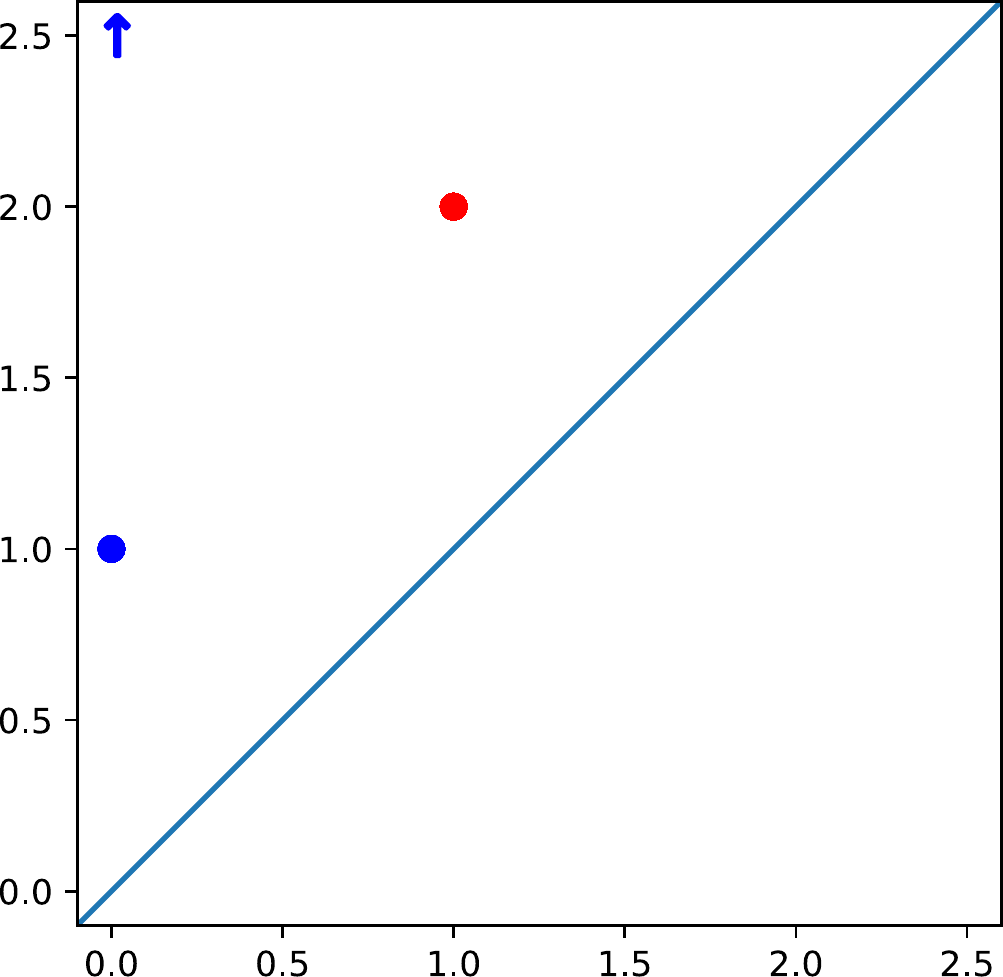}
\label{fig:stable_discrimination_c_clique_PD}}
\hspace{0.1cm}
\subfigure[]{\includegraphics[width=2.5cm]{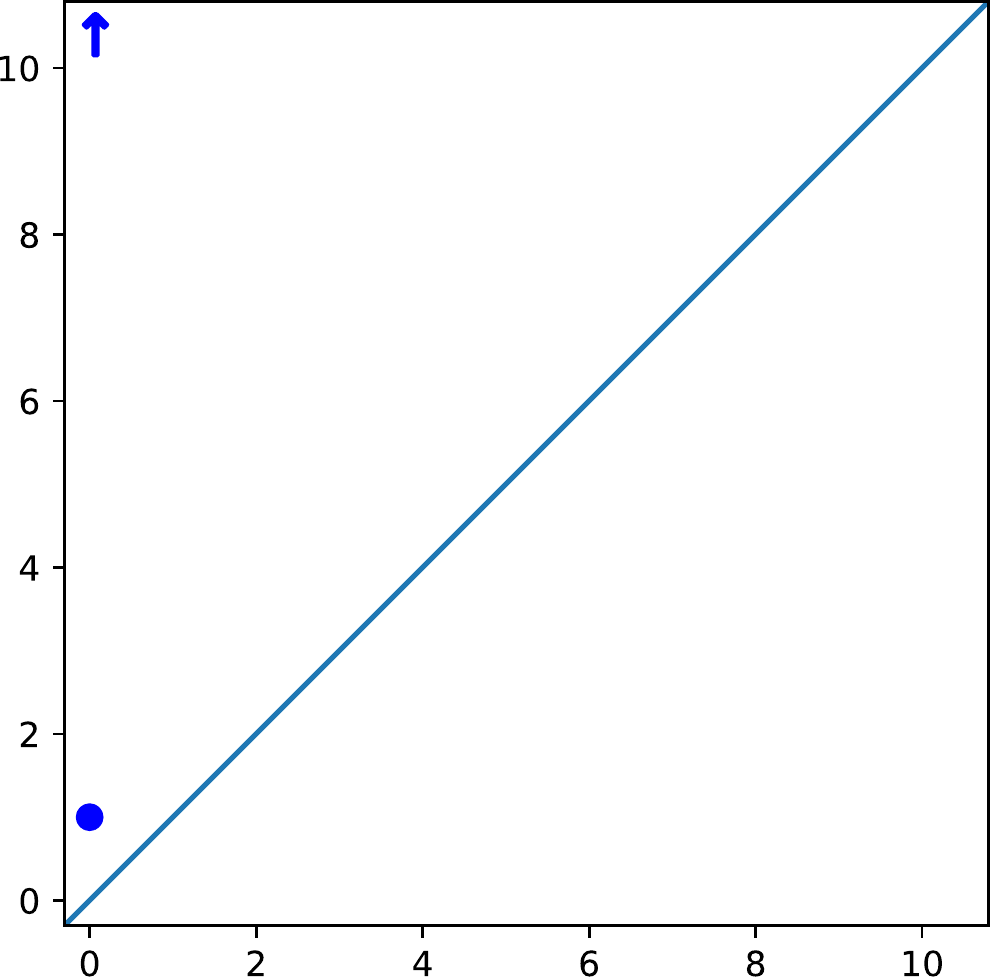}
\label{fig:stable_discrimination_c_rips_PD}}
\caption{The graphs in (a), (b) and (c) contain a single clique, two cliques and two cliques connected by a single edge respectively. The $0$- and $1$-dimensional persistent diagrams computed using the proposed method are displayed in (b), (f) and (j) respectively. The $0$- and $1$-dimensional persistent diagrams computed using a \textit{clique complex filtration} are displayed in (c), (g) and (k) respectively. The $0$- and $1$-dimensional persistent diagrams computed using a \textit{power complex filtration} are displayed in (d), (h) and (l) respectively. In all figures points belonging to $0$- and $1$-dimensional persistent diagrams are represented by blue and red dots respectively. Points which do not die and have infinite persistence are represented by arrows where the death value is replaced with an lower/upper bound.}
\label{fig:graph_loop}
\end{center}
\end{figure}

% For each graph the corresponding $0$-dimensional persistent diagrams contain a single point of infinite persistence and multiple points of finite persistence where many of these points have equal coordinates.
As a second example consider the graphs displayed in Figures \ref{fig:graph_loop_1} and \ref{fig:graph_loop_2}. The former graph contains a densely connected cycle while the latter contains the same cycle with an additional single edge across the cycle. Therefore the $1$-dimensional hole introduced by this additional edge is topologically less significant than that corresponding to the densely connected cycle. The $0$- and $1$-dimensional persistent diagrams corresponding to these graphs and computing using the proposed method are displayed in Figures \ref{fig:graph_loop_1_cliqueness_PD} and \ref{fig:graph_loop_2_cliqueness_PD} respectively. For each graph the corresponding $1$-dimensional persistent diagrams contain one and two points of finite persistence respectively. This accurately models the fact that the graphs in question contain one and two $1$-dimensional holes respectively. The persistence of one of the two points in Figure \ref{fig:graph_loop_2_cliqueness_PD} is significantly greater than the other. This accurately models the fact that the $1$-dimensional hole in question is more significant.

The $0$- and $1$-dimensional persistent diagrams corresponding to the graphs in Figures \ref{fig:graph_loop_1} and \ref{fig:graph_loop_2} and computing using the method which computes a \textit{clique complex filtration} are displayed in Figures \ref{fig:graph_loop_1_clique_PD} and \ref{fig:graph_loop_2_clique_PD} respectively. For each graph the corresponding $1$-dimensional persistent diagrams contain one and two points of infinite persistence respectively. This accurately models the fact that the graphs in question contain one and two $1$-dimensional holes respectively. However it does not accurately model the fact that one $1$-dimensional hole is more significant than the other.

The $0$- and $1$-dimensional persistent diagrams corresponding to the graphs in Figures \ref{fig:graph_loop_1} and \ref{fig:graph_loop_2} and computing using the method which computes a \textit{power complex filtration} are displayed in Figures \ref{fig:graph_loop_1_rips_PD} and \ref{fig:graph_loop_2_rips_PD} respectively. For each graph the corresponding $1$-dimensional persistent diagrams contain one and two points of finite persistence respectively. This accurately models the fact that the graphs in question contain one and two $1$-dimensional holes respectively. The two points in the latter persistent diagram have equal birth and death values and in turn equal persistence. This does not accurately model the fact that one $1$-dimensional hole is more significant than the other.

%g = Graph([[0,1,2,3,22,23],[1,0,2,3,22,23],[2,3,0,1,4,5],[3,2,0,1,4,5],[4,5,2,3,6,7],[5,4,2,3,6,7],[6,7,4,5,8,9],[7,6,4,5,8,9],[8,9,6,7,10,11],[9,8,6,7,10,11],[10,11,8,9,12,13],[11,10,8,9,12,13],[12,13,10,11,14,15],[13,12,10,11,14,15],[14,15,12,13,16,17],[15,14,12,13,16,17],[16,17,14,15,18,19],[17,16,14,15,18,19],[18,19,16,17,20,21],[19,18,16,17,20,21],[20,21,18,19,22,23],[20,21,18,19,22,23],[22,23,20,21,0,1],[23,22,20,21,0,1]])
%g = Graph([[0,1,2,3,22,23,12],[1,0,2,3,22,23],[2,3,0,1,4,5],[3,2,0,1,4,5],[4,5,2,3,6,7],[5,4,2,3,6,7],[6,7,4,5,8,9],[7,6,4,5,8,9],[8,9,6,7,10,11],[9,8,6,7,10,11],[10,11,8,9,12,13],[11,10,8,9,12,13],[12,13,10,11,14,15,0],[13,12,10,11,14,15],[14,15,12,13,16,17],[15,14,12,13,16,17],[16,17,14,15,18,19],[17,16,14,15,18,19],[18,19,16,17,20,21],[19,18,16,17,20,21],[20,21,18,19,22,23],[20,21,18,19,22,23],[22,23,20,21,0,1],[23,22,20,21,0,1]])
\begin{figure}
\begin{center}
\subfigure[]{\includegraphics[width=2.5cm]{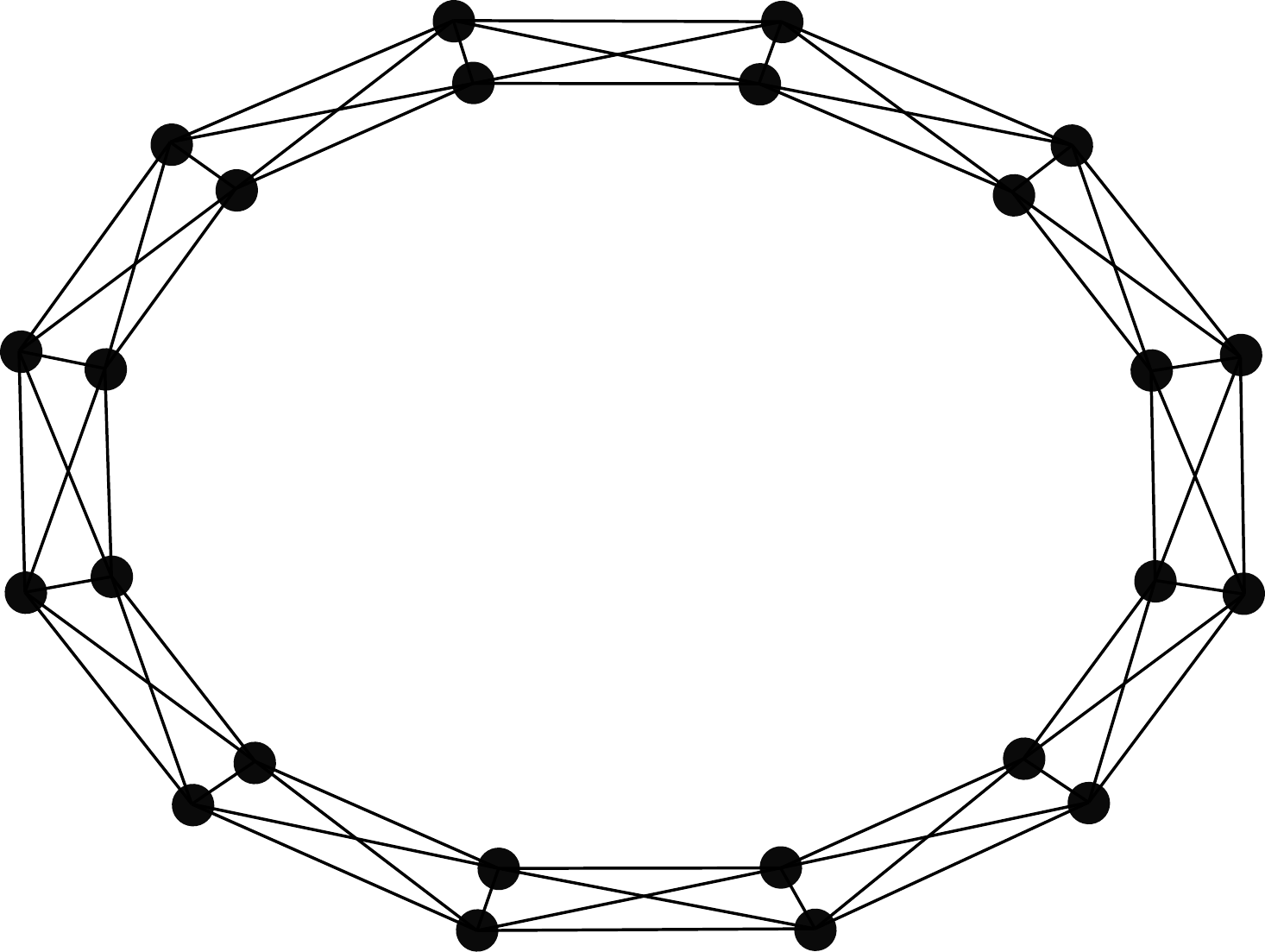}
\label{fig:graph_loop_1}}
\hspace{0.1cm}
\subfigure[]{\includegraphics[width=2.5cm]{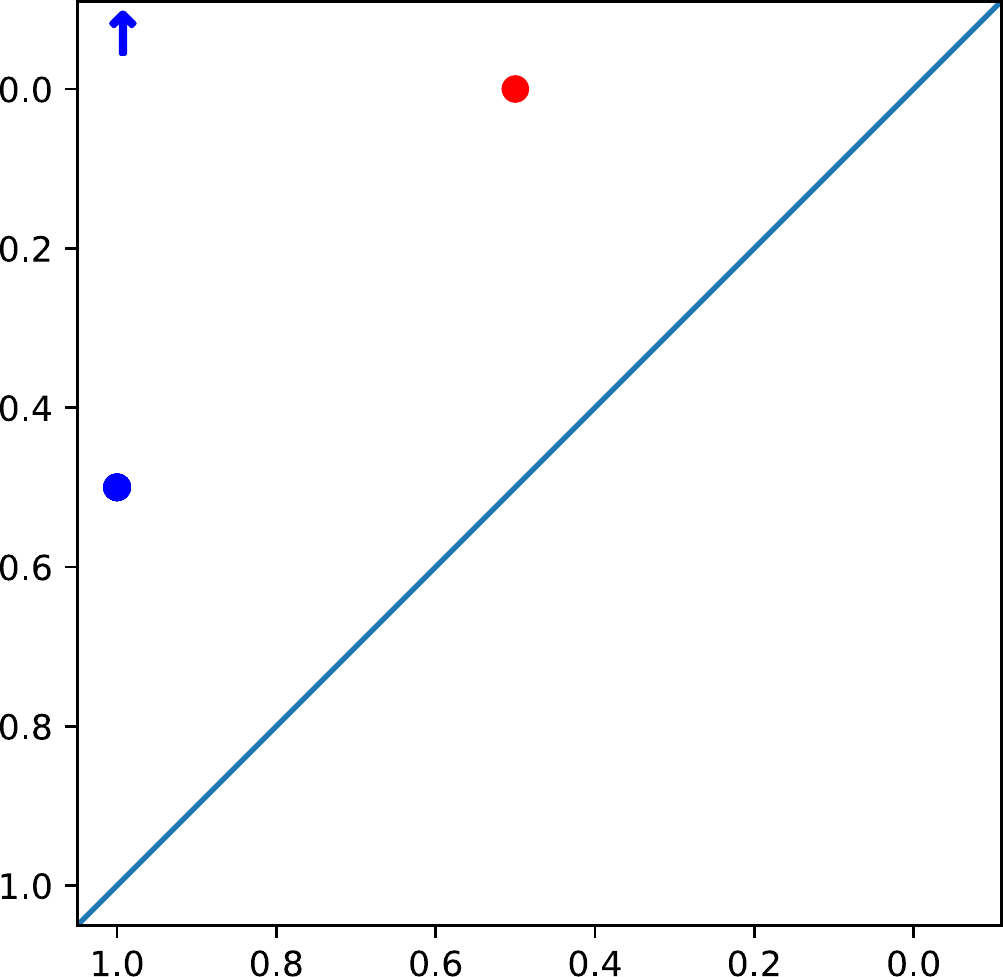}
\label{fig:graph_loop_1_cliqueness_PD}}
\hspace{0.1cm}
\subfigure[]{\includegraphics[width=2.5cm]{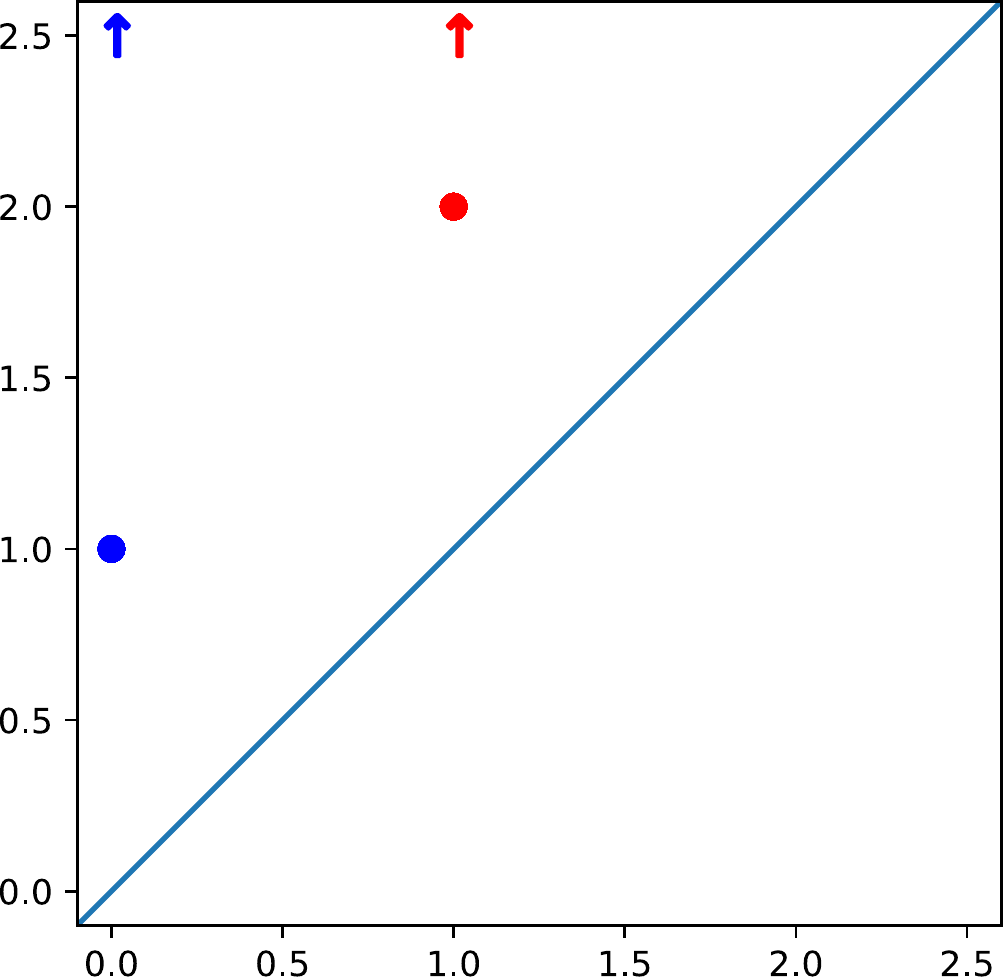}
\label{fig:graph_loop_1_clique_PD}}
\hspace{0.1cm}
\subfigure[]{\includegraphics[width=2.5cm]{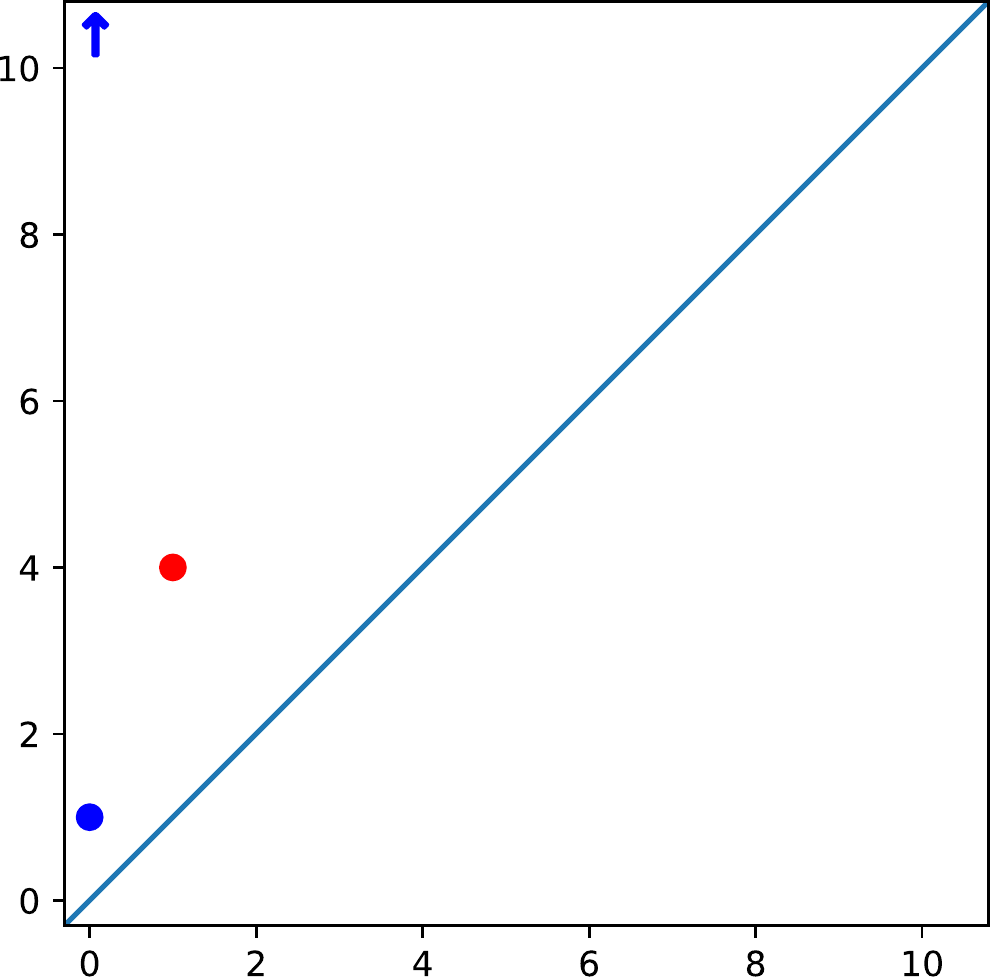}
\label{fig:graph_loop_1_rips_PD}}
\\
\subfigure[]{\includegraphics[width=2.5cm]{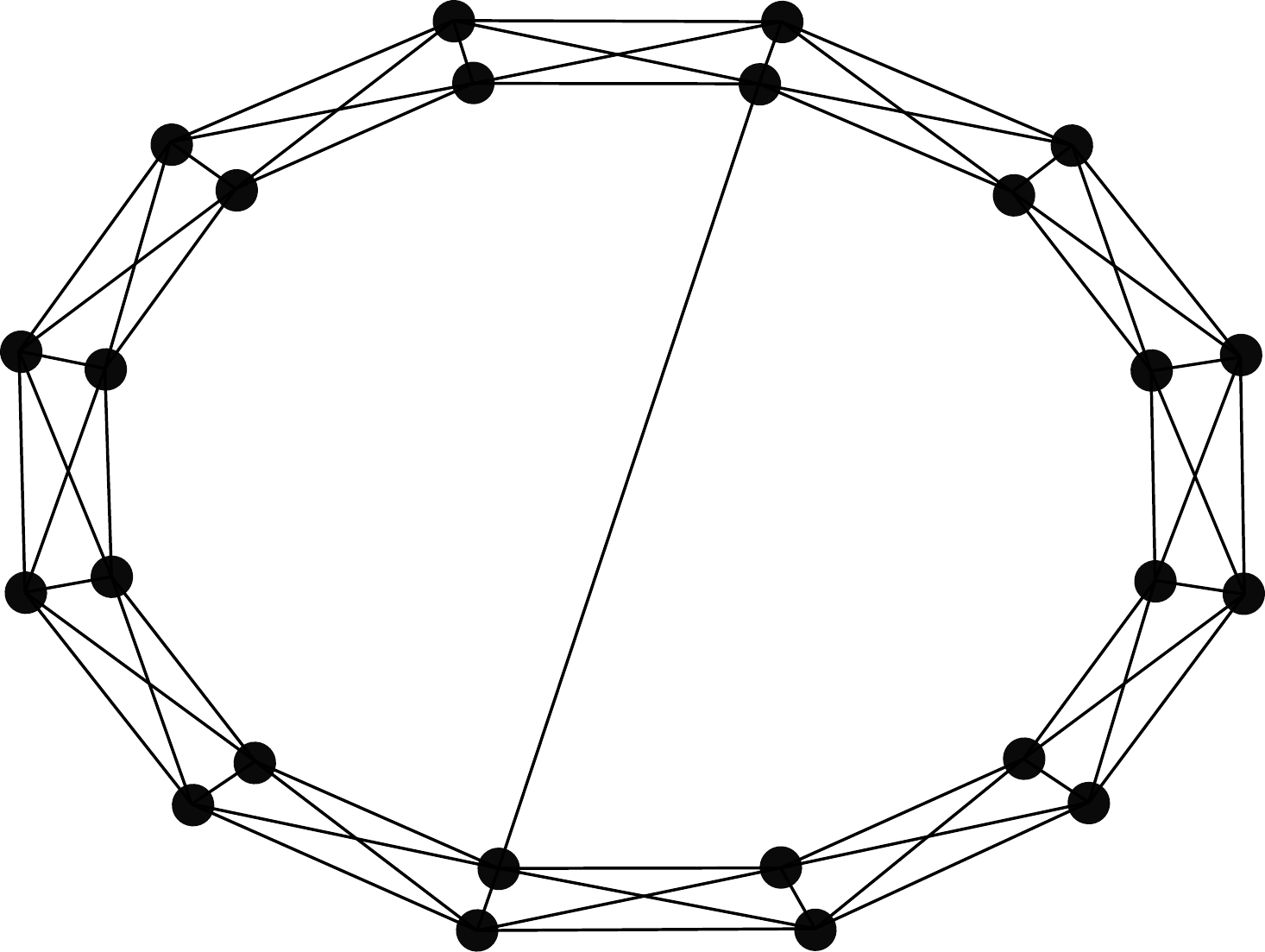}
\label{fig:graph_loop_2}}
\hspace{0.1cm}
\subfigure[]{\includegraphics[width=2.5cm]{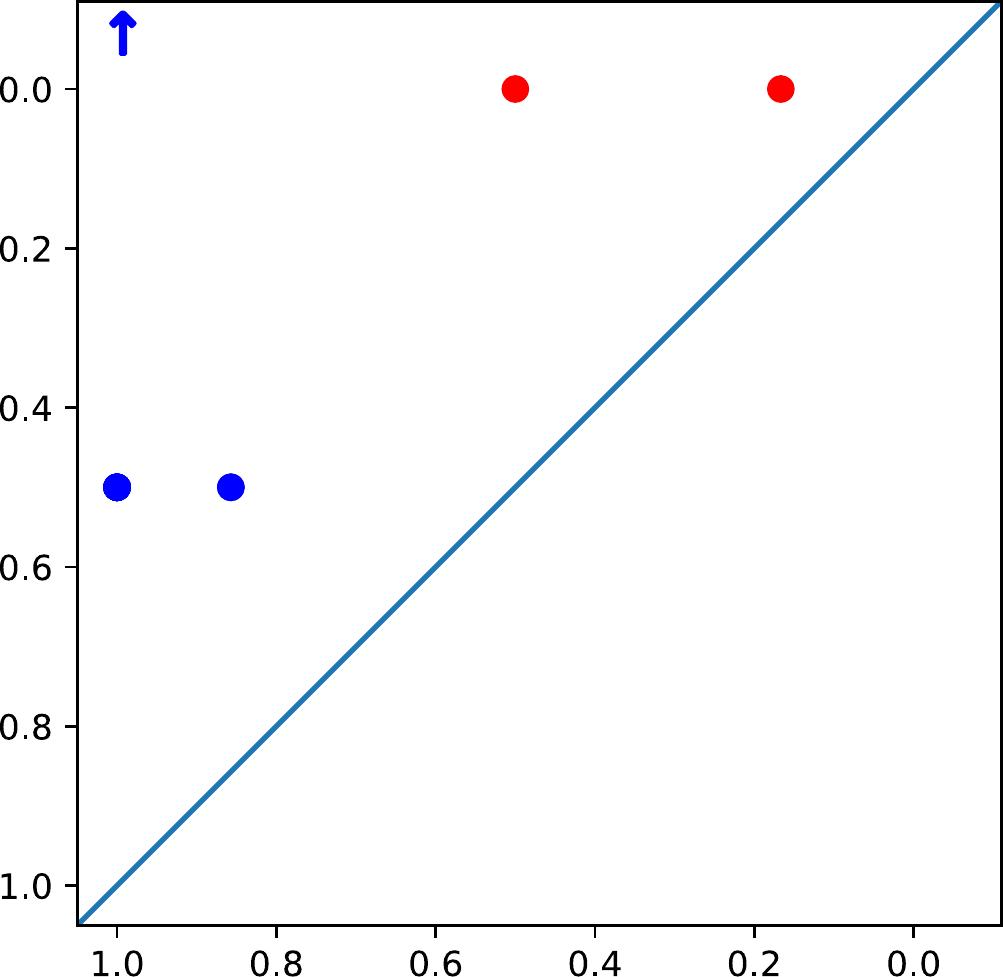}
\label{fig:graph_loop_2_cliqueness_PD}}
\hspace{0.1cm}
\subfigure[]{\includegraphics[width=2.5cm]{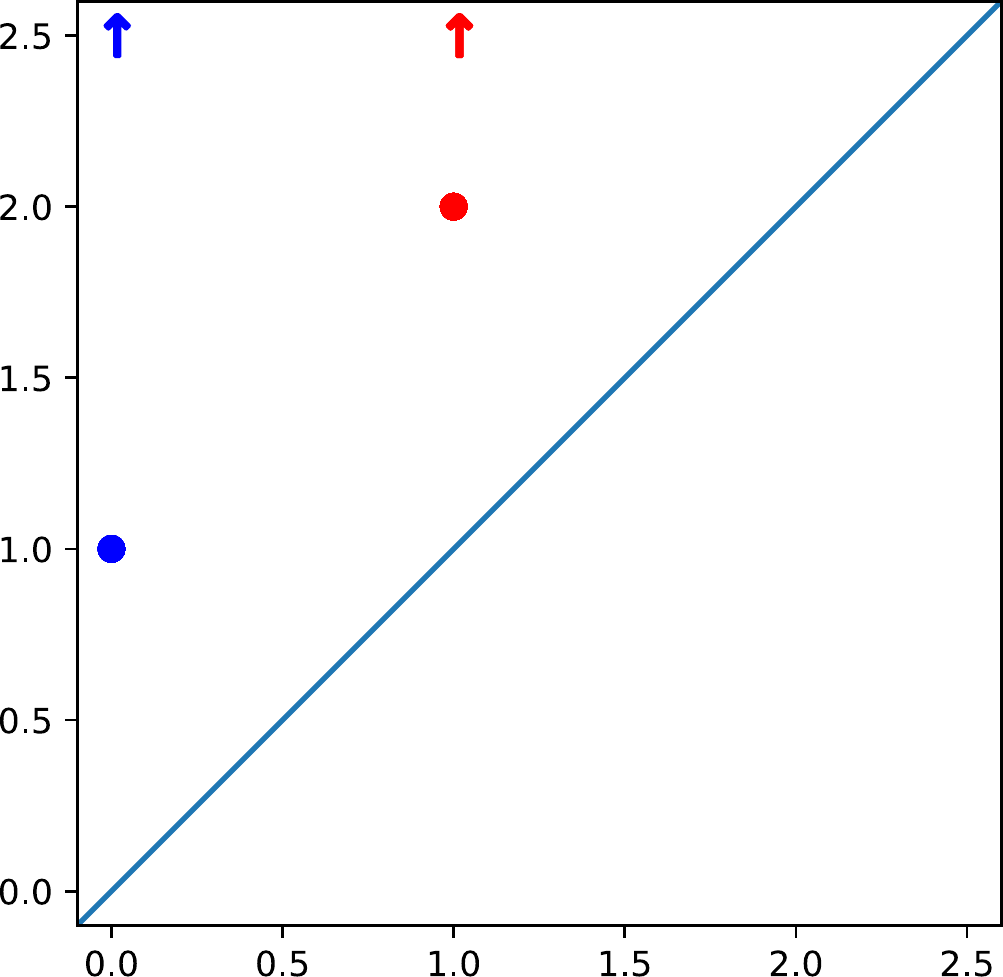}
\label{fig:graph_loop_2_clique_PD}}
\hspace{0.1cm}
\subfigure[]{\includegraphics[width=2.5cm]{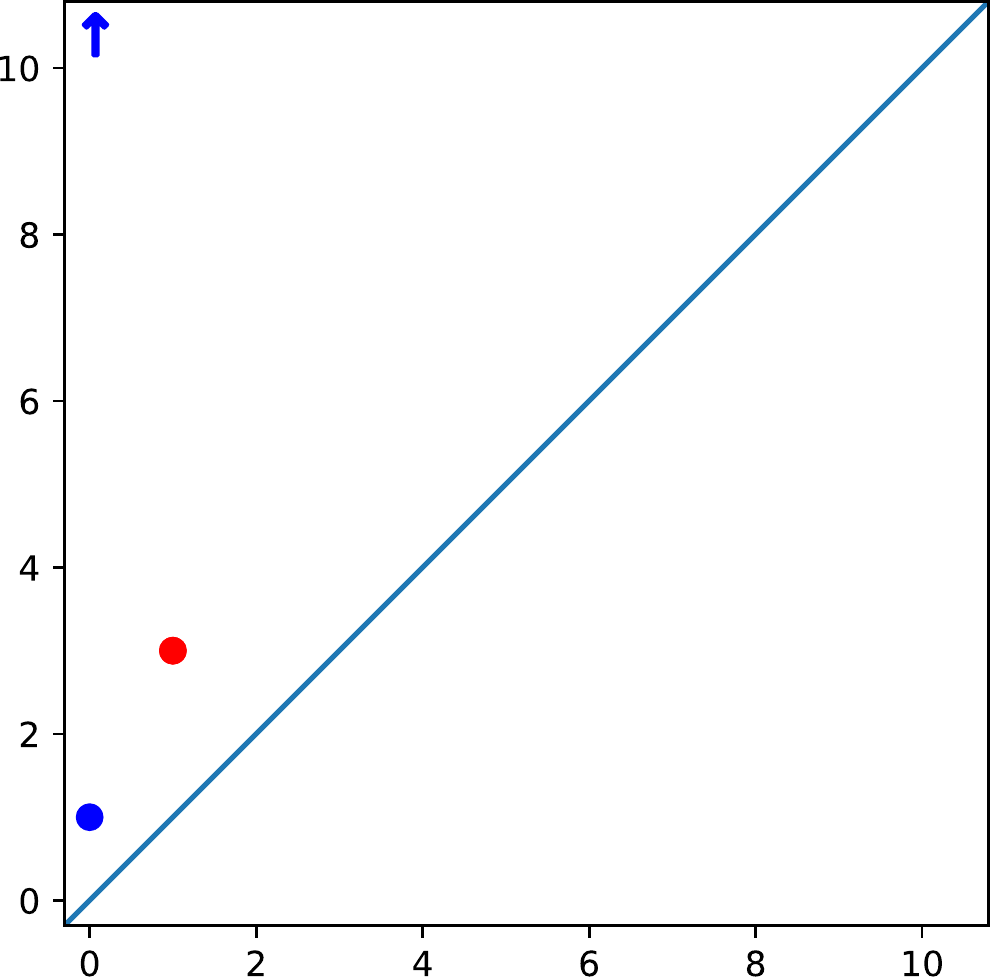}
\label{fig:graph_loop_2_rips_PD}}
\caption{The graphs in (a) and (e) contain a densely connected cycle and a densely connected cycle with an additional single edge across the cycle respectively. The $0$- and $1$-dimensional persistent diagrams computed using the proposed method are displayed in (b) and (f) respectively. The $0$- and $1$-dimensional persistent diagrams computed using a \textit{clique complex filtration} are displayed in (c) and (g) respectively. The $0$- and $1$-dimensional persistent diagrams computed using a \textit{power complex filtration} are displayed in (d) and (h) respectively.}
\label{fig:graph_loop}
\end{center}
\end{figure}

Note that, Suh et al. \cite{suh2019persistent} proposed a method for constructing a filtration for an unweighted graph which also uses the Jaccard index. This method maps the input unweighted graph to a weighted graph with the same topology. This is distinct from the method proposed in this article which maps the input unweighted graph to a complete weighted graph. The need to map to a complete graph to ensure stability and the ability to accurately discriminate between graphs is a key insight of this work.

\subsection{Implementation Details}
\label{sec:stable_graph_analysis:implementation}
In this section we present implementation details of the proposed topological graph analysis method described in the previous section.

The proposed method has structure which we exploit to achieve an efficient implementation. Graphs in the space $\mathcal{G}^W$ are complete graphs. However, for a sparse graph $G \in \mathcal{G}^U$ the corresponding graph $G' = W(G) \in \mathcal{G}^W$ will have a significant number of edges which the function $C$ maps to $0$. Recall that $K$ is the clique complex of $G'$. If a clique in $G'$ contains an edge which $C$ maps to $0$ then $f$ will map the corresponding simplex in $K$ to $0$. The function $f$ maps all simplicial in $K$ to a value greater than or equal to $0$. Therefore, with the exception of a single point in the $0$-dimensional persistence diagram corresponding to the connected component which is the complete graph, any point in a persistence diagram which is born through the additional of a simplex which $f$ maps to $0$ will immediately die and have persistence of $0$. Furthermore, with the exception of the point in the $0$-dimensional persistence diagram corresponding to the connected component which is the complete graph, all points in the persistence diagrams will die at a value greater than or equal to $0$. We exploit this structure in the following way. We only add to $E'$, the set of edges in $G'$, those edges which the function $C$ maps to a value greater than $0$. Subsequently, with the exception of a single point in the $0$-dimensional persistence diagram corresponding to the complete graph, for all points in the resulting persistence diagrams which do not die we replace their death value of $-\infty$ with a value of $0$. All isolated vertices in $G$ which are not adjacent to any edges will correspond to points in the $0$-dimensional persistence diagram with persistence $0$. We exploit this structure by removing all isolated vertices from $G$.

Pseudocode for the proposed method is presented in Algorithm \ref{alg:top_graph_analysis}. The goal of this algorithm is to compute the $p$-dimensional persistence diagram for a given value of $p$ and input graph $G \in \mathcal{G}^U$. The algorithm first removes isolated vertices from the unweighted graph $G$ (line \ref{alg:top_graph_analysis:rem_iso}). The weighted graph $G'$ is then initialized to be an edgeless graph with the same vertex set as $G$ (line \ref{alg:top_graph_analysis:init_G}). Those edges which the function $C$ maps to a non-zero value are then added to $G'$ (lines \ref{alg:top_graph_analysis:init_G_E_1} to \ref{alg:top_graph_analysis:init_G_E_2}). Next, the clique complex $K$ is constructed by iteratively adding simplices of dimensions $0$ to $p+1$ (lines \ref{alg:top_graph_analysis:init_K_1} to \ref{alg:top_graph_analysis:init_K_2}). Since the goal is to compute the $p$-dimensional persistence diagram, simplices of larger dimensions do not need to be considered. The cliques in $G'$ corresponding to these simplices are computed using the method proposed by Zhang et al. \cite{zhang2005genome}. The filtration of $K$ is then computed using Equation \ref{eq:total_order_relation} (line \ref{alg:top_graph_analysis:filtration}). Next, the $p$-dimensional persistence diagram $\mathcal{D}$ of this filtration is computed using the method by Zomorodian et al. \cite{zomorodian2005computing} (line \ref{alg:top_graph_analysis:homology}).

Let $\max$ be a map from a $0$-dimensional persistence diagram to the element in this set which has the largest birth value and has infinite persistence. If the persistence diagram $\mathcal{D}$ is of dimension $0$, for each point in $\mathcal{D} \backslash \max(D)$ if the corresponding death value is $-\infty$ we replace this with $0$ (lines \ref{alg:top_graph_analysis:replace_1} to \ref{alg:top_graph_analysis:replace_2}). For example, consider the $0$-dimensional persistence diagram $\lbrace (1, -\infty), (0.8, -\infty), (0.9, 0.1)  \rbrace$. This step will replace this persistence diagram with $\lbrace (1, -\infty), (0.8, 0.0), (0.9, 0.1)  \rbrace$. If the persistence diagram $\mathcal{D}$ is of dimension greater than $0$, for each point in $\mathcal{D}$ if the corresponding death value is $-\infty$ we replace this with $0$ (lines \ref{alg:top_graph_analysis:replace_3} to \ref{alg:top_graph_analysis:replace_4}). For example, consider the $1$-dimensional persistence diagram $\lbrace (1, -\infty), (0.8, -\infty), (0.9, 0.1)  \rbrace$. This step will replace this persistence diagram with $\lbrace (1, 0.0), (0.8, 0.0), (0.9, 0.1)  \rbrace$.

\begin{algorithm}
\label{alg:top_graph_analysis}
\caption{Proposed Topological Graph Analysis Method}
 \KwIn{An unweighted graph $G=(V,E) \in \mathcal{G}^U$ to which the method will be applied and $p \in \mathbb{Z}^{\geq}$ the dimension of the persistence diagram to be computed.}
 \KwOut{A $p$-dimensional persistence diagram describing topological features of $G$.}
 \BlankLine
\Begin{
 Remove isolated vertices from $G$ \\ \label{alg:top_graph_analysis:rem_iso}
 \vspace{.2cm}
 Initialize $G' = (V', E', C)$ with $V'=V$ and $E' = \emptyset$ \\ \label{alg:top_graph_analysis:init_G}
 \For{$v_1, v_2 \in V$} { \label{alg:top_graph_analysis:init_G_E_1}
 	\If{$C((v_1, v_2)) > 0$}{
 		$E' = E' \cup (v_1, v_2)$
 	}
 } \label{alg:top_graph_analysis:init_G_E_2}
 \vspace{.2cm}
 Initialize $K = \emptyset$ \\ \label{alg:top_graph_analysis:init_K_1}
 \For{$i \leftarrow 0$ \KwTo $p+1$} {
 	$K = K \cup \lbrace \sigma \: \vert \: \sigma \text{ is a clique in $G'$ of size $p$.} \rbrace$
 } \label{alg:top_graph_analysis:init_K_2}
 \vspace{.2cm}
 Compute filtration $\mathcal{K} = \left( K_0, K_1 \dots, K_{m} \right)$ of $K$ \\ \label{alg:top_graph_analysis:filtration}
 \vspace{.2cm}
 Compute $\mathcal{D}$ the $p$-dimensional persistence diagram of $\mathcal{K}$ \\ \label{alg:top_graph_analysis:homology}
 \uIf{$p=0$}{ \label{alg:top_graph_analysis:replace_1}
 	\For{$(b, d) \in \mathcal{D} \backslash \max(D)$} {
 		\uIf{$d = -\infty$}{
 			$d=0$
 		}
 	}
 } \label{alg:top_graph_analysis:replace_2}
 \uElse{ 
 	\For{$(b, d) \in \mathcal{D}$} { \label{alg:top_graph_analysis:replace_3}
 		\uIf{$d = -\infty$}{
 			$d=0$
 		}
 	}
 } \label{alg:top_graph_analysis:replace_4}
 return $D$
}
\end{algorithm}

All the steps in Algorithm \ref{alg:top_graph_analysis} have linear computational complexity apart from computing the cliques in the graph $G'$ and computing the $p$-dimensional persistence diagram of the filtration $\mathcal{D}$. These steps have computational complexity of $O(n^{p})$ and $O(m^3)$ respectively where $n$ is the number of vertices in $G'$ and $m$ is the number of simplices in $\mathcal{D}$. Therefore the total complexity of the algorithm is $O(n^{p}) + O(m^3)$. Empirically we find that the graph $G'$ is generally sparse and in turn the wall clock time of the algorithm to be reasonable.
% This software will be made open source upon publication of this article.

\section{Stability Analysis}
\label{sec:stability}
This section presents a formal stability analysis of both the existing and the proposed methods for graph topological analysis. In subsection \ref{sec:stability_clique_power} we prove existing methods to not be stable. Subsequently, in subsection \ref{sec:stability_cliqueness} we prove the proposed method to be stable.

\subsection{Clique and Power Complex Filtration}
\label{sec:stability_clique_power}
In this subsection we prove by counter example the two existing methods for topological graph analysis which compute a \textit{clique complex filtration} and a \textit{power complex filtration} to not be stable whereby a small change in graph connectivity can result in a large change in the corresponding persistence diagrams. Although there exist other methods for constructing filtrations for unweighted graphs, these two methods are the most commonly used in practice.

Let $\mathcal{G}^U_{V}$ be the space of unweighted graphs which have an equal set of vertices $V$. We define a distance $D^U: \mathcal{G}^U_V \times \mathcal{G}^U_V \rightarrow \lbrace 0,1 \rbrace$ on this space in Equation \ref{eq:unweighted_graph_distance} where $\mathbbm{1}_{E}$ denotes an indicator function on the set $E$.

\begin{equation}
\label{eq:unweighted_graph_distance}
D^U((V, E_1), (V, E_2)) = \max_{i,j \in V} | \mathbbm{1}_{E_1}((i,j)) - \mathbbm{1}_{E_2}((i,j)) |
\end{equation}

\begin{theorem}
\label{th1}
Let $G_1 = (V, E_1)$, $G_2 = (V, E_2)$ $\in \mathcal{G}^U_V$ be two unweighted graphs. Let $P_1$ and $P_2$ be persistent diagrams of a given dimension $p$ corresponding to $G_1$ and $G_2$ respectively and computed using the clique complex filtration. The Bottleneck distance $B$ between $P_1$ and $P_2$ is not bounded above by a constant times the distance $D^U$ between the graphs $G_1$ and $G_2$; that is, $B(P_1, P_2)$ $\nleq$ $\alpha D^U(G_1, G_2)$ for a constant $\alpha$.
\end{theorem}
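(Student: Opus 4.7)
The strategy is to produce a counter-example, that is, a pair of graphs $G_1, G_2 \in \mathcal{G}^U_V$ satisfying $D^U(G_1, G_2) = 1$ yet $B(P_1, P_2) = \infty$. Since no finite constant $\alpha$ can satisfy $\infty \le \alpha \cdot 1$, a single such pair refutes the existence of any linear bound.

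The guiding observation is that in the bottleneck matching between two persistence diagrams, a point with infinite death coordinate can only be paired with another infinite-persistence point at finite $L_\infty$ cost; otherwise it is paired with a diagonal point at cost $\infty$. So any mismatch in the number of infinite-persistence points across $P_1$ and $P_2$ forces $B(P_1, P_2) = \infty$. I will engineer $G_1$ and $G_2$ precisely so that their $p$-dimensional diagrams have different numbers of infinite-persistence points while differing by a single edge.

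For $p = 0$ I will use the configuration already sketched in Figure~\ref{fig:stable_discrimination}: partition $V$ into equal halves $A$ and $B$ with $|V| = 2k$ and $k \ge 2$; let $G_1$ be the disjoint union of the complete graphs on $A$ and on $B$, and let $G_2$ be $G_1$ with a single edge from some $a \in A$ to some $b \in B$ added, so $D^U(G_1, G_2) = 1$. Tracking the clique complex filtration from $K_0$ onwards, $G_1$ collapses (at $K_1$) into two components that persist through every higher skeleton, whereas $G_2$ collapses into a single component; hence $P_1$ has two infinite-persistence points and $P_2$ has one, yielding $B(P_1, P_2) = \infty$. For $p = 1$ I will take $G_1$ to be a 4-cycle and $G_2$ the same cycle together with a chord: in $G_1$ the clique complex is itself $1$-dimensional and the single $1$-cycle persists through the end of the filtration, while in $G_2$ the chord creates two triangles in $K_2$ that fill the hole, producing only a finite-persistence $1$-class. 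For $p \ge 2$ I plan to iterate this idea, using a graph whose clique complex triangulates a $p$-sphere and whose missing edge, when added, introduces exactly the top-dimensional simplices needed to fill the $p$-dimensional hole.

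The main obstacle I expect is verifying the higher-dimensional constructions: one must be careful that adding the single missing edge produces exactly the cliques required to fill the $p$-dimensional hole and does not accidentally create or destroy unrelated homology classes. The $p = 0$ and $p = 1$ cases are unambiguous and, since the statement is to be read dimension-by-dimension, each already suffices in its own dimension; the remaining effort is to spell out the pattern for general $p$ in a self-contained way, or to observe that the $p = 0$ example alone already witnesses the failure of stability for the method as a whole.
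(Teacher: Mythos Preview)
Your proposal is correct and follows the same overall strategy as the paper: exhibit a pair of graphs with $D^U(G_1,G_2)=1$ whose persistence diagrams differ in the number of infinite-persistence points, forcing $B(P_1,P_2)=\infty$.

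The differences are in the concrete witnesses and in scope. The paper gives a single counter-example in dimension $p=1$: two graphs (those of Figure~\ref{fig:clique_stability_proof_a} and~\ref{fig:clique_stability_proof_b}) differing by one edge, whose $1$-dimensional diagrams are $\{(1,\infty),(1,\infty),(1,\infty)\}$ and $\{(1,2),(1,2),(1,2),(1,2)\}$ respectively, and stops there. You instead supply separate examples for $p=0$ (two disjoint cliques versus the same cliques joined by an edge) and $p=1$ (a $4$-cycle versus a $4$-cycle with a chord), and sketch a sphere-based scheme for $p\ge 2$. Your examples are arguably simpler---the $4$-cycle plus chord is the minimal instance of the phenomenon---and your treatment is more systematic across dimensions, whereas the paper is content with one dimension as sufficient to refute stability of the method. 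Both approaches are valid; yours is slightly more thorough, the paper's is slightly more economical.
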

 
\begin{proof}
We present a proof by counter example. Let $G_1$ and $G_2$ be the two graphs in Figures \ref{fig:clique_stability_proof_a} and \ref{fig:clique_stability_proof_b} respectively. These two graphs differ by a single edge and consequently $D^U(G_1, G_2)=1$. Let us denote the $1$-dimensional persistence diagrams corresponding to these graphs as $P_1$ and $P_2$ respectively which equal the sets $\lbrace (1,\infty), (1,\infty), (1,\infty) \rbrace$ and $\lbrace (1,2), (1,2), (1,2), (1,2) \rbrace$ respectively. The bottleneck distance between these persistence diagrams is $\infty$; that is, $B(P_1, P_2) = \infty$. Hence $B(P_1, P_2)$ $\nleq$ $\alpha D^U(G_1, G_2)$.
\end{proof}

% g = graph_clique([[0,1,3,4],[1,0,2],[2,1,3,4],[3,2,0],[4,0,2]])
% g = graph_clique([[0,1,3,4,2],[1,0,2],[2,1,3,4,0],[3,2,0],[4,0,2]])
% g = graph_clique([[0,1,3,4,5],[1,0,2],[2,1,3,4,5],[3,2,0],[4,0,2],[5,0,2]])
% g = graph_clique([[0,1,3,4,5,2],[1,0,2],[2,1,3,4,5,0],[3,2,0],[4,0,2],[5,0,2]])
\begin{figure}
\begin{center}
\subfigure[]{\includegraphics[height=3.cm]{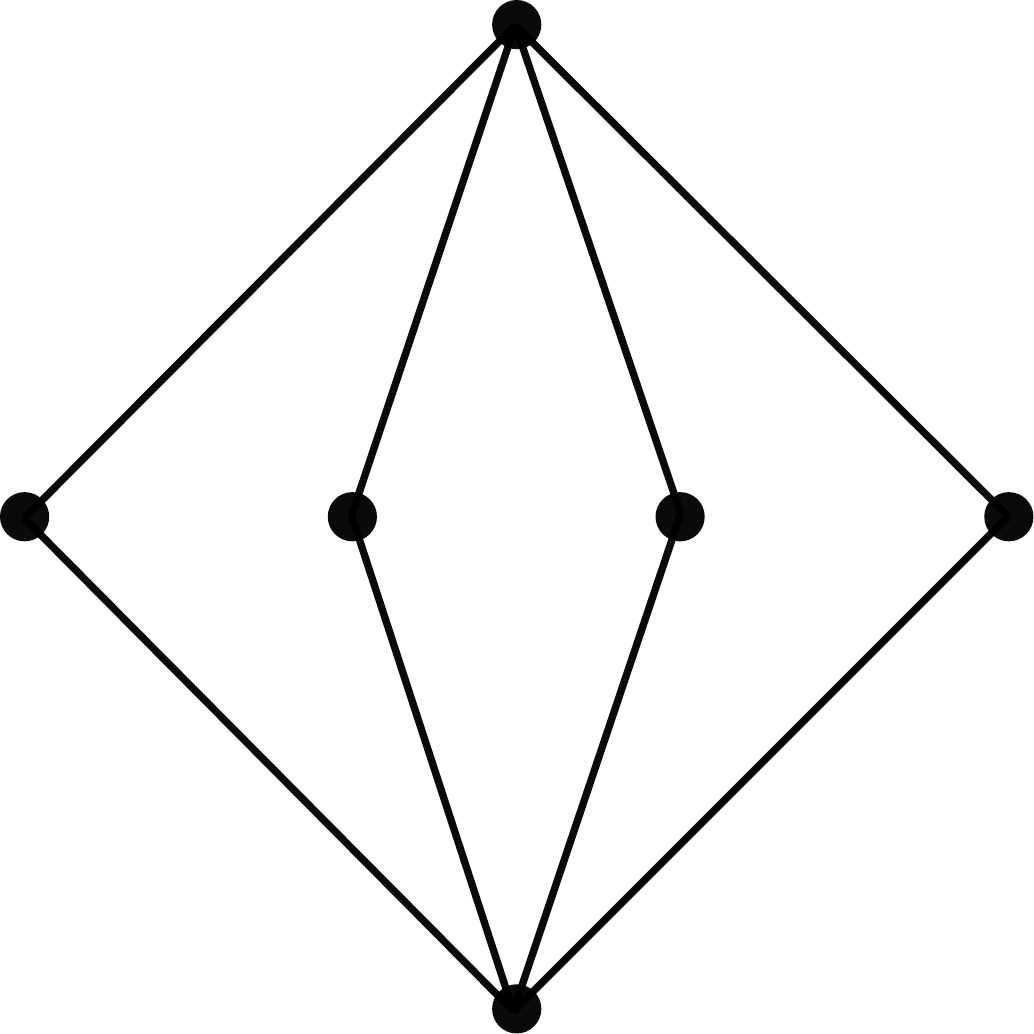}
\label{fig:clique_stability_proof_a}}
\hspace{2cm}
\subfigure[]{\includegraphics[height=3.cm]{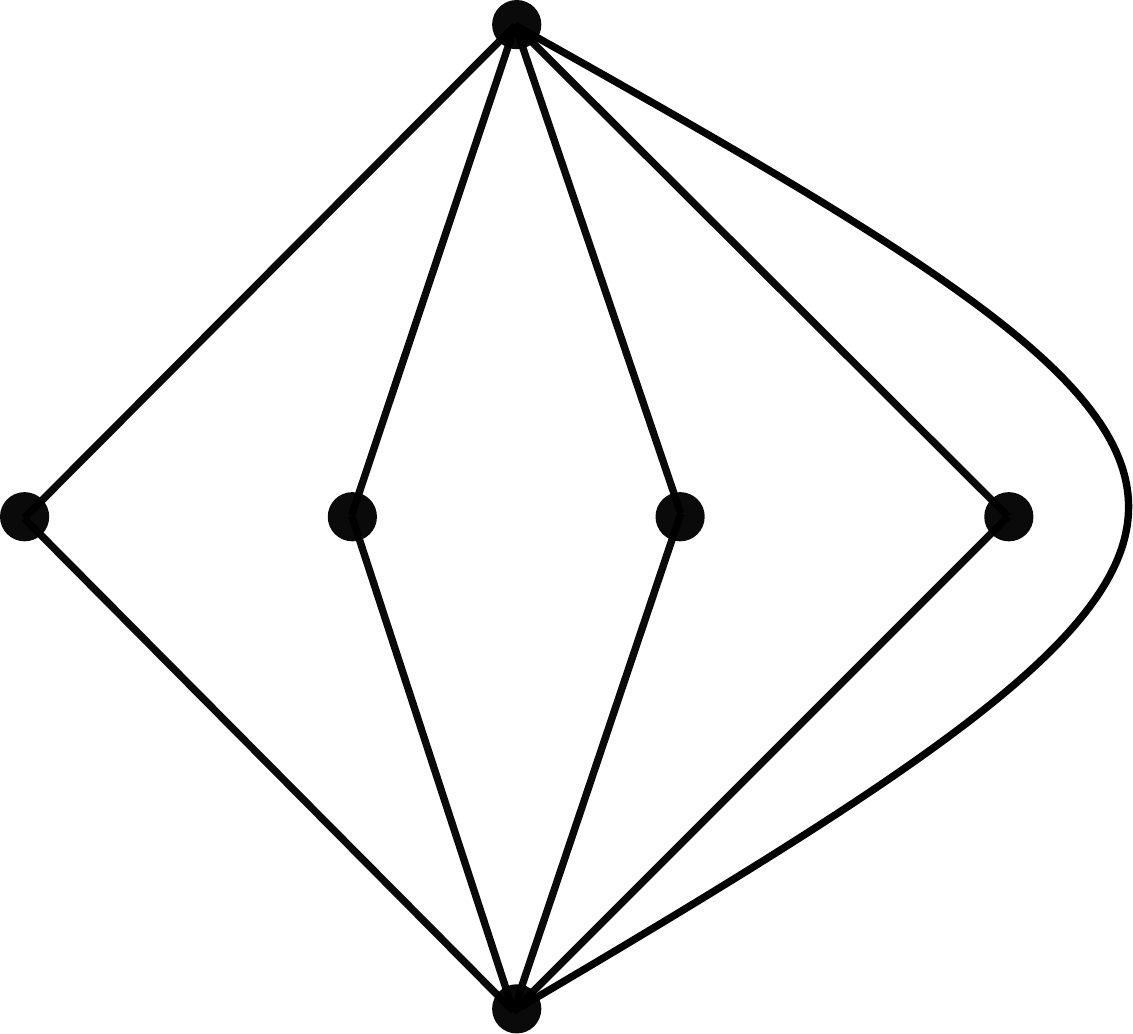}
\label{fig:clique_stability_proof_b}}
\caption{The graphs in (a) and (b) differ by a single edge.}
\label{fig:intro_shapes_single_component}
\end{center}
\end{figure}

Theorem \ref{th1} demonstrates that the addition of a single edge to the graph in Figure \ref{fig:clique_stability_proof_a} results in the persistence diagram changing by a distance of $\infty$. Note that, it is possible to construct a pair of graphs which differ by a single edge such that the corresponding $1$-dimensional persistence diagrams differ by an arbitrary number of points equalling $(0,\infty)$. This can be achieved through adding additional paths from the upper to lower vertices of the graph in Figure \ref{fig:clique_stability_proof_a}.

\begin{theorem}
\label{th2}
Let $G_1 = (V, E_1)$, $G_2 = (V, E_2)$ $\in \mathcal{G}^U_V$ be two unweighted graphs. Let $P_1$ and $P_2$ be persistent diagrams of a given dimension $p$ corresponding to $G_1$ and $G_2$ respectively and computed using the power complex filtration. The Bottleneck distance $B$ between $P_1$ and $P_2$ is not bounded above by a constant times the distance $D^U$ between the graphs $G_1$ and $G_2$; that is, $B(P_1, P_2)$ $\nleq$ $\alpha D^U(G_1, G_2)$.
\end{theorem}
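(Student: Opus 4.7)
The plan is to follow the strategy of Theorem 1 and give a counter-example, but with an important adjustment: unlike in the clique complex setting, the power complex filtration of a connected graph terminates at the clique complex of the complete graph, which is a simplex and hence contractible. Consequently every persistence point in dimension $p \geq 1$ has finite death, so a single pair of graphs cannot realise $B(P_1,P_2)=\infty$ while $D^U(G_1,G_2)$ stays finite. Instead I would exhibit a family of pairs $(G_1^{(n)}, G_2^{(n)})$ with $D^U = 1$ for every $n$ and with $B(P_1,P_2) \to \infty$ as $n \to \infty$, which is enough to rule out any uniform constant $\alpha$.

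The family I would use is the simplest one available. For each $n \geq 4$ fix a vertex set $V$ of size $n$, let $G_2^{(n)} = C_n$ be the cycle on $V$, and let $G_1^{(n)} = P_n$ be the path obtained from $C_n$ by deleting a single edge; clearly $D^U(G_1^{(n)}, G_2^{(n)}) = 1$. I would then analyse the $1$-dimensional persistence diagrams of the two power complex filtrations. Since $G_1^{(n)}$ is a tree, each power $(G_1^{(n)})^{k}$ is the power graph of a tree metric and its clique complex is contractible (a standard Vietoris--Rips fact for $0$-hyperbolic metric spaces, which can alternatively be verified by a direct deformation retraction onto the underlying path), so the $1$-dimensional persistence diagram $P_1$ is empty. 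For $G_2^{(n)} = C_n$, the fundamental $1$-cycle born at $K_1$ persists in the clique complex of $C_n^{k}$ until $k$ is large enough for the cliques to fill the hole, yielding a single point $(1, d_n)$ in $P_2$ whose death value $d_n$ grows linearly in $n$ by the classical computation of Vietoris--Rips filtrations for equilateral cycles.

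Putting these facts together, $B(P_1, P_2)$ equals the $L_\infty$ distance from $(1, d_n)$ to the diagonal, namely $(d_n - 1)/2$, which tends to infinity, while $D^U(G_1^{(n)}, G_2^{(n)}) = 1$ for every $n$; thus no constant $\alpha$ can satisfy $B \leq \alpha\,D^U$ uniformly over the family, completing the counter-example. The main obstacle is rigorously establishing the lower bound on $d_n$: one must show that no $2$-chain in the clique complex of $C_n^{k}$ bounds the fundamental $1$-cycle until $k$ is proportional to $n$. This can be handled by a filling argument exploiting the fact that every clique in $C_n^{k}$ spans at most $k+1$ consecutive cycle vertices, so any $2$-chain bounding the whole cycle must be glued from triangles whose total angular span around the cycle is at least $n$, which is impossible until $k$ is on the order of $n/3$; the contractibility claim for the tree case is far more routine and can be dispatched by the standard hyperbolicity argument or by an explicit collapse.
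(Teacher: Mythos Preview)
Your argument is essentially sound, but you have taken a considerably harder road than the paper does, and the detour stems from an oversight in your opening paragraph. You restrict attention to connected graphs and dimensions $p\geq 1$, observing correctly that in that regime every bar has finite length so a single pair cannot give $B=\infty$. But nothing in the theorem forces either restriction. The paper exploits exactly this: it works in dimension $p=0$ and lets $G_1$ be two disjoint cliques while $G_2$ is the same two cliques joined by one edge. Since graph powers never change the number of connected components, the power complex filtration of $G_1$ has two $0$-dimensional classes of infinite persistence while that of $G_2$ has one; hence $B(P_1,P_2)=\infty$ for this single pair with $D^U=1$, and the proof is two lines.

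Your cycle-versus-path family does establish the non-existence of a uniform constant, and the ingredients you invoke (contractibility of Vietoris--Rips complexes of tree metrics, linear-in-$n$ death time for the fundamental class of $C_n$) are correct and known, though the latter still needs a citation or a cleaner argument than the sketch you give. What you gain is an instability statement that lives entirely within connected graphs and $H_1$, which is arguably more informative; what you lose is brevity, and also the slightly stronger conclusion that a \emph{single} pair already witnesses $B=\infty$. If you want the shortest proof, switch to $p=0$ and use disconnection as the paper does.
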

 
\begin{proof}
We present a proof by counter example. Let $G_1$ and $G_2$ be the two graphs in Figures \ref{fig:power_stability_proof_a} and \ref{fig:power_stability_proof_b} respectively. These two graphs differ by a single edge and consequently $D^U(G_1, G_2)=1$. Let us denote the $0$-dimensional persistence diagrams corresponding to these graphs as $P_1$ and $P_2$ respectively which equal the sets $\lbrace (0,\infty), (0,\infty) \rbrace$ and $\lbrace (0,\infty) \rbrace$ respectively. The bottleneck distance between these persistence diagrams is $\infty$; that is, $B(P_1, P_2) = \infty$. Hence $B(P_1, P_2)$ $\nleq$ $\alpha D^U(G_1, G_2)$
\end{proof}

% g = graph_cliqueness([[0,1,2,3],[0,1,2,3],[0,1,2,3],[0,1,2,3], [4,5,6,7],[4,5,6,7],[4,5,6,7],[4,5,6,7]])
% g = graph_cliqueness([[0,1,2,3,4],[0,1,2,3],[0,1,2,3],[0,1,2,3], [4,5,6,7,0],[4,5,6,7],[4,5,6,7],[4,5,6,7]])
\begin{figure}
\begin{center}
\subfigure[]{\includegraphics[height=2.cm]{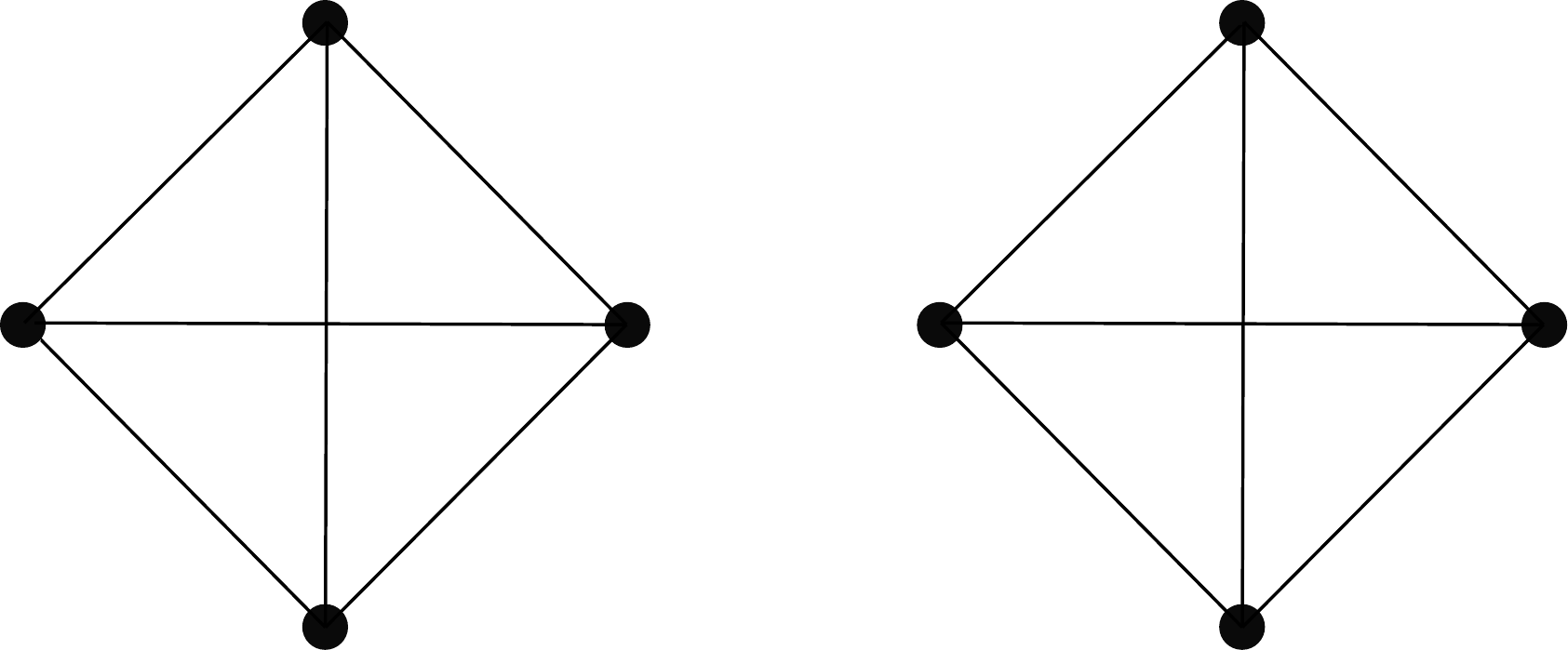}
\label{fig:power_stability_proof_a}}
\hspace{1cm}
\subfigure[]{\includegraphics[height=2.cm]{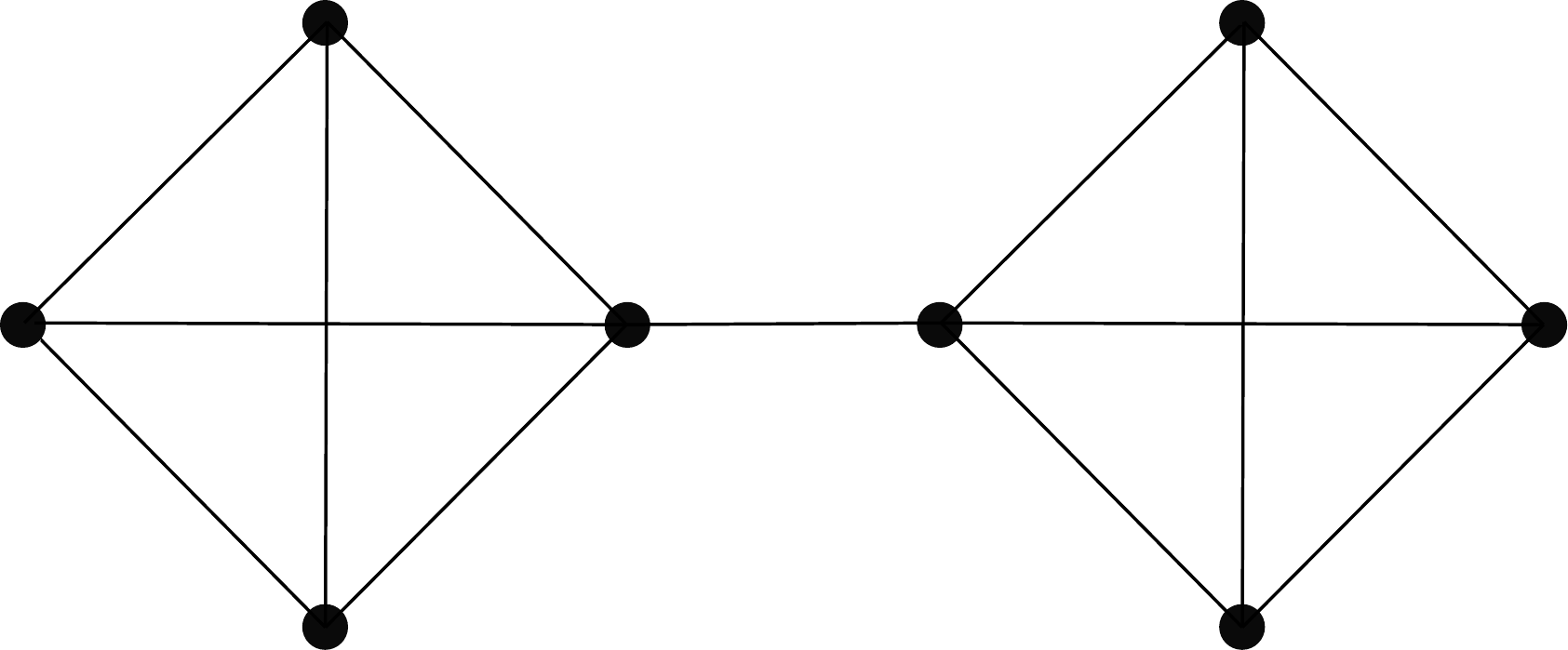}
\label{fig:power_stability_proof_b}}
\caption{The graphs in (a) and (b) differ by a single edge.}
\label{fig:intro_shapes_single_component}
\end{center}
\end{figure}

\subsection{Cliqueness Complex Filtration}
\label{sec:stability_cliqueness}
In this section we prove that the proposed topological graph analysis method which computes a \textit{cliqueness complex filtration} is stable with respect changes in graph connectivity whereby a small change in graph connectivity does not result in a large change in the corresponding persistence diagrams.

Let $\mathcal{G}^W_{V,E}$ be the space of weighted graphs which have an equal set of vertices $V$ and an equal set of edges $E$. We define a distance $D^W: \mathcal{G}^W_{V,E} \times \mathcal{G}^W_{V,E} \rightarrow [ 0,1 ]$ in Equation \ref{eq:weighted_graph_distance}.

\begin{equation}
\label{eq:weighted_graph_distance}
D^W((V, E, C_1), (V, E, C_2)) = \max_{e \in E} | C_1(e) - C_2(e) |
\end{equation}

\begin{lemma}
\label{lem1}
Let $G_1 = (V, E_1)$, $G_2 = (V, E_2)$ $\in \mathcal{G}^U_V$ be two unweighted graphs and let $G'_1 = (V, E, C_1)$, $G'_2 = (V, E, C_2)$ $\in \mathcal{G}^W_{V,E}$ respectively be the corresponding weighted graphs computed using Equations \ref{eq:weighted_complete_graph} and \ref{eq:edge_cliqueness}. The $D^W$ distance between the graphs $G'_1$ and $G'_2$ is bounded above by the distance $D^U$ between the graphs $G_1$ and $G_2$; that is, $D^W(G'_1, G'_2)$ $\leq$ $D^U(G_1, G_2)$.
\end{lemma}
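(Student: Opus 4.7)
The plan is to observe that $D^U$ is built from indicator functions, so it takes only the values $0$ or $1$, while the weighting function $C$ has codomain $[0,1]$. This lets me reduce the lemma to a simple two-case analysis rather than a detailed calculation involving the Jaccard index formula in Equation \ref{eq:edge_cliqueness}.

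First I would dispose of the case $D^U(G_1, G_2) = 0$. By the definition in Equation \ref{eq:unweighted_graph_distance}, the maximum over $V \times V$ of the absolute difference of indicators vanishes, so $E_1 = E_2$. Hence the closed neighborhood map $N$ is identical in $G_1$ and $G_2$, so by Equation \ref{eq:edge_cliqueness} the weighting functions $C_1$ and $C_2$ agree on every edge $e \in E$. Therefore $D^W(G'_1, G'_2) = 0 = D^U(G_1, G_2)$.

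Second, for the remaining case $D^U(G_1, G_2) = 1$, I would simply use the fact that $C_i: E \to [0,1]$ for $i = 1, 2$. This gives $|C_1(e) - C_2(e)| \leq 1$ for every $e \in E$, and taking the maximum over $E$ yields $D^W(G'_1, G'_2) \leq 1 = D^U(G_1, G_2)$.

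Combining the two cases gives the claimed inequality. The lemma is essentially trivial once one notes that $D^U$ is discrete-valued on $\{0,1\}$ and $D^W$ is continuous on $[0,1]$; the only obstacle is really a notational one, namely carefully writing out that equality of the edge sets forces pointwise equality of the Jaccard-based weighting functions in the first case.
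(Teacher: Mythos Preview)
Your proposal is correct and follows essentially the same approach as the paper: a two-case analysis based on whether $D^U(G_1,G_2)$ equals $0$ or $1$, using in the first case that $E_1=E_2$ forces $C_1=C_2$, and in the second case that the codomain of $C$ is $[0,1]$. Your write-up is in fact slightly more detailed than the paper's in the $D^U=0$ case, where you spell out that equal edge sets give equal closed neighbourhoods and hence equal Jaccard weights.
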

 
\begin{proof}
The distance $D^U(G_1, G_2)$ takes the value $0$ or $1$. We consider each of these cases in turn. $D^U(G_1, G_2)$ takes the value $0$ if and only if $E_1 = E_2$. In this case $D^W(G'_1, G'_2) = 0$ and the inequality is satisfied. $D^U(G_1, G_2)$ takes the value $1$ if and only if $E_1 \neq E_2$. In this case $D^W(G'_1, G'_2)$ takes a value in the range $[0,1]$ and the inequality is satisfied.
\end{proof}

Let $\mathbb{R}^K$ denote the space of real valued monotonic functions on a simplicial complex $K$. We define the distance $D^K:\mathbb{R}^K \times \mathbb{R}^K \rightarrow \mathbb{R}$ in Equation \ref{eq:simplicial_distance} between two elements in this space.

\begin{equation}
\label{eq:simplicial_distance}
D^K(f,g) = \max_{\sigma \in K} | f(\sigma) - g(\sigma) |
\end{equation}

\begin{lemma}
\label{lem2}
Let $G_1 = (V, E, C_1)$, $G_2 = (V, E, C_2)$ $\in \mathcal{G}^W_{V,E}$ be two weighted graphs. Let $K$ be the clique complex corresponding to the graph $(V,E)$. Let $f_1, f_2 \in \mathbb{R}^K$ be the real valued monotonic functions defined on $K$ by Equation \ref{eq:clique_filtration_f} and corresponding to $G_1$ and $G_2$ respectively. The distance $D^K$ between $f_1$ and $f_2$ is bounded above by the distance $D^W$ between $G_1$ and $G_2$; that is, $D^K(f,g)$ $\leq$ $D^W(G_1, G_2)$.
\end{lemma}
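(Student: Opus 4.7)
The plan is to establish the inequality pointwise on $K$ and then take the maximum over simplices. The key observation is that, for a fixed simplex $\sigma \in K$, the defining case in Equation \ref{eq:clique_filtration_f} depends only on $\sigma$ (its cardinality and the set of incident or contained edges), not on the weighting function $C$. Hence $f_1(\sigma)$ and $f_2(\sigma)$ are defined by the same clause of Equation \ref{eq:clique_filtration_f}, and they take the form of either a minimum or a maximum of $C_1$- or $C_2$-values over \emph{the same} finite set $S_\sigma$ of edges of $(V,E)$.

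First I would handle the three cases separately. If $\sigma$ is a simplex of dimension at least one, then $f_i(\sigma) = \min_{\gamma \in S_\sigma} C_i(b(\gamma))$ where $S_\sigma$ is the set of $1$-simplices of $\sigma$. If $\sigma$ is a vertex, then $f_i(\sigma) = \max_{\gamma \in S_\sigma} C_i(b(\gamma))$ where $S_\sigma$ is the set of edges of the complete graph incident to $\sigma$. In the trailing ``otherwise'' case, $f_1(\sigma) = f_2(\sigma) = 0$, so the bound is trivial.

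The second step is to invoke the standard $L^\infty$ Lipschitz property of min and max: for any finite index set $S$ and real numbers $a_\gamma, b_\gamma$,
\begin{equation*}
\bigl| \min_{\gamma \in S} a_\gamma - \min_{\gamma \in S} b_\gamma \bigr| \;\leq\; \max_{\gamma \in S} | a_\gamma - b_\gamma |,
\end{equation*}
and the analogous statement with $\min$ replaced by $\max$. Applying this with $a_\gamma = C_1(b(\gamma))$ and $b_\gamma = C_2(b(\gamma))$ gives, in each non-trivial case,
\begin{equation*}
| f_1(\sigma) - f_2(\sigma) | \;\leq\; \max_{\gamma \in S_\sigma} | C_1(b(\gamma)) - C_2(b(\gamma)) | \;\leq\; \max_{e \in E} | C_1(e) - C_2(e) | \;=\; D^W(G_1, G_2).
\end{equation*}

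Finally I would take the maximum over $\sigma \in K$ on the left-hand side, which by Equation \ref{eq:simplicial_distance} is exactly $D^K(f_1, f_2)$, yielding the desired inequality. There is no real obstacle here: the argument is elementary, and the only thing to be careful about is confirming that the case distinction in Equation \ref{eq:clique_filtration_f} depends only on $\sigma$ (so that $f_1$ and $f_2$ are reductions of a common family of edge values via the same combinatorial operation), which is immediate from the definition.
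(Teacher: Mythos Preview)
Your proposal is correct and follows essentially the same approach as the paper: a case split on the dimension of $\sigma$, reducing the bound to the edge case via the $L^\infty$-Lipschitz property of $\min$ and $\max$. The only difference is cosmetic---the paper treats dimension~$1$ first and then invokes it for dimensions~$0$ and~$\geq 2$, leaving the Lipschitz inequality implicit, whereas you group dimension~$\geq 1$ together and state that inequality explicitly; your version is the cleaner write-up of the same argument.
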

 
\begin{proof}
If the dimension of a simplex is $1$ the bijection $b$ between simplices and graph edges means that the difference between $f_1$ and $f_2$ on this simplex equals the difference between $C_1$ and $C_2$ on the corresponding edge. Therefore, in this case the inequality is satisfied.

If the dimension of a simplex is $0$ the difference between $f_1$ and $f_2$ on this simplex equals the difference between the maximum of $f_1$ and $f_2$ on a subset of simplices of dimension $1$. The inequality is satisfied for simplices of dimension $1$ and therefore is also satisfied in this case.

If the dimension of a simplex is $\geq 2$ the difference between $f_1$ and $f_2$ on this simplex equals the difference between the minimum of $f_1$ and $f_2$ on a subset of simplices of dimension $1$. The inequality is satisfied for simplices of dimension $1$ and therefore is also satisfied in this case.
\end{proof}

\begin{lemma}
\label{lem3}
Let $K$ be a simplicial complex and let $f_1, f_2 \in \mathbb{R}^K$ be two real valued monotonic functions on $K$. Let $P_1$ and $P_2$ be the persistent diagrams of a given dimension $p$ corresponding to the filtrations of $K$ induced by the functions $f_1$ and $f_2$ respectively. The Bottleneck distance $B$ between $P_1$ and $P_2$ is bounded above by the distance $D^K$ between the function $f_1$ and $f_2$; that is $B(P_1, P_2)$ $\leq$ $D^K(f,g)$.
\end{lemma}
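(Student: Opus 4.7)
The plan is to recognise Lemma \ref{lem3} as a direct instance of the classical persistence stability theorem of Cohen-Steiner, Edelsbrunner and Harer, applied to the superlevel-set filtration of a finite simplicial complex by a simplex-wise monotonic function. The essential observation is that the filtration $(K_0, K_1, \dots, K_m)$ constructed via the total order in Equation \ref{eq:total_order_relation}, together with the relabelling of birth and death indices by $f$-values in Equation \ref{eq:pd_map}, yields the same persistence diagram as the superlevel-set filtration $K^t = \{\sigma \in K : f(\sigma) \geq t\}$ parameterised by the real number $t$. The monotonicity of $f$ on $K$ is exactly what guarantees that each $K^t$ is a subcomplex of $K$.

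First I would make this equivalence precise. The total order in Equation \ref{eq:total_order_relation} inserts simplices in decreasing order of $f$-value; within a block of simplices sharing a common $f$-value, faces are inserted before cofaces, with further ties broken by the auxiliary ordering $l$. Any point produced by the persistence algorithm at indices $(i,j)$ with $f(K_i) = f(K_j)$ maps under Equation \ref{eq:pd_map} to a point on the diagonal, which contributes nothing to the bottleneck distance; hence the relabelled diagram agrees, as a multiset of off-diagonal points, with the persistence diagram of the superlevel-set filtration of $K$ by $f$.

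With this identification in place, the stability theorem applies directly. Since $K$ is finite, both $f_1$ and $f_2$ are trivially tame, and the theorem yields
$$B(P_1, P_2) \leq \| f_1 - f_2 \|_{\infty} = D^K(f_1, f_2),$$
which is the required inequality; I would cite \cite{edelsbrunner2010computational}, where the simplicial formulation of the stability theorem can be found.

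The main obstacle I anticipate is not any deep topology but rather the careful bookkeeping around the tie-breaking rule and the index-to-value relabelling: one has to verify that no off-diagonal points of the diagram depend on the arbitrary ordering $l$, and that the diagrams of the two filtrations really do coincide as multisets after applying Equation \ref{eq:pd_map}. Once this verification is complete the proof reduces to a direct citation of the stability theorem and fits in only a few lines.
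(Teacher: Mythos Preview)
Your proposal is correct and takes essentially the same approach as the paper: the paper's own proof consists of a single sentence stating that this is a classical stability theorem and citing \cite{edelsbrunner2010computational}. Your additional care in verifying that the paper's specific total-order construction and index-to-$f$-value relabelling indeed yield the superlevel-set persistence diagram is more detail than the paper provides, but the core argument is the same citation.
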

 
\begin{proof}
This is a classical stability theorem and a proof can be found in \cite{edelsbrunner2010computational}.
\end{proof}

\begin{theorem}
Let $G_1 = (V, E_1)$, $G_2 = (V, E_2)$ $\in \mathcal{G}^U_V$ be two unweighted graphs. Let $P_1$ and $P_2$ be the persistent diagrams of a given dimension $p$ computed from $G_1$ and $G_2$ using the method proposed in this article. The Bottleneck distance $B$ between $P_1$ and $P_2$ is bounded above by the distance $D^U$ distance between the graphs $G_1$ and $G_2$; that is, $B(P_1, P_2)$ $\leq$ $D^U(G_1, G_2)$.
\end{theorem}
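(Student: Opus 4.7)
The plan is to obtain the desired inequality by chaining together the three lemmas \ref{lem1}, \ref{lem2}, and \ref{lem3}, each of which controls one stage in the pipeline unweighted graph $\to$ weighted graph $\to$ monotonic function on the clique complex $\to$ persistence diagram. No new construction is needed; the work has already been done in the preceding lemmas, so this theorem is essentially a bookkeeping step.

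Concretely, I would first apply the map $W: \mathcal{G}^U \to \mathcal{G}^W$ defined in Equation \ref{eq:weighted_complete_graph} and the weighting function $C$ of Equation \ref{eq:edge_cliqueness} to obtain two weighted graphs $G'_1, G'_2 \in \mathcal{G}^W_{V,E}$ sharing the vertex set $V$ and the complete edge set $E$. Lemma \ref{lem1} then gives $D^W(G'_1, G'_2) \leq D^U(G_1, G_2)$. Next, I would consider the clique complex $K$ of $(V, E)$ together with the two monotonic functions $f_1, f_2 \in \mathbb{R}^K$ defined from $C_1$ and $C_2$ via Equation \ref{eq:clique_filtration_f}. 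Lemma \ref{lem2} yields $D^K(f_1, f_2) \leq D^W(G'_1, G'_2)$. Finally, the filtrations of $K$ induced by $f_1$ and $f_2$ produce, at dimension $p$, the persistence diagrams $P_1$ and $P_2$, and the classical stability theorem recalled in Lemma \ref{lem3} gives $B(P_1, P_2) \leq D^K(f_1, f_2)$.

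Concatenating these three inequalities produces
\begin{equation*}
B(P_1, P_2) \;\leq\; D^K(f_1, f_2) \;\leq\; D^W(G'_1, G'_2) \;\leq\; D^U(G_1, G_2),
\end{equation*}
which is the claim. One small point I would be careful about is the treatment of infinite-persistence points and the convention introduced in the implementation subsection whereby certain $-\infty$ death values are replaced by $0$. Since both $G_1$ and $G_2$ give rise to the same ambient complete graph on $V$, the same replacement convention is applied symmetrically to $P_1$ and $P_2$, so it does not alter the bottleneck distance relative to the unreplaced diagrams coming from Lemma \ref{lem3}.

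If I had to identify an obstacle, it would really only be this last verification that the modification of death values described in Section \ref{sec:stable_graph_analysis:implementation} is compatible with the chain of bounds; everything else is a transparent application of results already in hand. I would therefore emphasize in the write-up that the theorem is essentially a corollary of Lemmas \ref{lem1}--\ref{lem3}, and that the price for stability is precisely the insight, highlighted earlier, that $W$ must map into the space of complete weighted graphs so that $G'_1$ and $G'_2$ share the edge set $E$ required by Lemma \ref{lem2}.
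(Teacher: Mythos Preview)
Your proposal is correct and follows exactly the paper's own approach: the paper's proof simply chains Lemmas \ref{lem1}, \ref{lem2}, and \ref{lem3} to obtain $B(P_1,P_2) \leq D^K(f_1,f_2) \leq D^W(G'_1,G'_2) \leq D^U(G_1,G_2)$. Your additional remark about the $-\infty \to 0$ replacement convention is a careful observation that the paper does not address, but it does not change the argument.
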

 
\begin{proof}
Combining Lemmas \ref{lem1}, \ref{lem2} and \ref{lem3} gives $B(P_1, P_2)$ $\leq$  $D^K(f,g)$ $\leq$ $D^W(G'_1, G'_2)$ $\leq$ $D^U(G_1, G_2)$ from which implies $B(P_1, P_2)$ $\leq$ $D^U(G_1, G_2)$.
\end{proof}

\section{Experimental Analysis}
\label{sec:applications}
In this section we present an experimental evaluation of the proposed and existing methods for topological graph analysis. In subsections \ref{sec:applications:synthetic} and \ref{sec:applications:real} we present an experimental evaluation of these methods with respect to random and real graphs respectively.

\subsection{Analysis of Random Graphs}
\label{sec:applications:synthetic}
A community in a graph is a subset of vertices which are densely connected. The \textit{stochastic block model} is a generative model for undirected and unweighted graphs where the graphs generated have a high probability of containing communities \cite{karrer2011stochastic}. The first random graphs we considered were two graphs containing $1000$ vertices and sampled from a stochastic block model. This sampling was performed using the \textit{graph-tool} software implementation \cite{peixoto2019bayesian}. The graphs in question contain one and four communities, and are displayed in Figures \ref{fig:erdos} and \ref{fig:blockmodel} respectively.

The $0$- and $1$-dimensional persistent diagrams corresponding to these graphs and computing using the proposed method are displayed in Figures \ref{fig:erdos_cliqueness_PD} and \ref{fig:blockmodel_cliqueness_PD} respectively. For each graph the corresponding $0$-dimensional persistent diagram contains a point of infinite persistence plus a set of points of finite persistence. The $0$-dimensional persistent diagram corresponding to the graph in Figure \ref{fig:erdos} contains no points of significant persistence other than the point with infinite persistence. On the other hand, the $0$-dimensional persistent diagram corresponding to the graph in Figure \ref{fig:blockmodel} contains three points of significant persistence other than the point with infinite persistence. This demonstrates that the proposed method accurately models the existence of one and four communities in the graphs and in turn accurately discriminates between the graphs.

The $0$- and $1$-dimensional persistent diagrams corresponding to the graphs in Figures \ref{fig:erdos} and \ref{fig:blockmodel} and computing using the method which computes a \textit{clique complex filtration} are displayed in Figures \ref{fig:erdos_clique_PD} and \ref{fig:blockmodel_clique_PD} respectively. For each graph the corresponding $0$-dimensional persistent diagram contains a point of finite persistence for each individual graph vertex plus one point of infinite persistence. This demonstrates that this method does not accurately discriminate between the graphs.

The $0$- and $1$-dimensional persistent diagrams corresponding to the graphs in Figures \ref{fig:erdos} and \ref{fig:blockmodel} and computing using the method which computes a \textit{power complex filtration} are displayed in Figures \ref{fig:erdos_rips_PD} and \ref{fig:blockmodel_rips_PD} respectively. Similar to the previous discussion, for each graph the corresponding $0$-dimensional persistent diagram contains a point of finite persistence for each individual graph vertex plus one point of finite persistence. This demonstrates that this method also does not accurately discriminate between the graphs.

\begin{figure}
\begin{center}
\subfigure[]{\includegraphics[height=3cm]{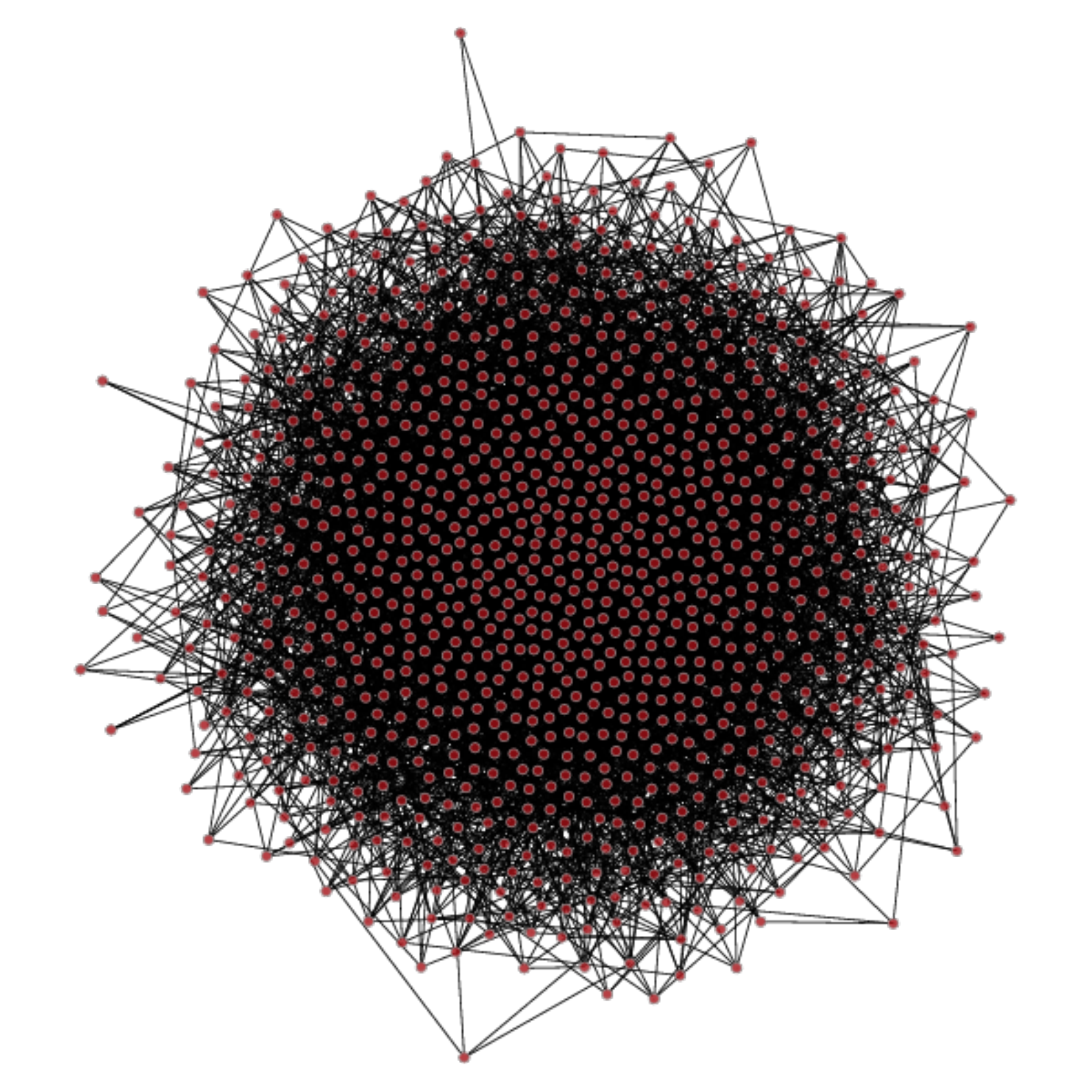}
\label{fig:erdos}}
\hspace{.1cm}
\subfigure[]{\includegraphics[height=2.5cm]{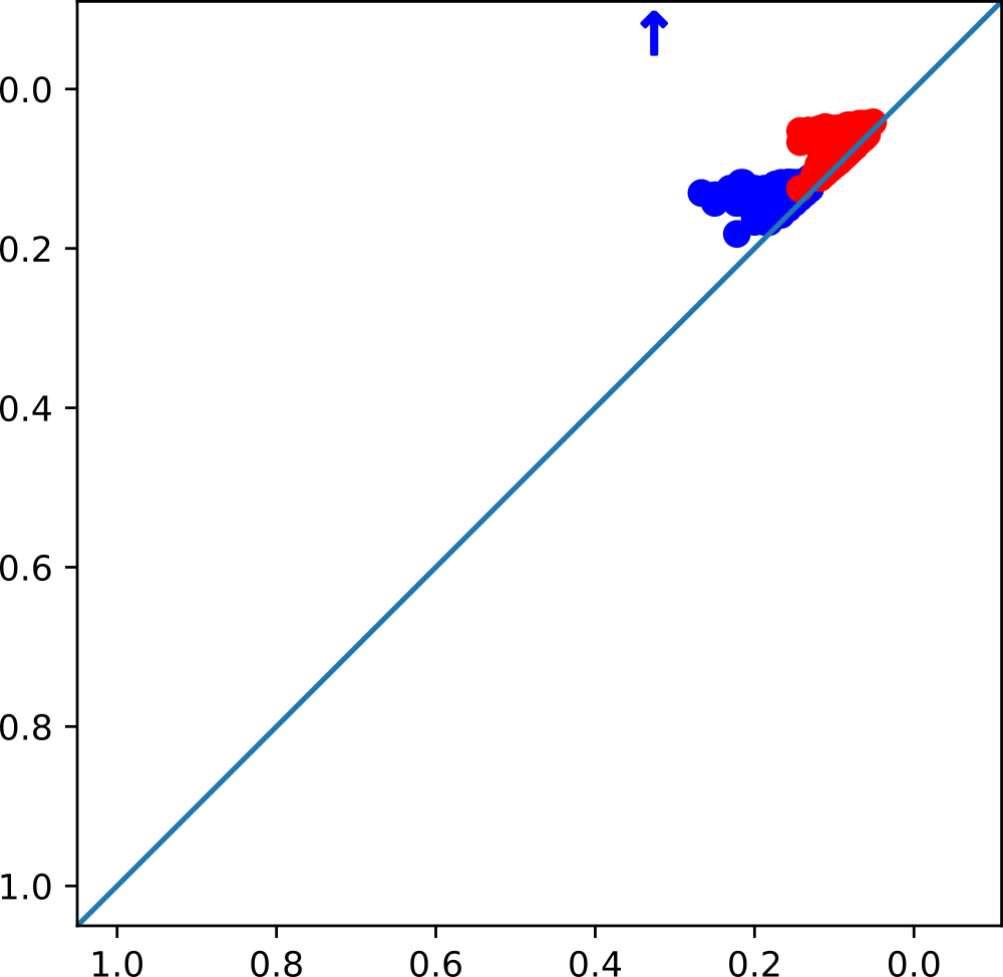}
\label{fig:erdos_cliqueness_PD}}
\hspace{.1cm}
\subfigure[]{\includegraphics[height=2.5cm]{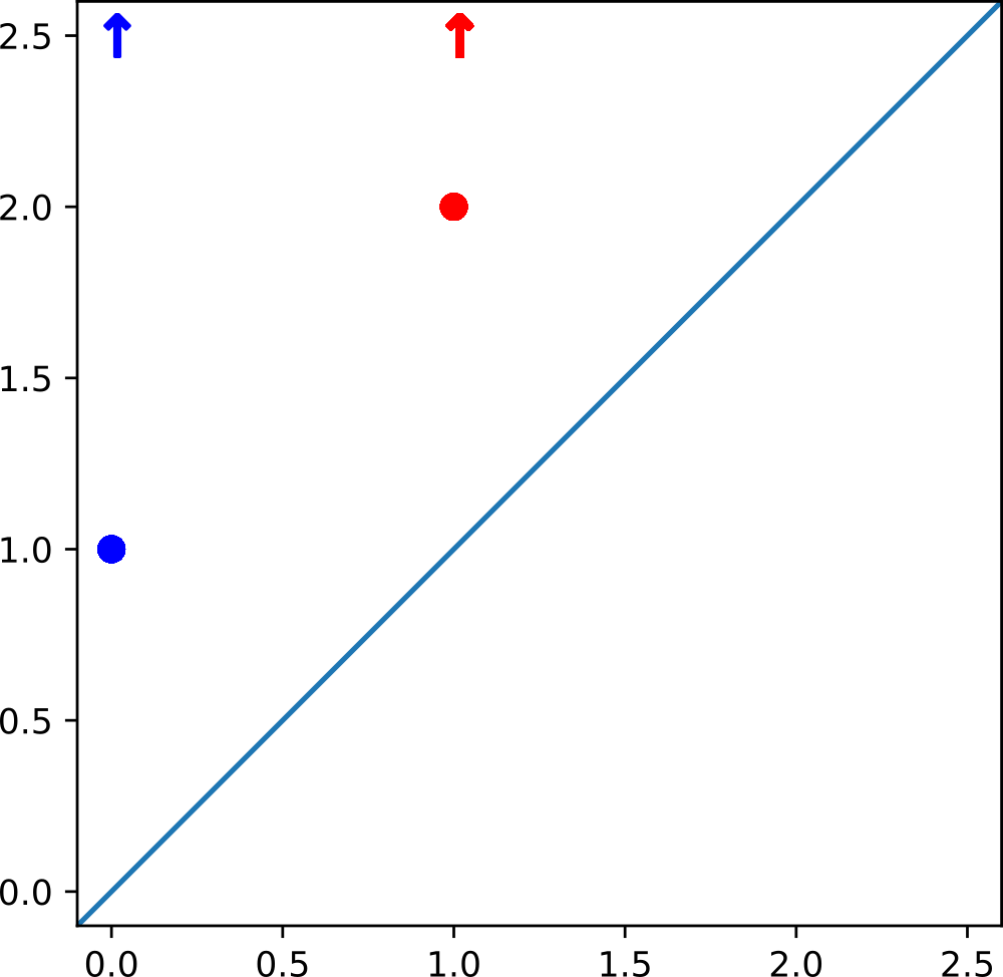}
\label{fig:erdos_clique_PD}}
\hspace{.1cm}
\subfigure[]{\includegraphics[height=2.5cm]{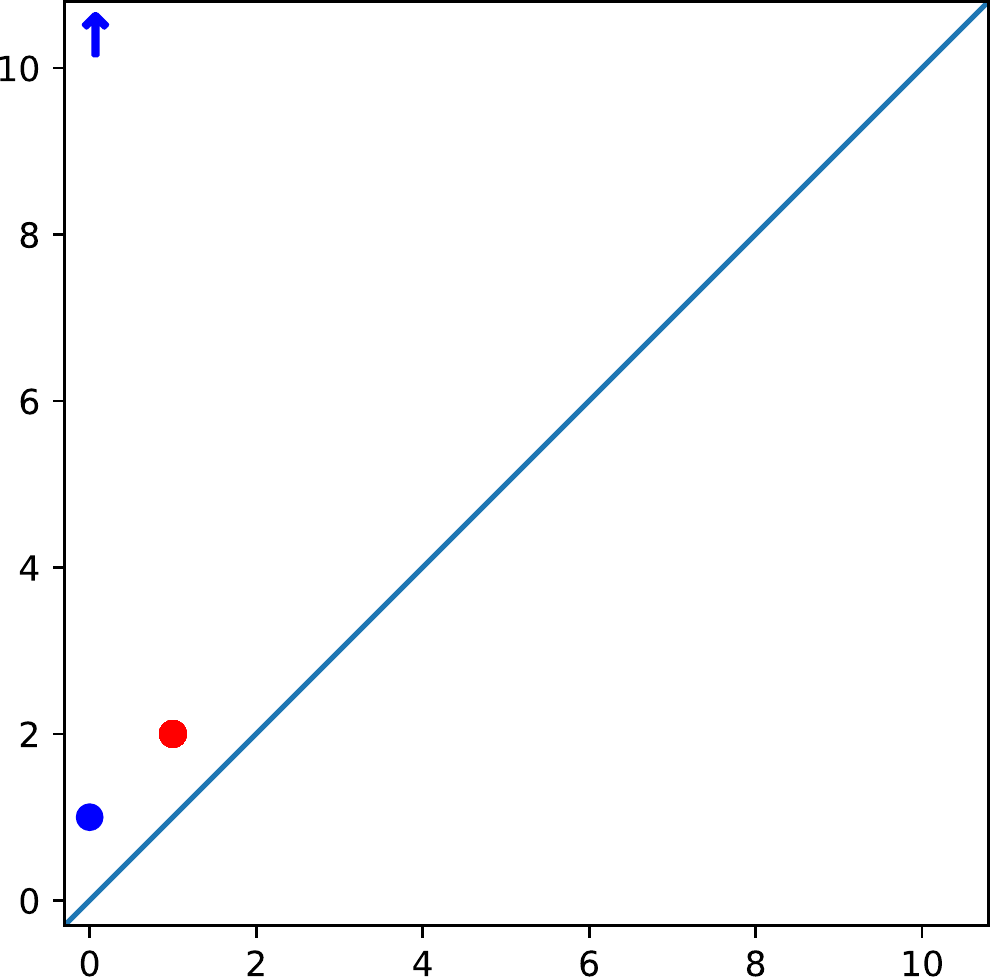}
\label{fig:erdos_rips_PD}}
\\
\subfigure[]{\includegraphics[height=3cm]{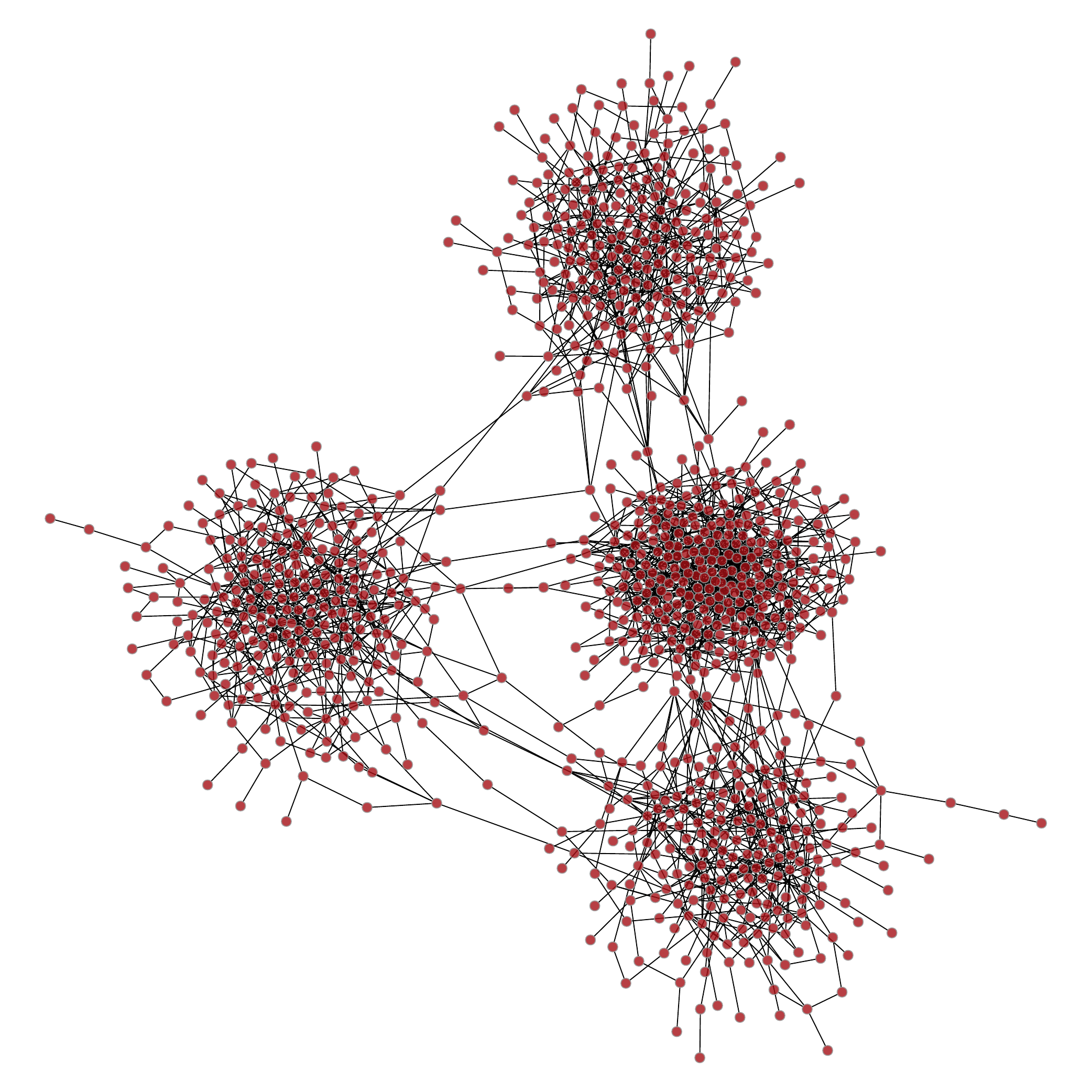}
\label{fig:blockmodel}}
\hspace{.1cm}
\subfigure[]{\includegraphics[height=2.5cm]{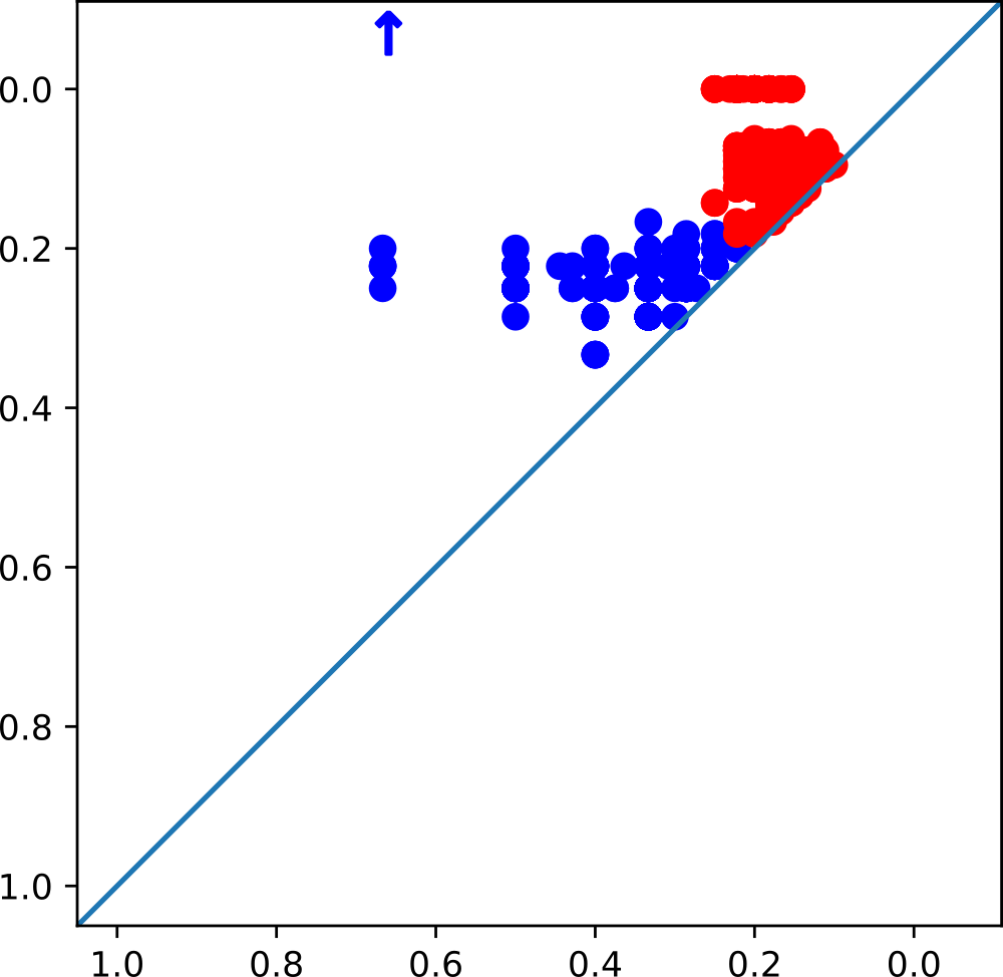}
\label{fig:blockmodel_cliqueness_PD}}
\hspace{.1cm}
\subfigure[]{\includegraphics[height=2.5cm]{images/blockmodel_clique_PD}
\label{fig:blockmodel_clique_PD}}
\hspace{.1cm}
\subfigure[]{\includegraphics[height=2.5cm]{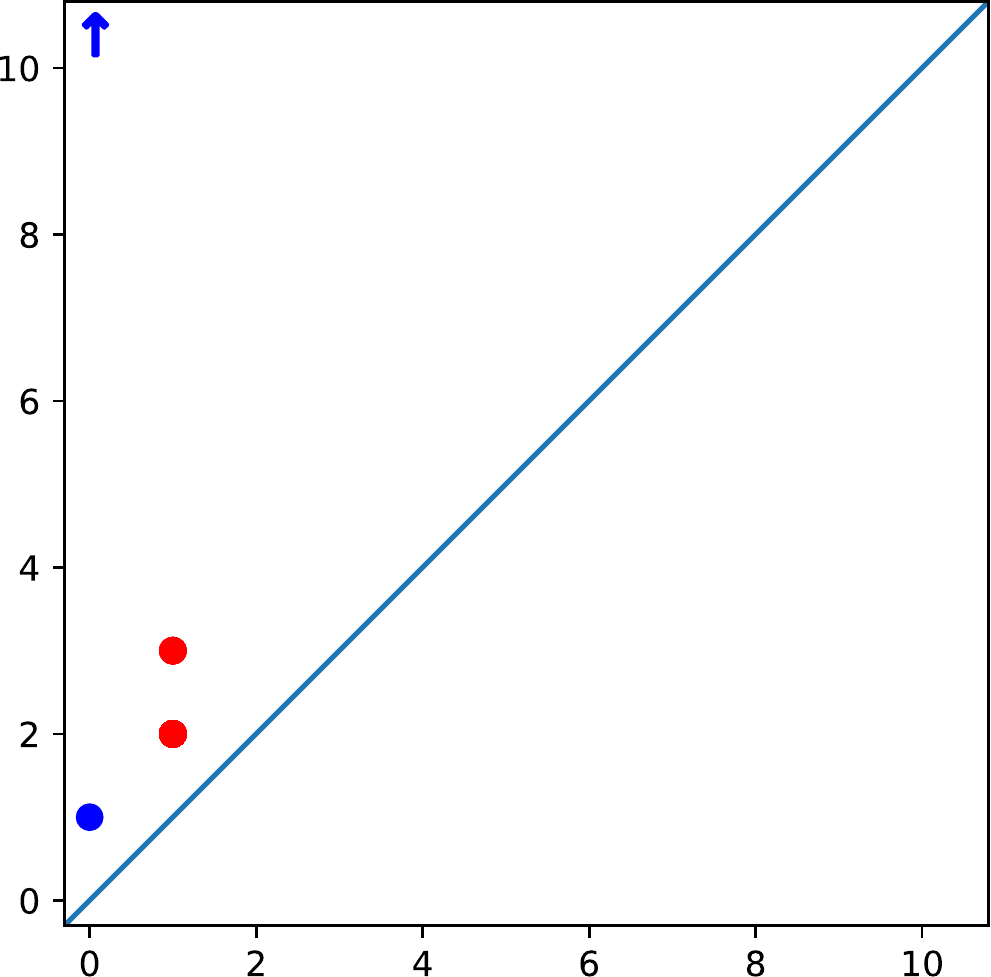}
\label{fig:blockmodel_rips_PD}}
\caption{The graphs in (a) and (e) were sampled from a stochastic block model and contain one and four communities respectively. The $0$- and $1$-dimensional persistent diagrams computed using the proposed method are displayed in (b) and (f) respectively. The $0$- and $1$-dimensional persistent diagrams computed using a \textit{clique complex filtration} are displayed in (c) and (g) respectively. The $0$- and $1$-dimensional persistent diagrams computed using a \textit{power complex filtration} are displayed in (d) and (h) respectively.}
\label{fig:random_community_graphs}
\end{center}
\end{figure}

A random geometric graph is a graph where the vertices in question are random points in a metric space and two vertices are connected by an edge if and only if the distance between the vertices in question is less than a specified value. The second random graphs we considered were two random geometric graphs. The first graph was constructed by randomly sampling $200$ vertices on the unit circle embedded in the $\mathbb{R}^2$ and connecting two vertices by an edge if and only if the distance between the corresponding vertices was less than or equal to $0.25$. The graph in question is displayed in Figure \ref{fig:geometric_graph_circle_no_cross}. The mean vertex degree of this graph is $16.6$ with a standard deviation of  $0.3$. This demonstrates that the graph consists of a single densely connected cycle. The second random geometric graph equals the previous graph plus an additional edge between two randomly selected vertices. The graph in question is displayed in Figure \ref{fig:geometric_graph_circle_cross}. This additional edge creates an additional cycle in the graph which is less densely connected than the previous cycle.

The $0$- and $1$-dimensional persistent diagrams corresponding to these graphs and computing using the proposed method are displayed in Figures \ref{fig:geometric_graph_circle_no_cross_cliqueness} and \ref{fig:geometric_graph_circle_cross_cliqueness} respectively. For each graph the corresponding $1$-dimensional persistent diagram contains one and two points of finite persistence respectively. The additional point in the latter persistent diagram has relatively small persistence and is a consequence of the less densely connected cycle. This accurately models the fact that the corresponding cycle is less significant and demonstrates the stability of the method.

The $0$- and $1$-dimensional persistent diagrams corresponding to the graphs in Figures \ref{fig:geometric_graph_circle_no_cross} and \ref{fig:geometric_graph_circle_cross} and computing using the method which computes a \textit{clique complex filtration} are displayed in Figures \ref{fig:geometric_graph_circle_no_cross_clique} and \ref{fig:geometric_graph_circle_cross_clique} respectively. For each graph the corresponding $1$-dimensional persistent diagram contains one and two points of infinite persistence respectively. The additional point in the latter persistent diagram is a consequence of the less densely connected cycle. The infinite persistence of this point does not accurately model the fact that the corresponding cycle is less significant and demonstrates the instability of the method.

The $0$- and $1$-dimensional persistent diagrams corresponding to the graphs in Figures \ref{fig:geometric_graph_circle_no_cross} and \ref{fig:geometric_graph_circle_cross} and computing using the method which computes a \textit{power complex filtration} are displayed in Figures \ref{fig:geometric_graph_circle_no_cross_power} and \ref{fig:geometric_graph_circle_cross_power} respectively. For each graph the corresponding $1$-dimensional persistent diagram contains one and two points of finite persistence respectively. The two points in the latter diagram have equal coordinates and in turn persistence; this does not accurately model the fact that one cycle is less significant and demonstrates the instability of the method.

% adj = circle_geometric_graph.circle_geometric_graph(); use line g.add_edge(15, 136) to add cross.
% g = graph_clique(adj)
% g = graph_vietoris_rips(adj)
% g = graph_cliqueness(adj)
\begin{figure}
\begin{center}
\subfigure[]{\includegraphics[height=2.5cm]{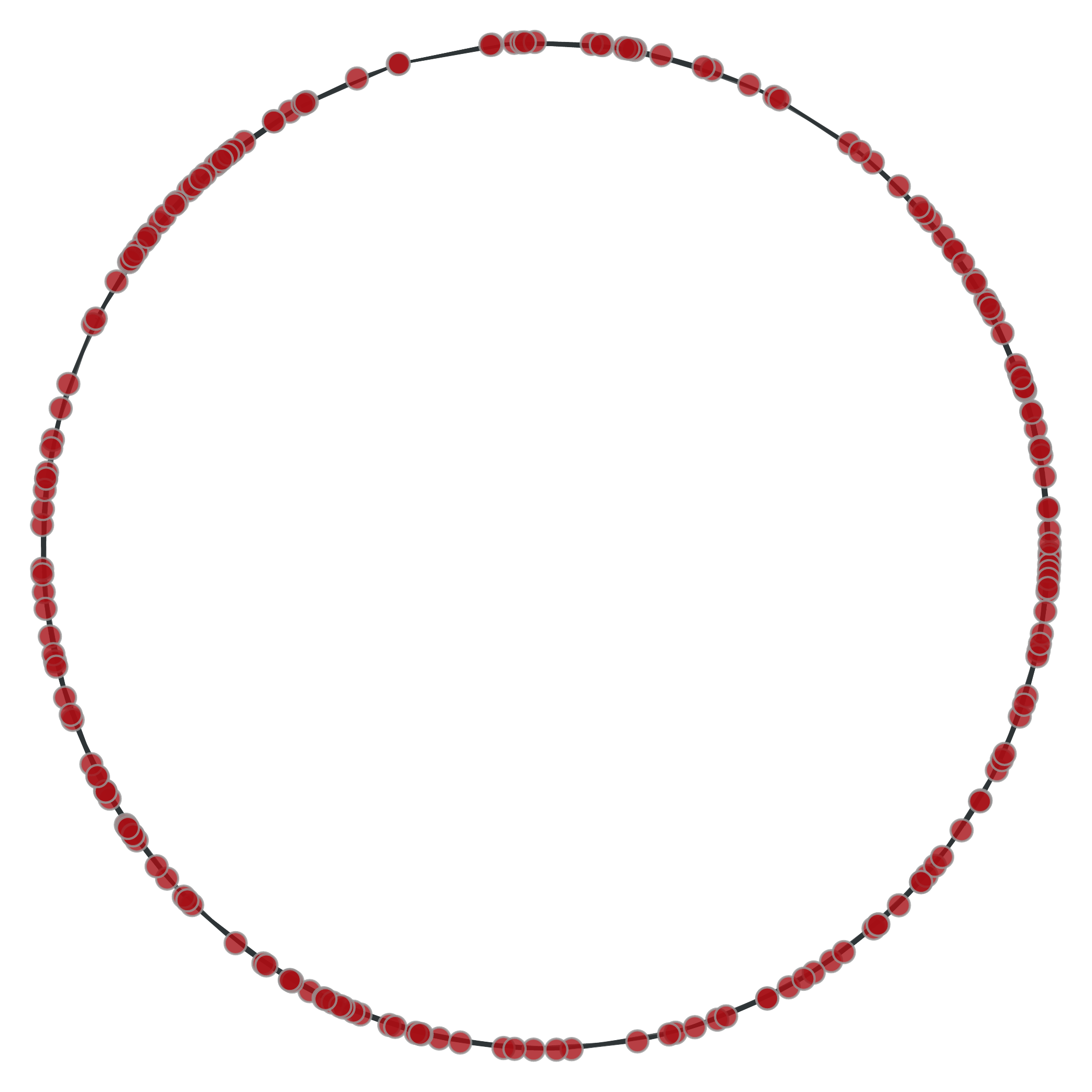}
\label{fig:geometric_graph_circle_no_cross}}
\hspace{.1cm}
\subfigure[]{\includegraphics[height=2.5cm]{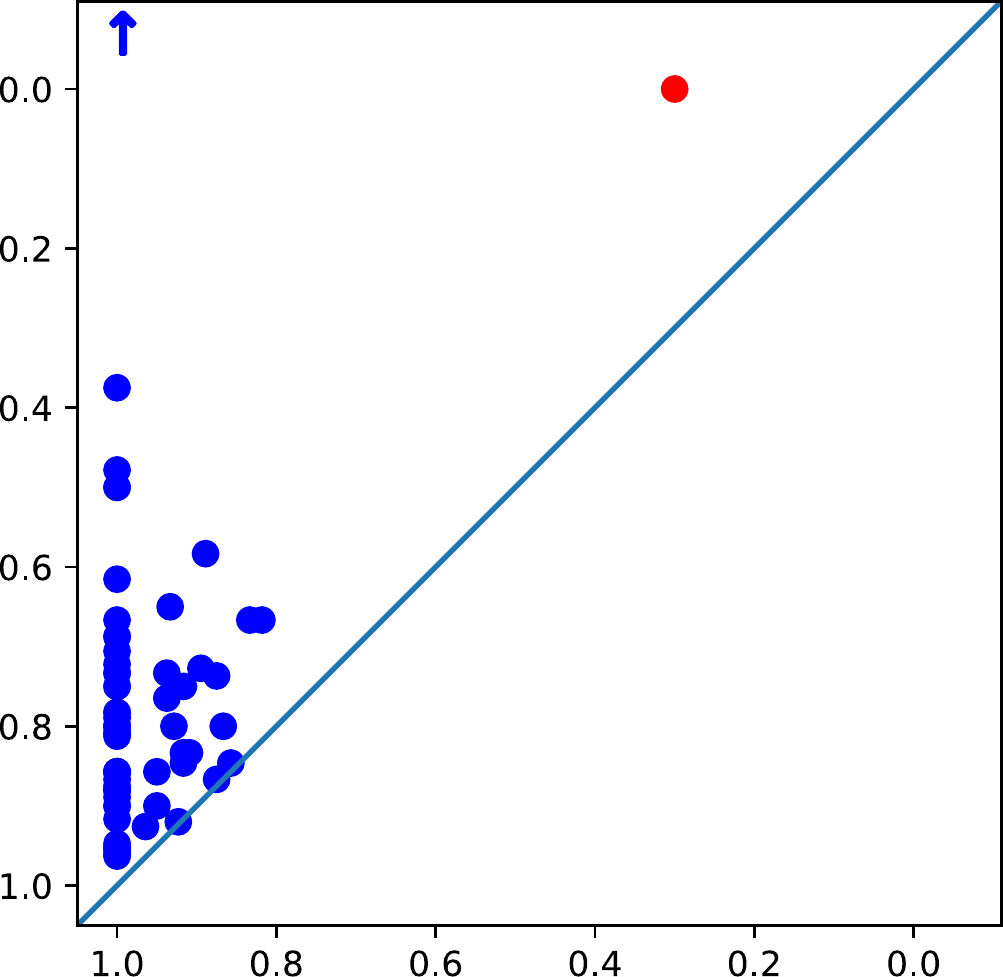}
\label{fig:geometric_graph_circle_no_cross_cliqueness}}
\hspace{.1cm}
\subfigure[]{\includegraphics[height=2.5cm]{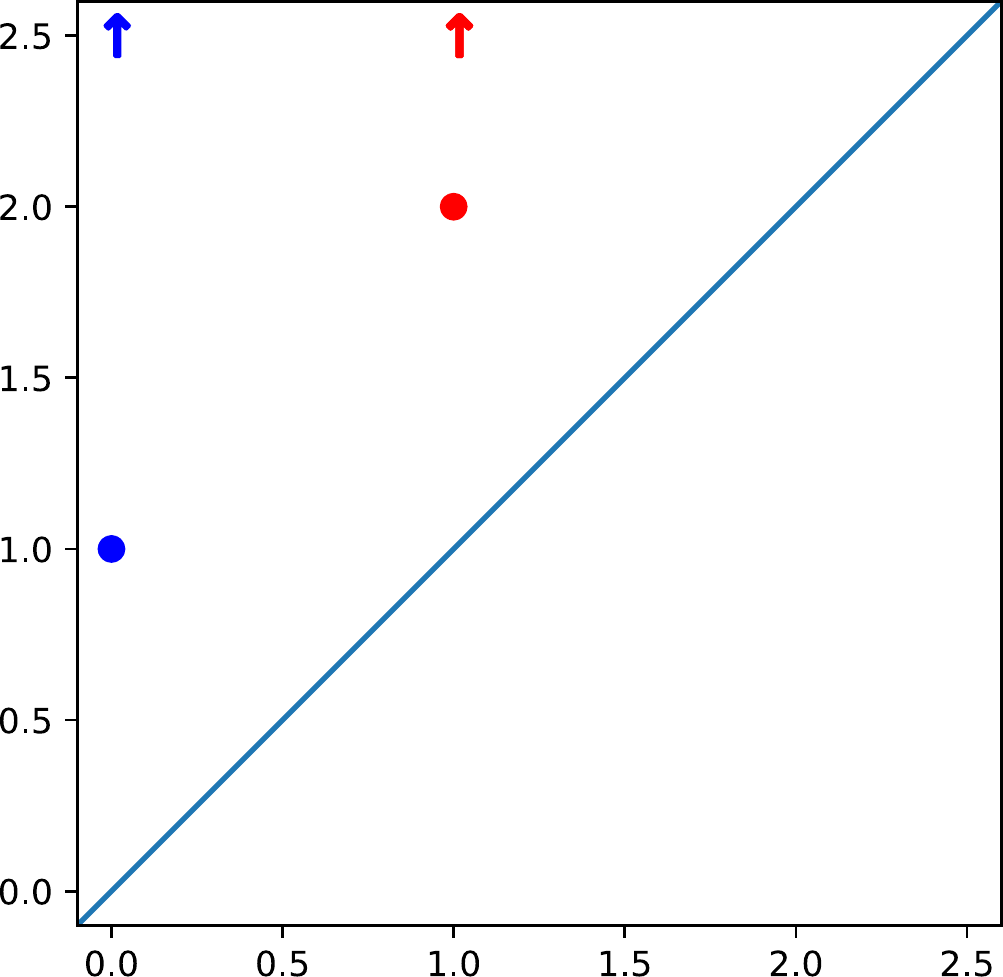}
\label{fig:geometric_graph_circle_no_cross_clique}}
\hspace{.1cm}
\subfigure[]{\includegraphics[height=2.5cm]{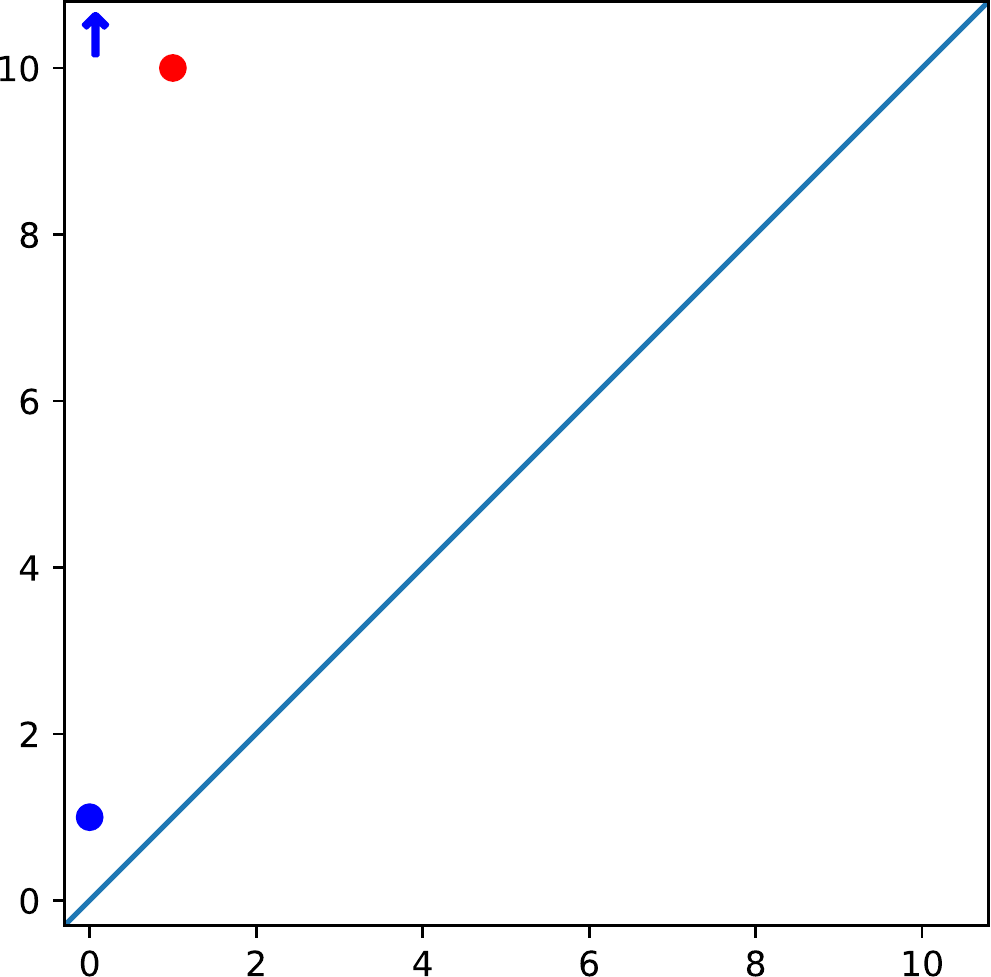}
\label{fig:geometric_graph_circle_no_cross_power}}
\\
\subfigure[]{\includegraphics[height=2.5cm]{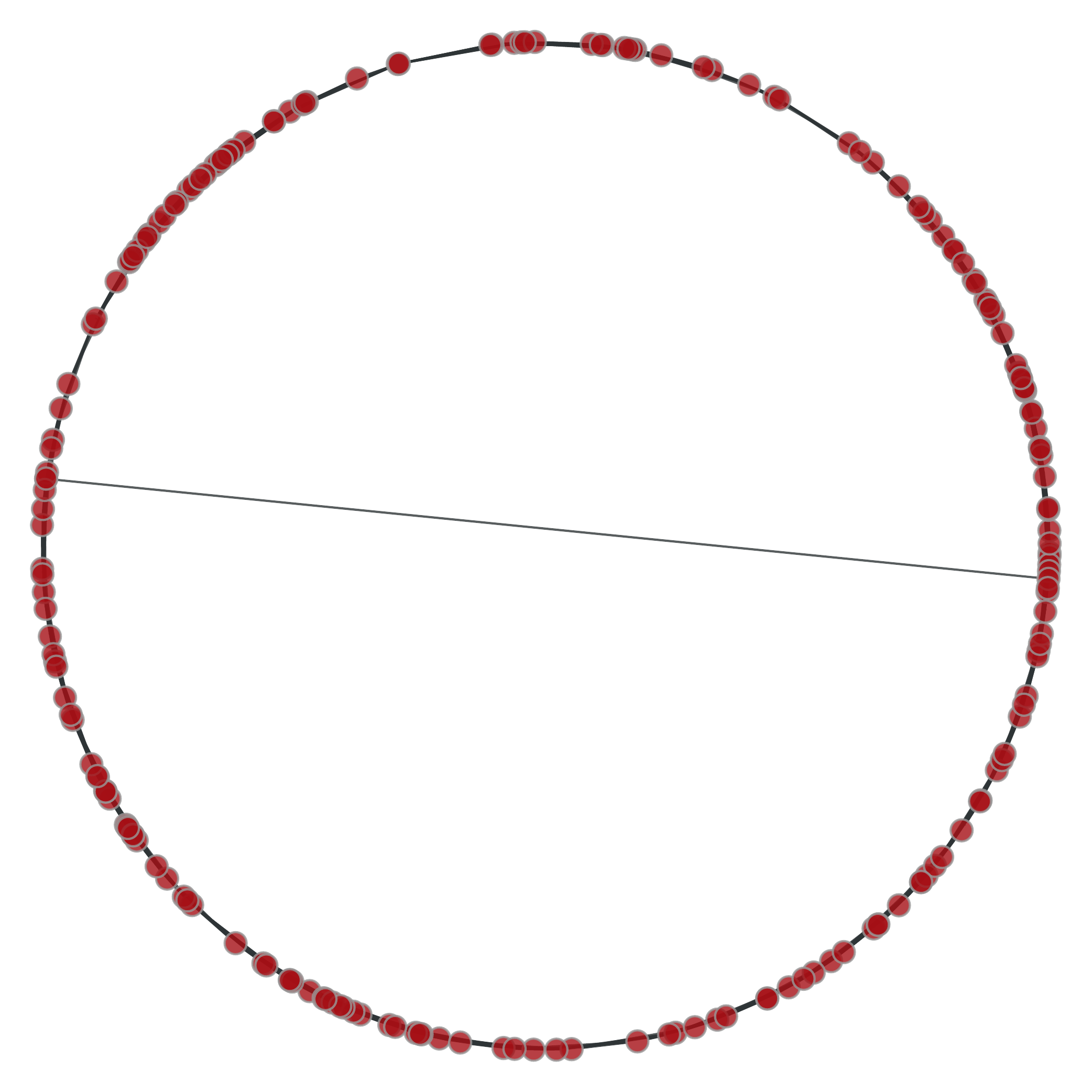}
\label{fig:geometric_graph_circle_cross}}
\hspace{.1cm}
\subfigure[]{\includegraphics[height=2.5cm]{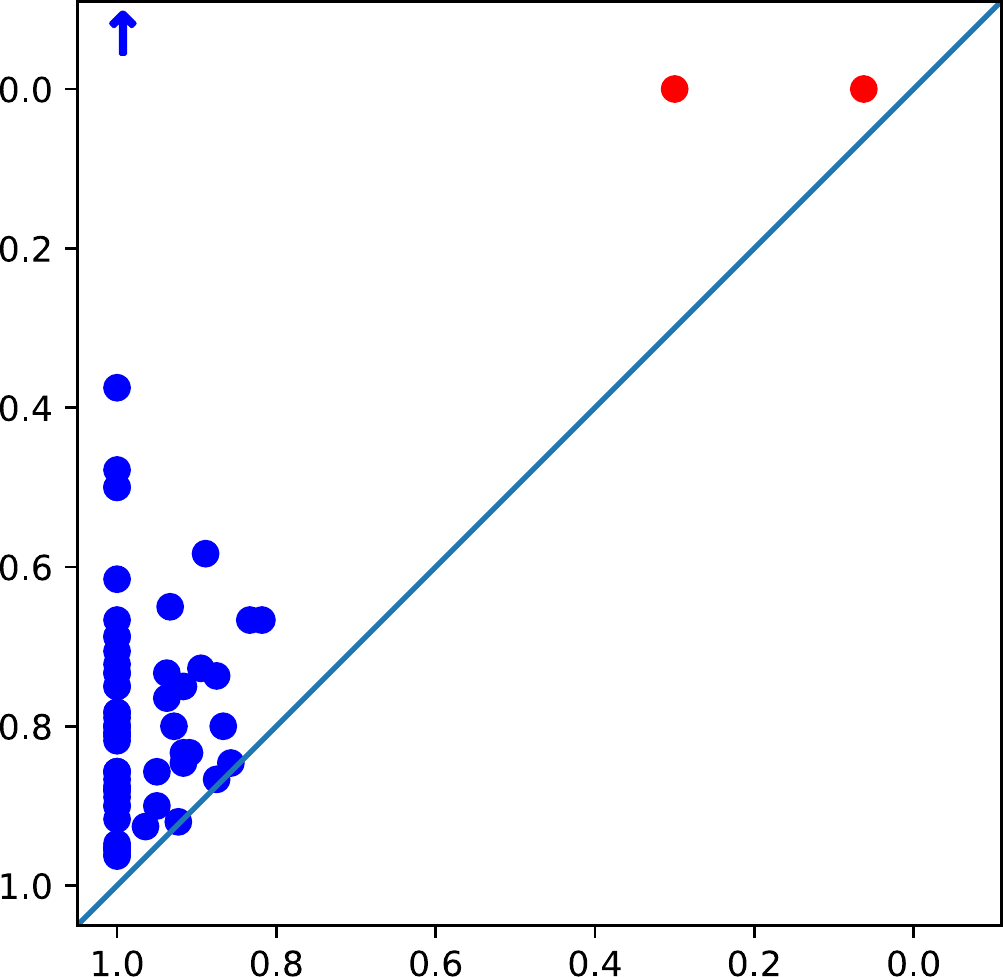}
\label{fig:geometric_graph_circle_cross_cliqueness}}
\hspace{.1cm}
\subfigure[]{\includegraphics[height=2.5cm]{images/geometric_graph_circle_cross_clique}
\label{fig:geometric_graph_circle_cross_clique}}
\hspace{.1cm}
\subfigure[]{\includegraphics[height=2.5cm]{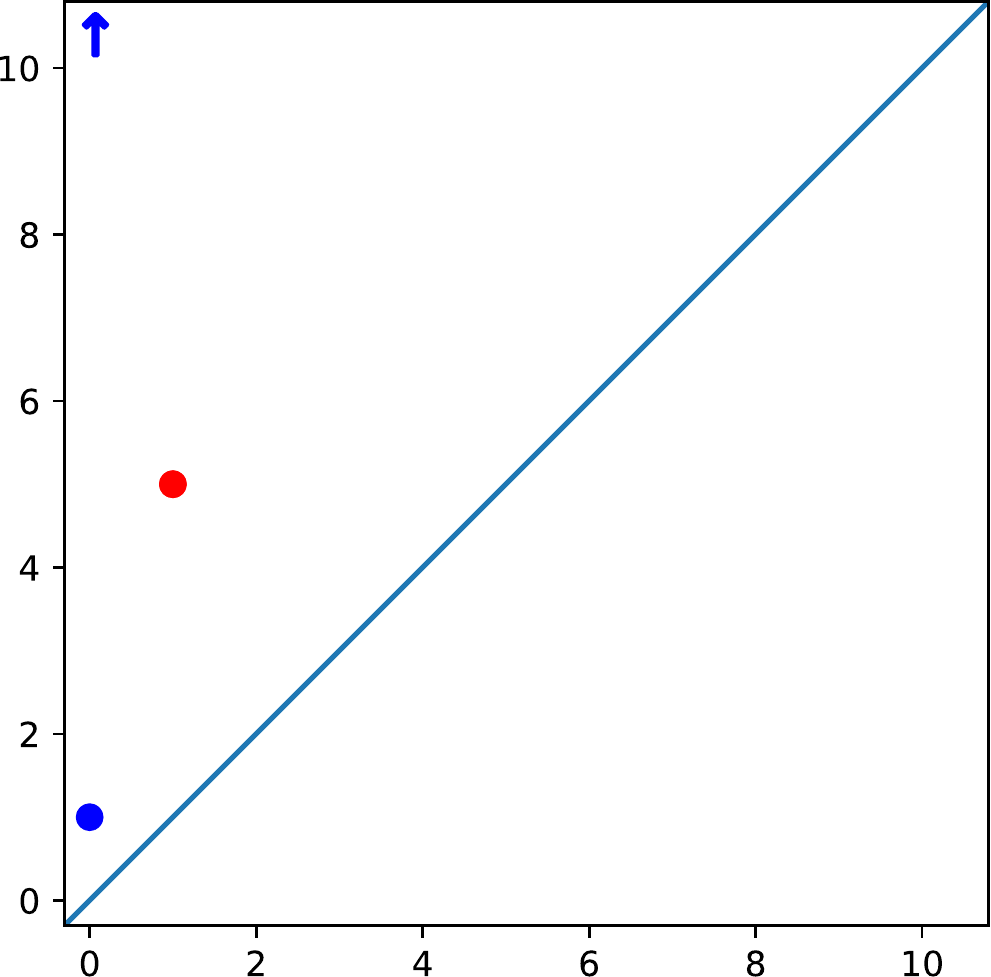}
\label{fig:geometric_graph_circle_cross_power}}
\caption{The graph in (a) is a random geometric graph in $\mathbb{R}^2$ where the vertices are randomly selected points on the unit circle. The graph in (e) is equal to that in (a) but an additional random edge. The $0$- and $1$-dimensional persistent diagrams computed using the proposed method are displayed in (b) and (f) respectively. The $0$- and $1$-dimensional persistent diagrams computed using a \textit{clique complex filtration} are displayed in (c) and (g) respectively. The $0$- and $1$-dimensional persistent diagrams computed using a \textit{power complex filtration} are displayed in (d) and (h) respectively.}
\label{fig:geometric_random_graphs}
\end{center}
\end{figure}

\subsection{Analysis of Real Graphs}
\label{sec:applications:real}
In our analysis we considered three real graphs. The first graph is the Zachary karate club social network \cite{zachary1977information}. In this graph a vertex corresponds to a member of a university karate club and an edge corresponds to a tie between two members. The graph contains 34 vertices and 78 edges. The second graph is a social network of bottlenose dolphins \cite{lusseau2003bottlenose}. In this graph a vertex corresponds to a bottlenose dolphin living of Doubtful Sound, a fjord in New Zealand, and observed between 1994 and 2001. An edge corresponds to a frequent association between two bottlenose dolphins. The graph contains 62 vertices and 159 edges. The third graph is a protein interaction network contained in yeast \cite{konect:coulomb2005}. In this graph a vertex corresponds to a protein and an edge corresponds to a metabolic interaction between two proteins. In our analysis we considered the largest connected component in the graph which contains 1,458 vertices and 1,993 edges. The three graphs described above are displayed in Figures \ref{fig:karate_network}, \ref{fig:dolphins_network} and \ref{fig:protein_network} respectively.

The $0$- and $1$-dimensional persistent diagrams corresponding to these graphs and computing using the proposed method are displayed in Figures \ref{fig:karate_cliqueness}, \ref{fig:dolphins_cliqueness} and \ref{fig:protein_cliqueness} respectively. For each graph the corresponding $0$-dimensional persistent diagram contains a number of points with significant persistence indicating the existence of multiple community structures. It is well known that the first two graphs contain such structures and in fact these graphs are commonly used to evaluate community detection algorithms. For the last graph the corresponding $1$-dimensional persistent diagram contains a number of points with significant persistence indicating the existence of densely connected cycles in the graph.

The $0$- and $1$-dimensional persistent diagrams corresponding to the graphs in Figures \ref{fig:karate_network}, \ref{fig:dolphins_network} and \ref{fig:protein_network} and computing using the method which computes a \textit{clique complex filtration} are displayed in Figures \ref{fig:karate_clique}, \ref{fig:dolphins_clique} and \ref{fig:protein_clique} respectively. For each graph the corresponding $0$-dimensional persistent diagram contains one point of significant persistence which does not indicate the existence of multiple communities in the graph. As demonstrated by our analysis of the graphs in Figures \ref{fig:random_community_graphs} this method does not accurately model communities given edges between communities. Consequently this does not mean that multiple communities do not exist in the graph. For each graph the corresponding $1$-dimensional persistent diagrams contain multiple points of significant persistence. As demonstrated by our analysis of the graphs in Figure \ref{fig:geometric_random_graphs}, this method does not discriminate between sparsely and densely connected cycles in the graph. Consequently this  means we cannot determine if the cycles in question are densely connected and in turn significant.

The $0$- and $1$-dimensional persistent diagrams corresponding to the graphs in Figures \ref{fig:karate_network}, \ref{fig:dolphins_network} and \ref{fig:protein_network} and computing using the method which computes a \textit{clique complex filtration} are displayed in Figures \ref{fig:karate_rips}, \ref{fig:dolphins_rips} and \ref{fig:protein_rips} respectively. For each graph the corresponding $0$-dimensional persistent diagram contains one point of significant persistence which does not indicate the existence of multiple communities in the graph. For the same reasons as above this does not mean that multiple communities do not exist in the graph. For the graph in Figure \ref{fig:protein_network} the corresponding $1$-dimensional persistent diagram contains multiple points of significant persistence. For the same reasons as above we cannot determine if the cycles in question are densely connected and in turn significant.

\begin{figure}
\begin{center}
\subfigure[]{\includegraphics[height=2.3cm]{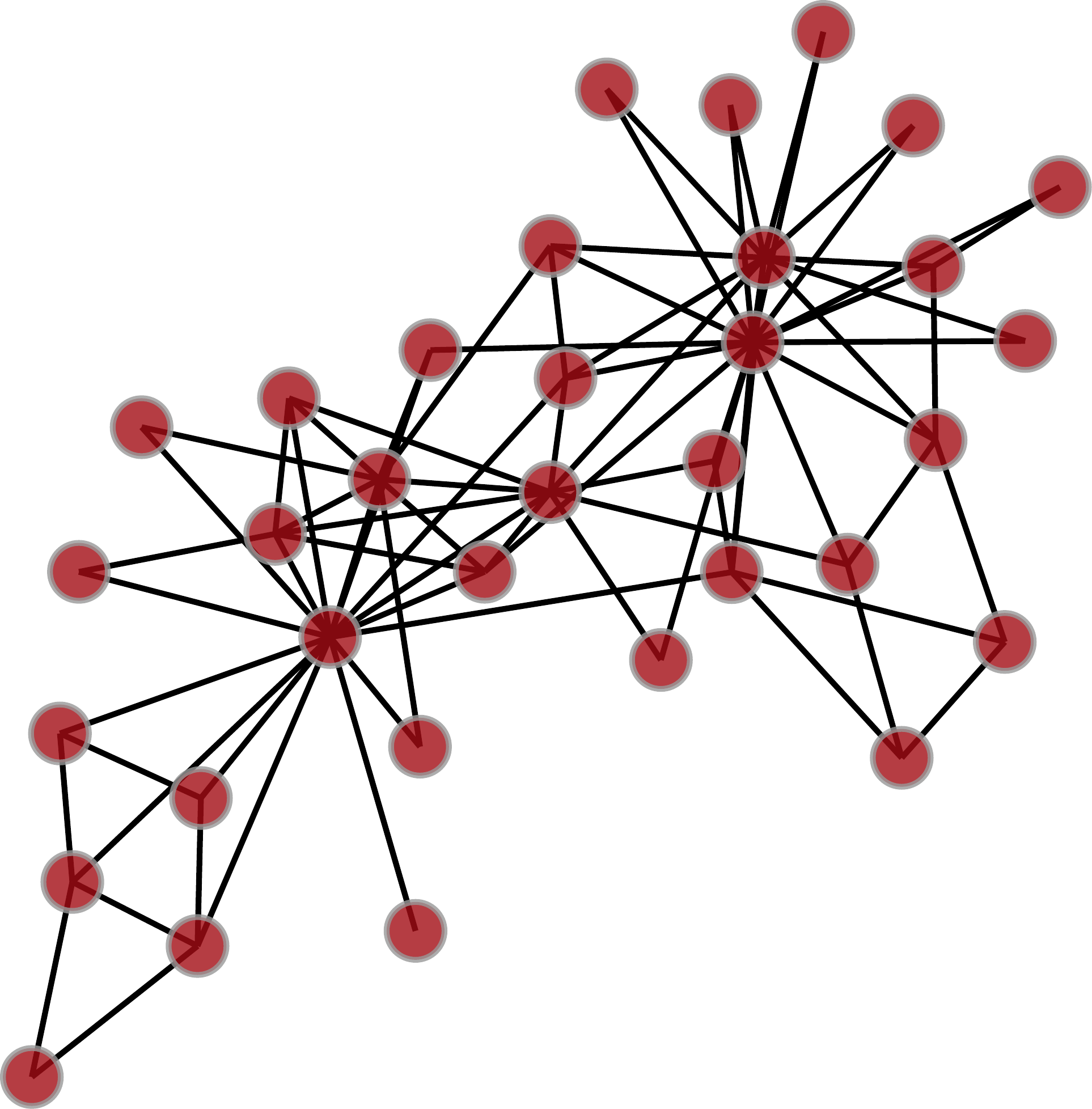}
\label{fig:karate_network}}
\hspace{.1cm}
\subfigure[]{\includegraphics[height=2.5cm]{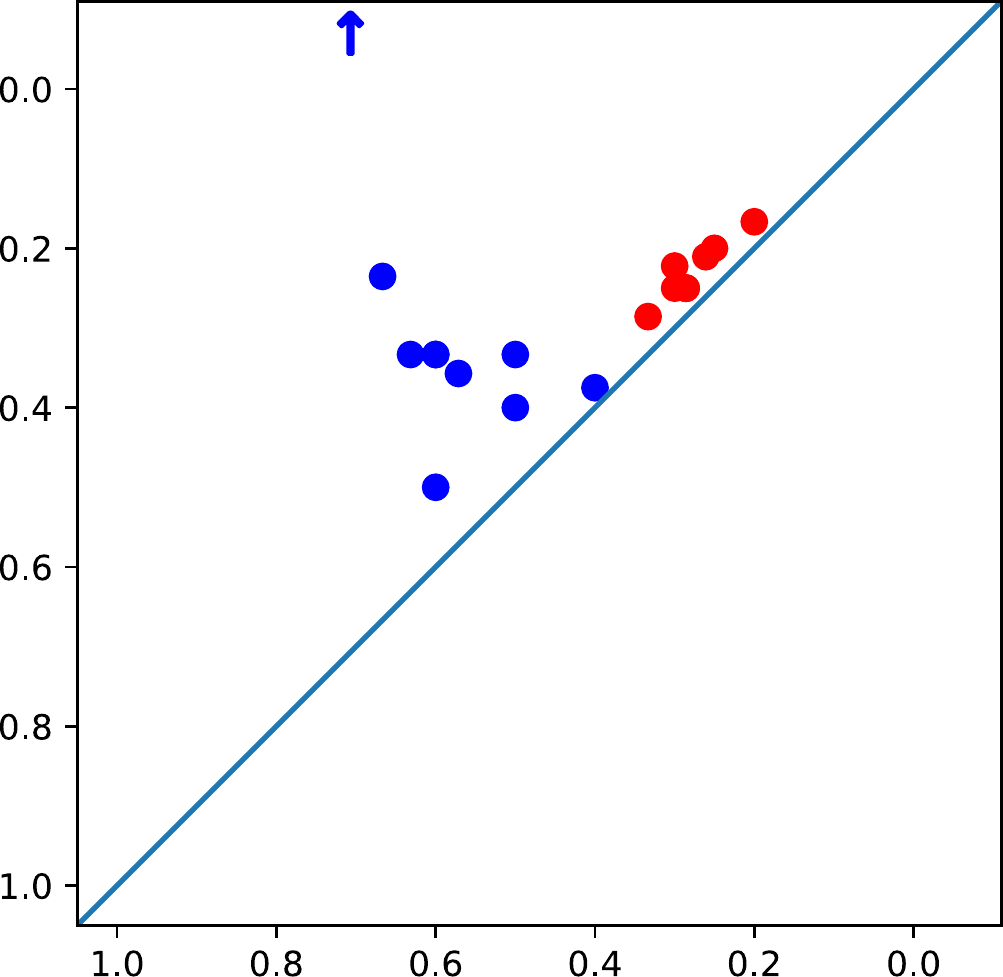}
\label{fig:karate_cliqueness}}
\hspace{.1cm}
\subfigure[]{\includegraphics[height=2.5cm]{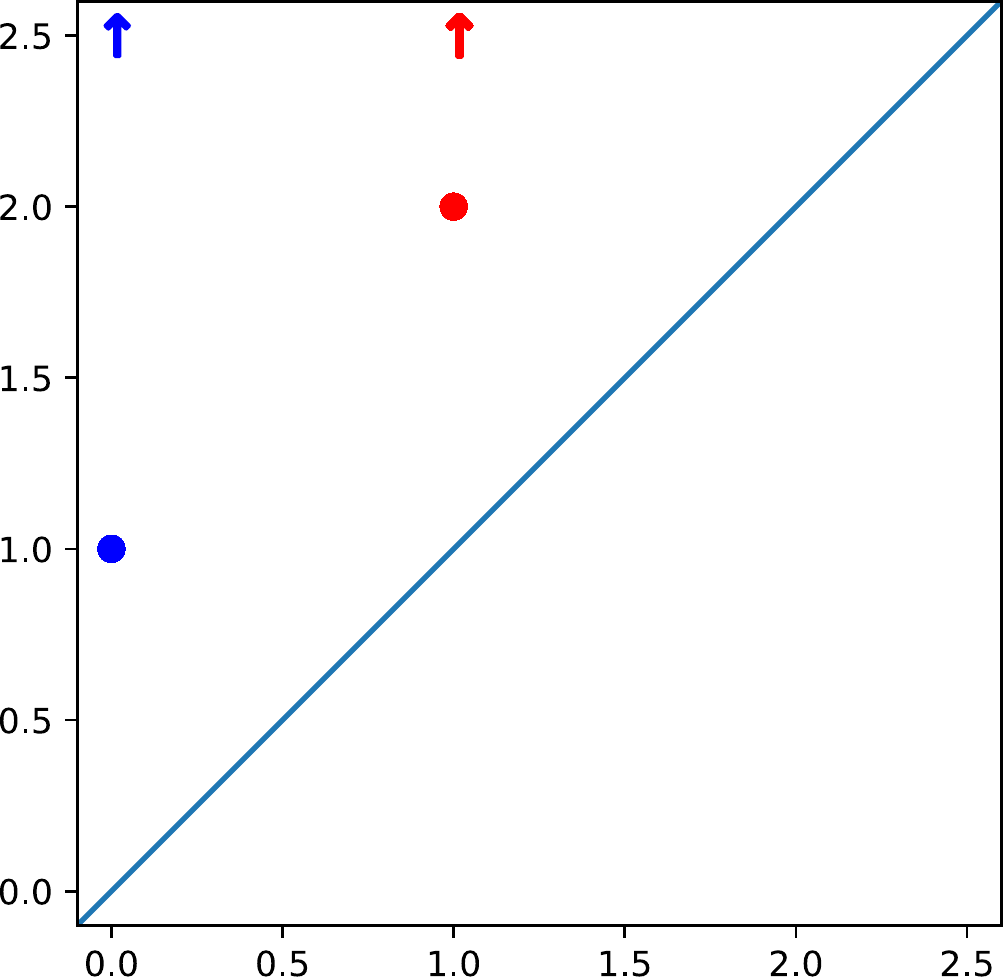}
\label{fig:karate_clique}}
\hspace{.1cm}
\subfigure[]{\includegraphics[height=2.5cm]{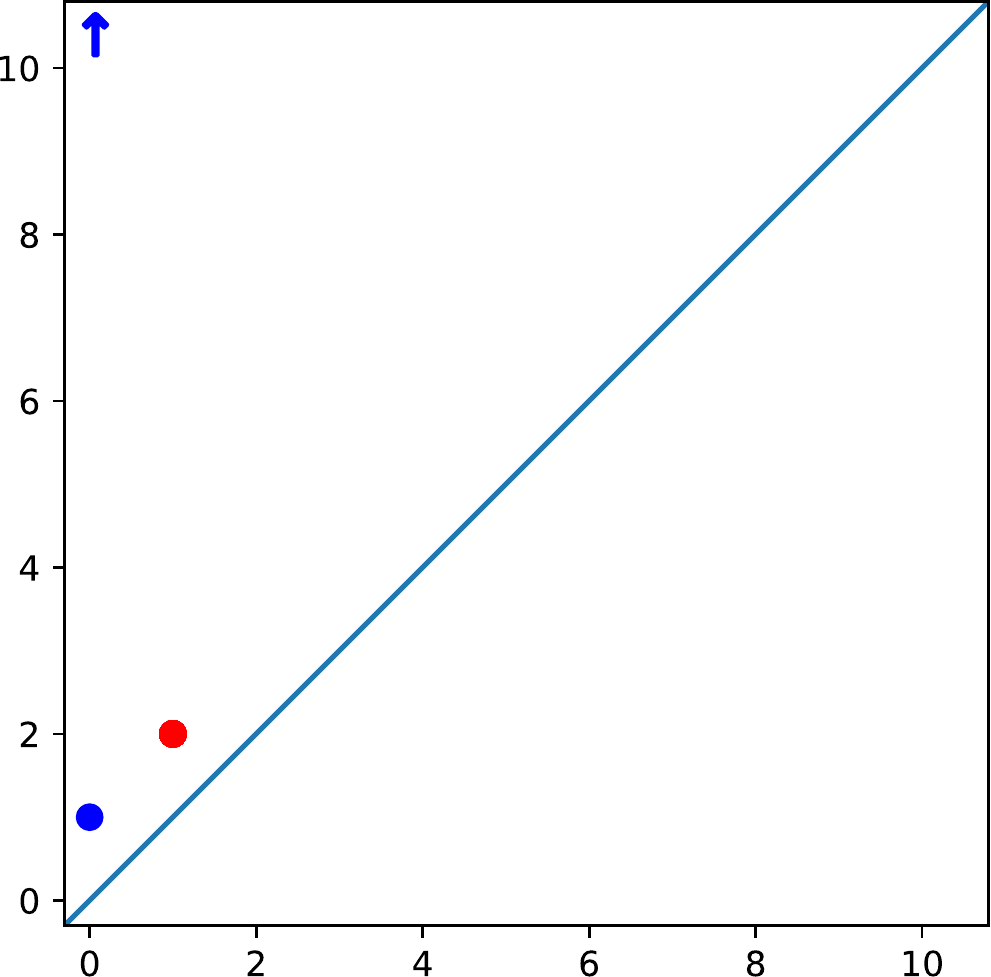}
\label{fig:karate_rips}}
\\
\subfigure[]{\includegraphics[height=2.5cm]{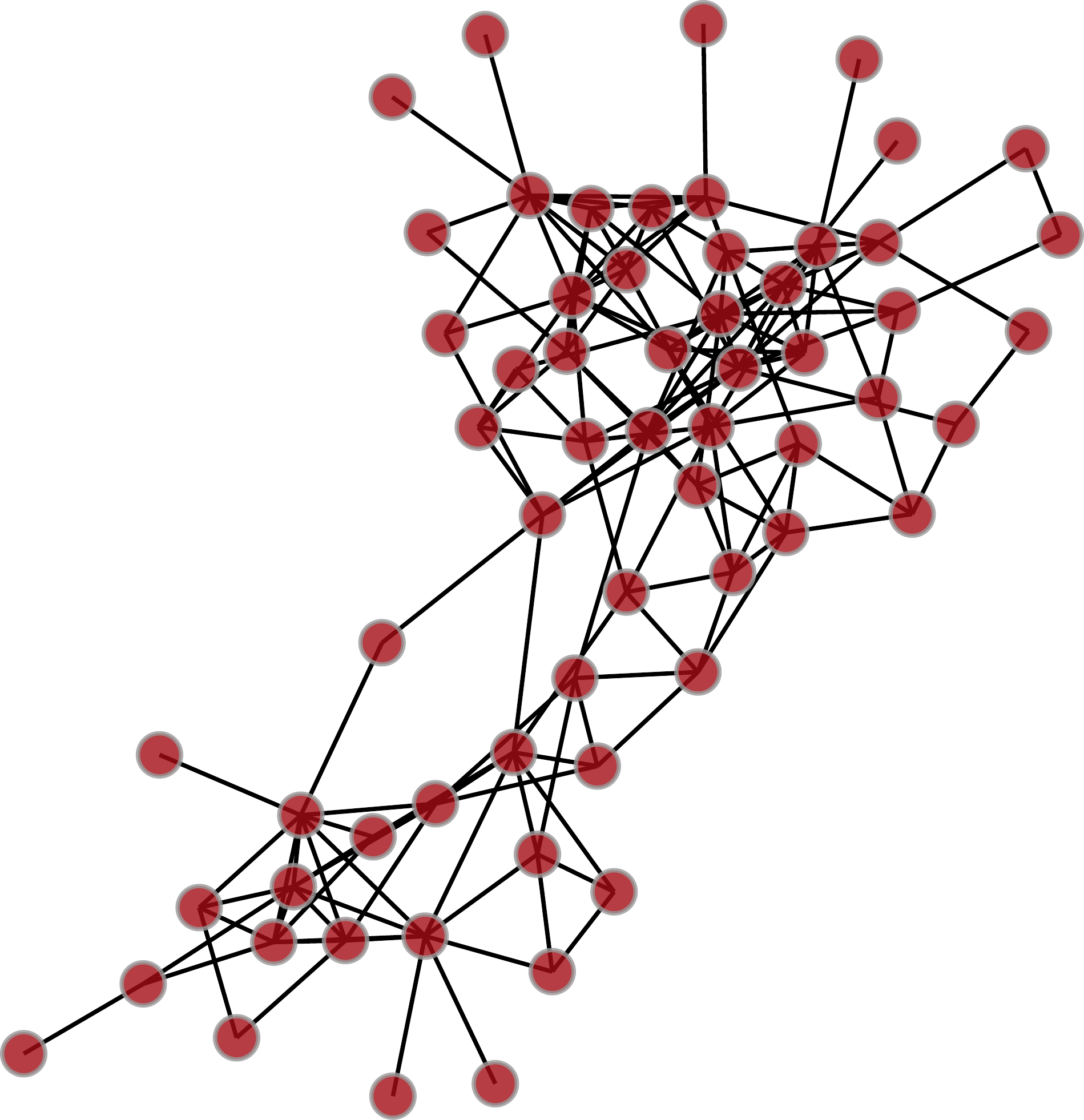}
\label{fig:dolphins_network}}
\hspace{.1cm}
\subfigure[]{\includegraphics[height=2.5cm]{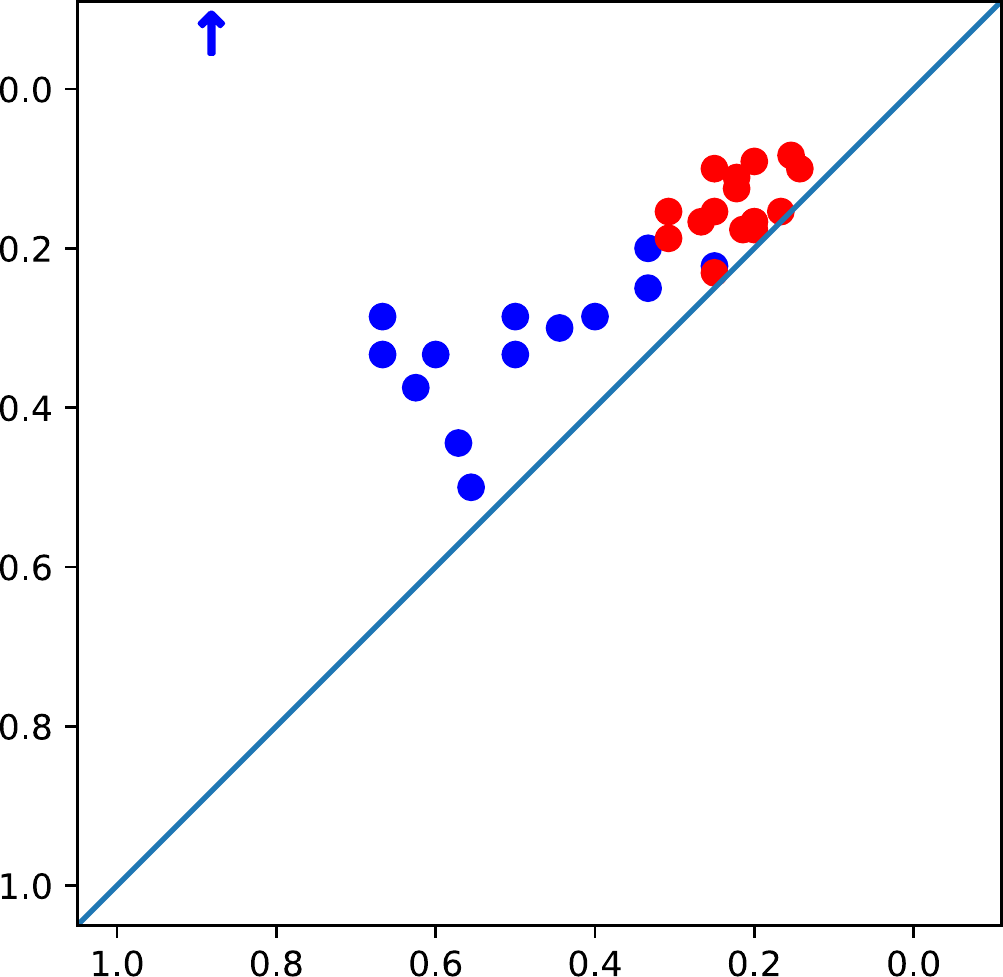}
\label{fig:dolphins_cliqueness}}
\hspace{.1cm}
\subfigure[]{\includegraphics[height=2.5cm]{images/dolphins_clique}
\label{fig:dolphins_clique}}
\hspace{.1cm}
\subfigure[]{\includegraphics[height=2.5cm]{images/dolphins_rips}
\label{fig:dolphins_rips}}
\\
\subfigure[]{\includegraphics[height=2.3cm]{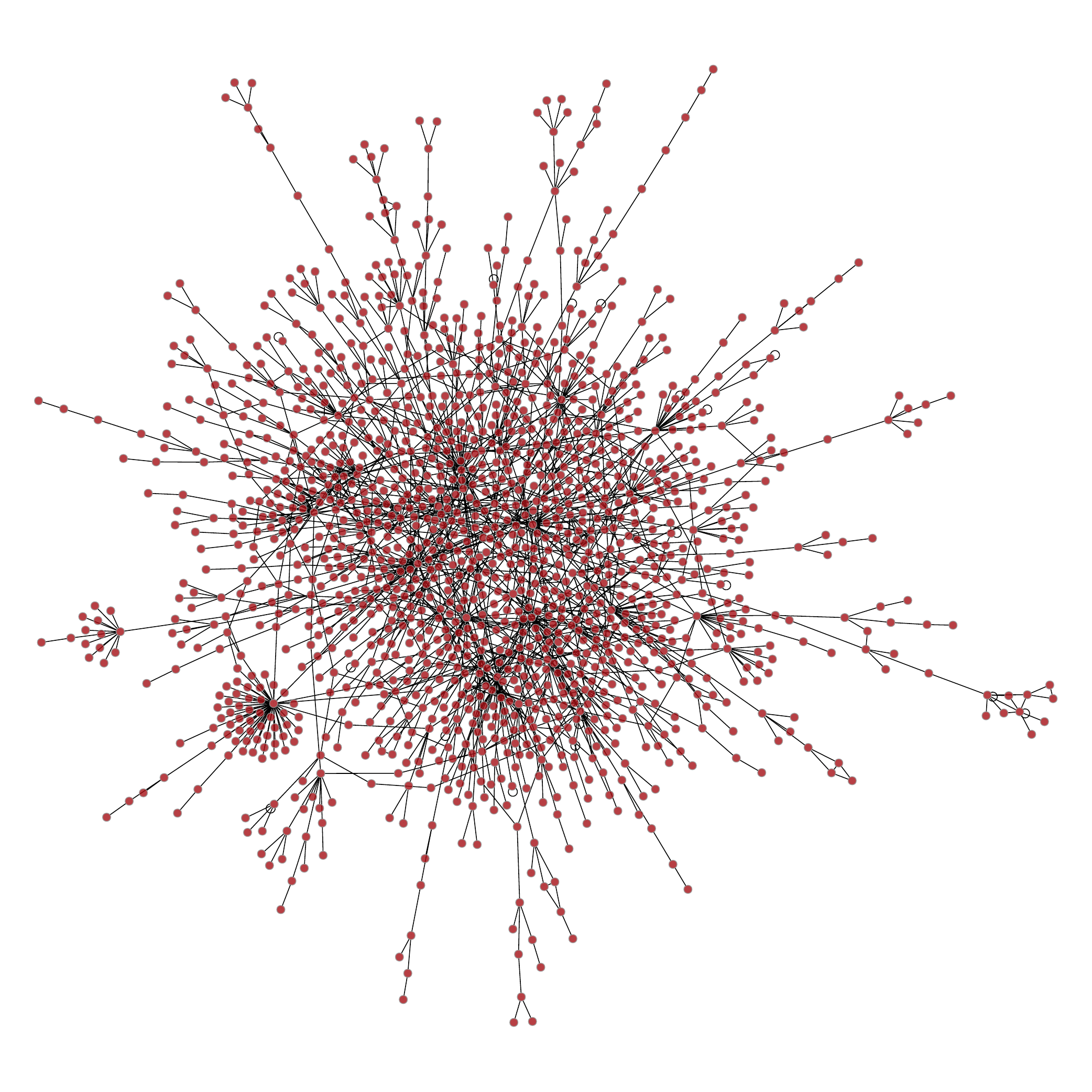}
\label{fig:protein_network}}
\hspace{.1cm}
\subfigure[]{\includegraphics[height=2.5cm]{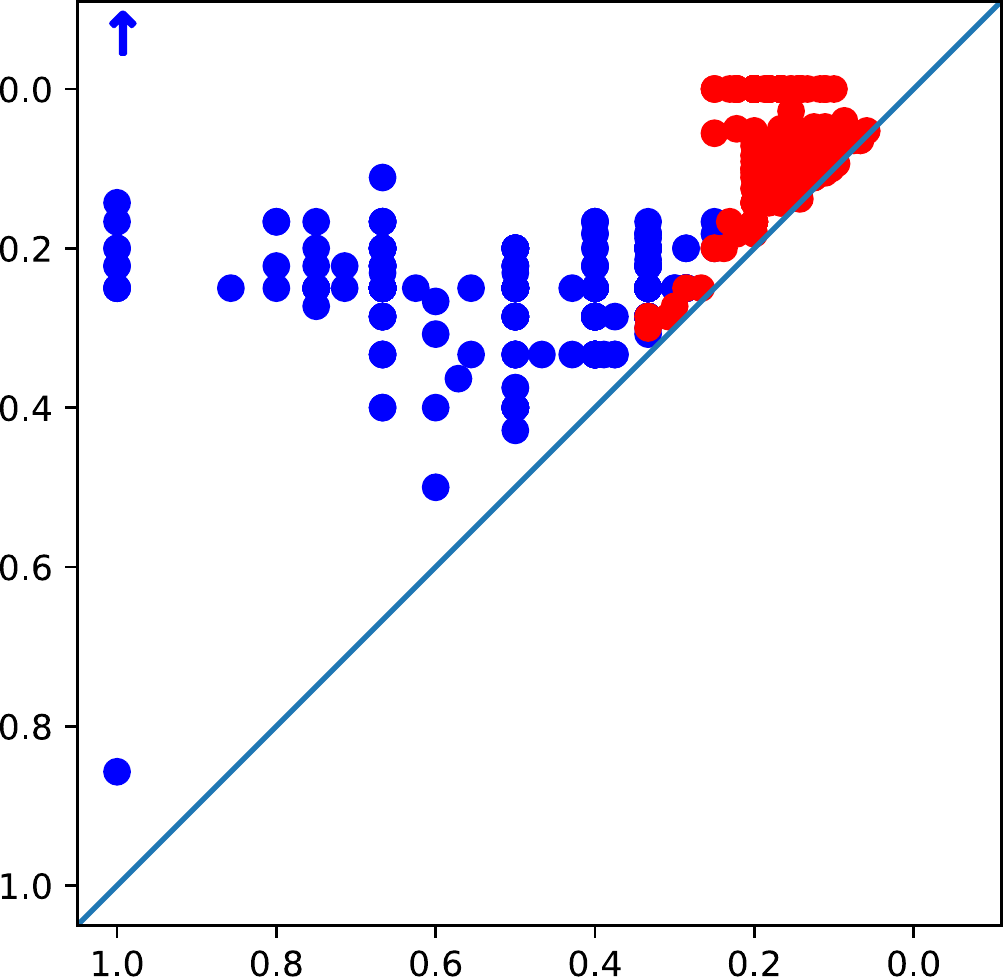}
\label{fig:protein_cliqueness}}
\hspace{.1cm}
\subfigure[]{\includegraphics[height=2.5cm]{images/dolphins_clique}
\label{fig:protein_clique}}
\hspace{.1cm}
\subfigure[]{\includegraphics[height=2.5cm]{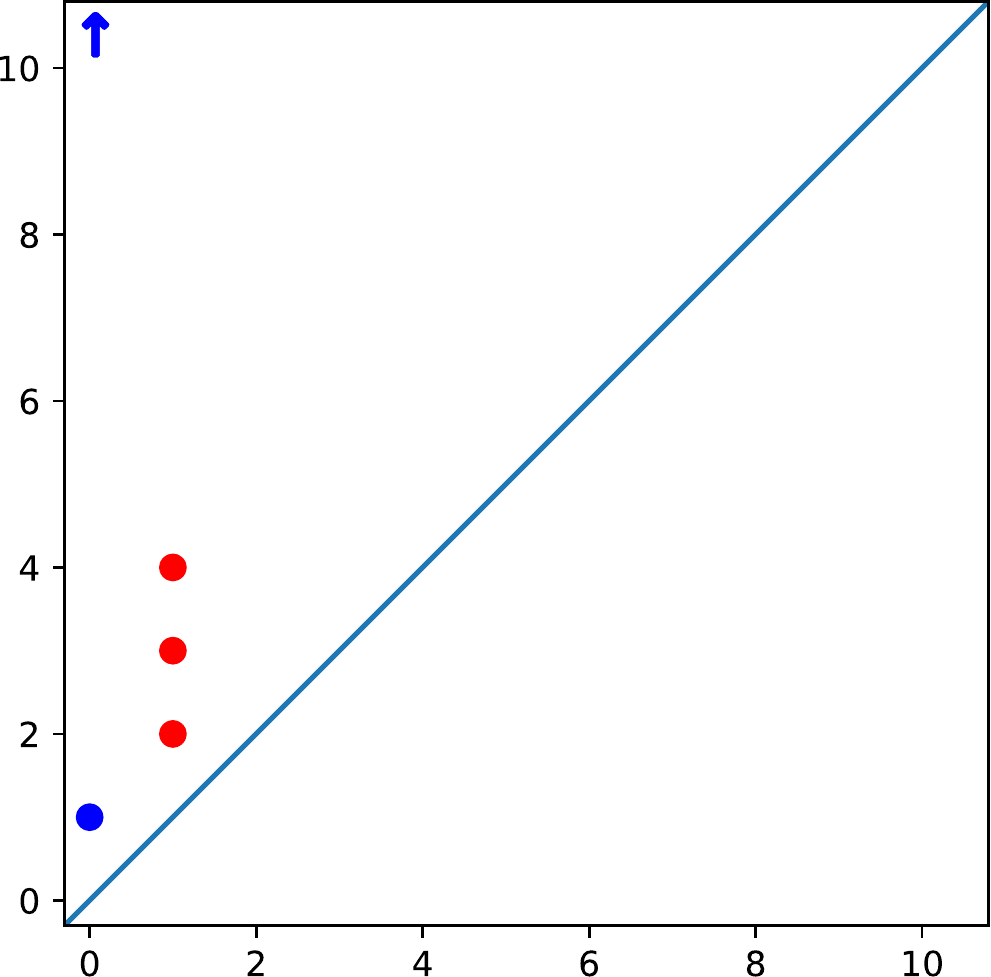}
\label{fig:protein_rips}}
\caption{The graphs in (a), (b) and (c) are the Zachary karate club social network, a social network of bottlenose dolphins and a protein interaction network respectively. The $0$- and $1$-dimensional persistent diagrams computed using the proposed method are displayed in (b), (f) and (j) respectively. The $0$- and $1$-dimensional persistent diagrams computed using a \textit{clique complex filtration} are displayed in (c), (g) and (k) respectively. The $0$- and $1$-dimensional persistent diagrams computed using a \textit{power complex filtration} are displayed in (d), (h) and (l) respectively.}
\label{fig:dolphins}
\end{center}
\end{figure}

\section{Conclusions}
\label{sec:conclusions}
This article proposes a novel method for topological graph analysis which is based on \textit{persistent homology}. This method possesses the properties of being stable and performing accurate discrimination and therefore can make accurate inferences regarding the topological features of a given graph. On the other hand, we find that existing topological graph analysis methods considered in this work do not possess these properties making it difficult for them to make such inferences.

The experimental evaluation presented in this article only considers a handful of random and real graphs. As described in the introduction to this article, topological graph analysis has many application domains including social network analysis \cite{carstens2013persistent} and computational neuroscience \cite{giusti2016two, chung2019exact}. Given the benefits of the proposed method relative to existing methods, there exists significant scope and potential for future research in the application of the proposed method to these and other domains.

\end{document}